\newcommand{\eio}{e^{i\omega}}
\newtheorem{thm}{Theorem}
\newtheorem{exm}{Example}
\newtheorem{lem}{Lemma}
\newtheorem{rem}{Remark}
\newtheorem{pro}{Proposition}
\newtheorem{defn}{Definition}
\newtheorem{coro}{Corollary}
\newtheorem{assu}{Assumption}
\newcommand{\vn}{\varnothing}
\begin{document}
	
	\title{\LARGE \bf
		Allocation of Excitation Signals for Generic Identifiability of Linear Dynamic Networks}

	\author{Xiaodong~Cheng,~\IEEEmembership{IEEE Member},
		Shengling Shi,~\IEEEmembership{IEEE Student Member,}\\
		and Paul M. J. Van den Hof,~\IEEEmembership{IEEE Fellow}
		\thanks{This work is supported by
			the European Research Council (ERC), Advanced Research Grant SYSDYNET, under the European Unions Horizon 2020 research and innovation programme (Grant Agreement No. 694504).}%
		\thanks{
			X. Cheng is with the Control Systems Group, Department of Electrical Engineering,
			Eindhoven University of Technology,
			5600 MB Eindhoven,
			The Netherlands. He is currently a research associate in the Department of Engineering at the University of Cambridge, CB2 1PZ, United Kingdom.
			{\tt\small xc336@cam.ac.uk}
			
			S. Shi and P. M. J. Van den Hof are with the Control Systems Group, Department of Electrical Engineering,
			Eindhoven University of Technology,
			5600 MB Eindhoven,
			The Netherlands.
			{\tt\small \{s.shi, p.m.j.vandenhof\}@tue.nl}}%
	}
	
	\maketitle

	\begin{abstract}
		
		A recent research direction in data-driven modeling
		is the identification of dynamic networks, in which measured vertex signals are interconnected by dynamic edges represented by causal linear transfer functions.
The major question addressed in this paper is where to allocate external excitation signals such that
a network model set becomes generically identifiable when measuring all vertex signals. To tackle this synthesis problem, a novel graph structure, referred to as \textit{directed pseudotree}, is introduced, and the generic identifiability of a   network model set can be featured by a set of disjoint directed pseudotrees that cover all the parameterized edges of an \textit{extended graph}, which includes the correlation structure of the process noises.
		Thereby, an algorithmic procedure is devised, aiming to decompose the extended graph into a minimal number of disjoint pseudotrees, whose roots then provide the appropriate locations for excitation signals.
		Furthermore, the proposed approach can be adapted using the notion of \textit{anti-pseudotrees} to solve a dual problem, that is to select a minimal number of measurement signals for generic identifiability of the overall network, under the assumption that all the vertices are excited.
	\end{abstract}

	\section{Introduction}
	
	Dynamic networks adequately describe a wide class of complex engineering systems appearing in various applications, including  multi-robot coordination, power grids, and biochemical networks  see \cite{mesbahi2010graph} for an overview.
	The conventional system identification methods mainly focus on systems with relatively simple dynamical structures, e.g., single-input-single-output (SISO), multiple-input-multiple-output (MIMO), open-loop or closed-loop systems \cite{ljung1999system}. 
	As control and design optimization for structured systems are resolved increasingly in a decentralized or distributed fashion, challenges arise in developing new data-driven modeling frameworks that address interconnection structures in network systems.
	
 	The interconnection structure of dynamic networks can not only capture the collective behavior of interacting dynamical subsystems but can also be used to represent causal dependencies among manifest signals \cite{chetty2015network}. Thereby, different representations of dynamic networks are considered. The first one focuses on interconnections of subsystems, see e.g., 
 	\cite{chetty2017necessary,suzuki2013node,haber2014subspace,henk2019systemtopology,Yu2018Subspace} and the references therein. 
	The second way is to consider signal structures. Specifically, the vertices in a network are interpreted as measured internal signals, and the directed edges represent transfer operators, referred to as \textit{modules}. Taking into account external noises and excitation signals, the identification of the modules in a network becomes a generalization of a closed-loop system identification problem \cite{paul2013netid}.

	With the latter description of dynamic networks, three research topics have been addressed. The first is to detect the topology of a network using measured internal signals, see e.g., \cite{materassi2010topology,materassi2012problem,hayden2016sparse,sanandaji2011acc,chiuso2012bayesian,shi2019bayesian,Zorzi&Chiuso:17}, where
	techniques, such as Wiener filters, compressed sensing or Bayesian approaches are taken to reconstruct the link structure among the process signals and obtain some sparse estimates.
	
	The second problem is to estimate a desired local module within a network. Various methods based on the \textit{prediction error method} can be found in e.g., \cite{dankers2016identification,ramaswamy2018local,gevers2015identification,Linder&Enqvist_ijc:17,gevers2018practical,Karthik2021TAC,shi2020excitation,shi2020single,shi2020generic}, which focus on the selection of predictor inputs: which signals are required to be measured such that we are able to consistently identify the dynamics of a particular module in the network?
	
	Relevant to the above question, the third problem, which is of particular interest in this paper, concerns the structural identifiability of a full dynamic network. Based on the results for deterministic network reconstruction problems in \cite{gonccalves2008LTInetworks,adebayo2012dynamical}, the concept of global network identifiability was introduced in an identification setting in \cite{Weerts&etal_IFAC:15,weerts2018identifiability}, as a property that reflects the ability to distinguish between network models in a parameterized model set on the basis of measurement data. 
%
%
%
 %
 In the literature, there are two classes of network identifiability, namely, \textit{global identifiability}  \cite{Weerts&etal_IFAC:15,weerts2018identifiability,henk2018identifiability} that requires models to be distinguishable from \textit{all} other models in the model set\footnote{There are actually two versions of global identifiability, reflecting whether either one particular model in the set can be distinguished or all models in the set \cite{weerts2018identifiability}.}, and \textit{generic identifiability}  \cite{bazanella2017identifiability,hendrickx2018identifiability,weerts2018single}, which means that models can be distinguished from \textit{almost all} models in the model set.
 Furthermore, 
The conditions for network identifiability have been analyzed within different settings. In e.g., \cite{bazanella2017identifiability,hendrickx2018identifiability,henk2018identifiability}, all vertices are excited by external excitation signals, while only a subset of vertices is measured. In contrast, the analysis in e.g., \cite{weerts2018identifiability,weerts2018single} assumes that all vertices are measured, while only a subset of vertices is excited. A recent contribution \cite{Bazanella&etal_CDC:19} also addresses the combined situation.

In all these settings, network identifiability is dependent on several structural properties of the model set, including the network topology, the modeled correlation structure of process noises, the presence and location of external excitation signals and the choices of measured vertex signals. Based on these properties, the existing results have provided both algebraic and graph-based analysis for network identifiability, that are typically formulated for each node separately and require a separate check of each and every node. However, none of them has referred to the synthesis problem, that is: where to
	allocate a limited number of excitation or measurement signals
		so as to achieve network identifiability for the full network. Actually, such a problem has more realistic significance in the identification of dynamic networks, since it actually determines the cost of identification experiments in networks.
	This becomes the motivation of the current study.
		We mainly focus on the situation that all the internal signals are measured, and we aim for a systematic scheme that allocates  the minimum number of excitation signals to achieve generic identifiability. To the best of our knowledge, such a synthesis problem has not been addressed in the literature so far.
	
	In this paper, the main objective is to present a novel graph-theoretic approach to both  the analysis and synthesis of dynamic networks. Although \cite{hendrickx2018identifiability,weerts2018single} have provided attractive path-based conditions for checking the generic identifiability, the validation has to be carried out for each vertex, limiting the potential of these conditions for the use in the synthesis problem, particularly when large-scale or complex-structured networks are considered. In contrast to the path-based conditions, this paper introduces a novel graph structure, called \textit{directed pseudotrees}, and provides a different condition for guaranteeing generic identifiability of a full network using the concept of \textit{disjoint pseudotree covering}. More specifically, we define an \textit{extended graph}, which integrates the interconnection structure of the original network and the correlation structure of process noises. Then, the identifiability is characterized by a set of (edge) disjoint directed pseudotrees that cover all the parameterized edges of the extended graph, while each of the pseudotrees has a single external excitation.
	
	Based on this characterization, we find that the minimal number of excitation signals required for the identifiability is upper-bounded by the cardinality of the covering. Thereby, an effective heuristic algorithm is designed to decompose the extended graph into a minimal number of disjoint pseudotrees, whose roots, in fact, provide potential locations for excitation signals. The main ingredient of this algorithm is the concept of \textit{characteristic matrix}, which features all the pairs of mergeable pseudotrees in a covering. The graph merging steps are then completely carried out by using specific algebraic operations on the characteristic matrix. As a crucial follow-up step, we further check the necessity of stimulating one root of each pseudotree in the resulting covering. If it does not change the generic identifiability of the full network by excluding a pseudotree to have an excitation, we then reduce the required number of excitation signals.
	The current paper significantly improves the preliminary results in \cite{cheng2019CDC}, where the identifiability condition is only sufficient. Moreover, this paper considers a more general model setting, which allows for correlated noises and possible a priori known non-parameterized modules.

	The rest of this paper is organized as follows: In Section~\ref{sec:preliminaries}, we recapitulate some basic terminologies and notations in graph theory and provide the linear dynamic network model used in this paper. The definition of network identifiability is given in Section~\ref{sec:identifiability}, and  Section~\ref{sec:pseudotree} then defines a new graph structure, referred to as pseudotrees, and relevant concepts including disjoint pseudotrees and edge covering are introduced. In Section~\ref{sec:result}, we present a generic identifiability condition based on disjoint pseudotrees and then propose a pseudotree merging approach for the allocation of excitation signals in Section~\ref{sec:dual}. Finally, concluding remarks are made in Section~\ref{sec:conclusion}.
	
	\textit{Notation:}
	Denote $\mathbb{R}$ as the set of real numbers, and $\mathbb{R}(z)$ is the rational function field over $\mathbb{R}$ with variable $z$. $v_i$ denotes the $i$-th element of a vector $v$, and $A_{ij}$ denotes the $(i,j)$-th entry of a matrix $A$.
	The cardinality of a set $\mathcal{V}$ is given by $\lvert \mathcal{V} \rvert$.
	Let $\mathcal{G}$ be a directed graph, and we denote $V(\mathcal{G})$ and $E(\mathcal{G})$ as the vertex set and edge set of $\mathcal{G}$, respectively. The union of two graphs $\mathcal{G}_1$ and $\mathcal{G}_2$ is denoted by $\mathcal{G}:= \mathcal G_1 \cup \mathcal G_2$, where $V(\mathcal{G}) = V(\mathcal{G}_1) \cup V(\mathcal{G}_2) $ and $E(\mathcal{G}) = E(\mathcal{G}_1) \cup E(\mathcal{G}_2)$.
	
	
	\section{Preliminaries and Problem Setting}
	\label{sec:preliminaries}
	\subsection{Graph theory}
	\label{sec:preliminaries:graph}
	
	We provide necessary terminologies and concepts from graph theory and refer to \cite{mesbahi2010graph,godsil2013algebraic} for more details.
	The topology of a dynamic network is characterized by a graph $\mathcal{G}$ that consists of a finite and nonempty vertex set  $\mathcal{V}: = \{1, 2, ... , L\}$ and an edge set $\mathcal{E} \subseteq \mathcal{V} \times \mathcal{V}$.
	A directed graph is such that each element in $\mathcal{E}$ is an ordered pair of elements of $\mathcal{V}$. If $(i,j) \in \mathcal{E}$, we say that the edge is incident from vertex $i$ to vertex $j$, and the vertex $i$ is the \textit{in-neighbor} of $j$, and $j$ is the \textit{out-neighbor} of $i$. Let $\mathcal{N}_j^-$ and $\mathcal{N}_j^+$ be the sets that collect all the in-neighbors and out-neighbors of vertex $j$, respectively.
	
	A graph $\mathcal{G}$ is called \textit{simple}, if $\mathcal{G}$ does not contain self-loops (i.e., $\mathcal{E}$ does not contain any edge of the form $(i,i)$, $\forall~i\in \mathcal{V}$), and there exists only one directed edge from one vertex to each of its out-neighbors. In a simple graph, a directed \textit{path} connecting vertices
	$i_0$ and $i_n$ is a sequence of  edges of the form $(i_{k-1}, i_k)$, $k = 1, ..., n$, and every vertex appears at most once on the path. Two directed paths are \textit{vertex-disjoint} if they do not share any common vertex, including the start and the end vertices. In a simple directed graph $\mathcal{G}$, we denote $b_{\mathcal{U} \rightarrow \mathcal{Y}}$ as the maximum number of mutually vertex-disjoint
	paths from $\mathcal{U} \subseteq \mathcal{V}$ to $\mathcal{Y} \subseteq \mathcal{V}$. A directed simple graph $\mathcal{G}$ is \textit{connected} if the underlying undirected graph $\mathcal{G}_\mathrm{u}$ obtained by replacing all directed edges of $\mathcal{G}$ with undirected edges is connected, i.e., in $\mathcal{G}_\mathrm{u}$, there is an undirected path between any pair of vertices.
	
	In a simple connected graph $\mathcal{G}$, a source is a vertex without any in-neighbors, and likewise, a sink is a vertex without any out-neighbors. The sources and sinks of $\mathcal{G}$ are collected by 
	$
	{\mathcal S_\mathrm{ou}}(\mathcal{G}): = \left\{j \in V(\mathcal{G}) \mid |\mathcal{N}_j^-| = 0 \right\},
	$
	$  
	{\mathcal{S}_\mathrm{in}}(\mathcal{G}): = \left\{j \in V(\mathcal{G}) \mid |\mathcal{N}_j^+| = 0\right\}.
	$, respectively.

	\subsection{Dynamic Network Model}
	\label{sec:dynamodel}
	Consider a dynamic network whose topology is captured by a simple directed graph $\mathcal{G} = (\mathcal{V}, \mathcal{E})$ with vertex set
	$\mathcal V = \{1, 2, ... , L\}$ and edge set $\mathcal E \subseteq \mathcal V \times \mathcal V$. Following the basic setup of \cite{paul2013netid,weerts2018identifiability}, the dynamics of the $j$-th vertex in $\mathcal{G}$ is described by an internal variable $w_j(t) \in \mathbb{R}$ as
	\begin{equation} \label{eq:singlenode}
	w_j(t) = \sum_{l \in \mathcal{N}^-_j}^{} G_{jl}(q) w_l (t) + \sum_{k=1}^{K} R_{jk}(q)  r_k(t) + v_j(t),
	\end{equation}
	where $q^{-1}$ is the delay operator, i.e. $q^{-1} w_{j}(t) = w_j(t-1)$. $G_{jl}(q) \in \mathbb{R}(q)$ is referred to as a \textit{module} of the network, and $G_{jl}(q)$ is nonzero only if the edge $(l,j) \in \mathcal{E}$. Note that $G_{jj} = 0$, for all $j \in \mathcal{V}$, due to the simpleness of $\mathcal{G}$. The signals $r_k(t) \in \mathbb{R}$, with $k = 1, 2, ..., K$, are the external excitations that can directly be manipulated by users.
	Denote $\mathcal{R} \subseteq \mathcal{V}$, with $|\mathcal{R}| = K$, as the set of vertices that are affected by the external excitation signals, thereby $R_{jk}(q) \in \mathbb{R}(q)$ is nonzero if the vertex $j \in \mathcal{R}$ is excited by $r_k(t)$, and $R_{jk}(q) = 0$ otherwise. Moreover, $v_j(t) \in \mathbb{R}$ is the unmeasured disturbance injected into the $j$-th node.
	
	A compact form for expressing the dynamics of the network is obtained as
	\begin{equation} \label{eq:net}
	w(t) = G(q) w(t) +  R(q)  r(t) + v(t),
	\end{equation}
	where $G(q)$, $R(q)$ are the transfer matrices that collect $G_{jl}(q)$ and $R_{jk}(q)$ in \eqref{eq:singlenode} as their corresponding entries, respectively. $w(t): = \begin{bmatrix}
	w_1(t) & w_2(t)& ... & w_L(t)
	\end{bmatrix}^\top$,
	$r(t) : = \begin{bmatrix} r_1(t) & r_2(t) & ... & r_K(t)\end{bmatrix}^\top$,
	and $v(t) : = \begin{bmatrix} v_1(t)& v_2(t) & ... & v_L(t)\end{bmatrix}^\top$. For the identifiability analysis in this paper, the signals $w(t)$ and $r(t)$ are assumed to be known.
	
	\begin{assu} \label{Assum}
		Throughout the paper, we consider a dynamic network \eqref{eq:net} with the following properties.
		\begin{enumerate}
			
			\item The network \eqref{eq:net} is \textit{well-posed} and stable, i.e., $(I - G(q))^{-1}$ is proper
			and stable.
			
			\item All the entries of $G(q)$ and $R(q)$ are proper and stable transfer operators, and each row of $R(q)$ contains only one nonzero entry, i.e., each vertex in $\mathcal{R}$ is influenced by a single excitation signal.
			\item $v(t)$ is modeled as a stationary stochastic process with a rational spectral density:
			\begin{equation} \label{eq:noisemodel}
			v(t) = H(q)e(t),
			\end{equation}
			where $e(t):= \begin{bmatrix}e_1(t) & e_2(t) &... & e_p(t)\end{bmatrix}^\top$ is a
			white noise process, with dimension $p \leq L$ and the covariance matrix $\Lambda > 0 $.
			In the case of $p = L$, $H(q)$ is a proper rational transfer matrix which is monic, stable and minimum-phase. For the situation $p < L$, i.e., rank-reduced noises, 
 {$H(q)$ is structured as $H(q) = \begin{bmatrix} H_a \\ H_b \end{bmatrix}$, with $H_a$ square, proper, monic, stable and minimum phase}, see \cite{weerts2018identifiability} for more details. \hfill $\Box$
		\end{enumerate}
	\end{assu}
	
The above are standard assumptions made for dynamic networks to ensure the properness and stability of the mapping from $r(t)$ to $w(t)$ and of the noise model, which are essential for the identifiability analysis, see  \cite{weerts2018identifiability,weerts2018single} for more details. 


\subsection{Generic Identifiability}
\label{sec:identifiability}
	
	In order to define network identifiability, a network model and a network model
	set are specified. Consider a dynamic network as in \eqref{eq:net} of $L$ internal signals, $K$ external excitation signals, and a noise process of rank $p \leq L$. Following \cite{weerts2018identifiability}, a network model is defined by the quadruple
	\begin{equation} \label{eq:networkmodel}
		M = (G, R, H, \Lambda),
	\end{equation}
	where $G \in \mathbb{R}(z)^{L \times L}$, $R \in \mathbb{R}(z)^{L \times K}$, $H \in \mathbb{R}(z)^{L \times p}$ are \textit{proper} transfer matrices satisfying the properties in Assumption~\ref{Assum}, and $\Lambda \in \mathbb{R}^{p \times p}$ is the \textit{positive definite} noise covariance matrix.
	We then denote a set of parameterized matrix-valued functions
	\begin{equation} \label{eq:modset}
	\mathcal{M}: = \{M(q,\theta) = (G(q,\theta),R(q),H(q,\theta), \Lambda(\theta)), \theta \in \Theta\}
	\end{equation}
	as the network model
	set with all network models $M(\theta)$
	described in \eqref{eq:networkmodel}. The network model set $\mathcal{M}$ represents prior knowledge of the dynamic network including the topology, non-parameterized modules, presence, disturbance correlation, and locations of external signals. All the entries of $R(q)$ are known and thus non-parameterized.
	Note that the variable $\theta \in \Theta$ in \eqref{eq:modset} is only used for formalizing a set of models, while the properties of the mapping from $\theta$ to network models will not be addressed.

Denote the transfer matrix
\begin{equation} \label{eq:T}
T(q, \theta) = \begin{bmatrix}
T_{wr}(q, \theta) & T_{we}(q, \theta)
\end{bmatrix},
\end{equation}
where $T_{wr}(q, \theta): = (I - G(q,\theta))^{-1} 
R$ and  $T_{we}(q, \theta): = (I - G(q,\theta))^{-1} H(q,\theta)$, and we denote the signal
$\tilde v(t,\theta)$ as the disturbance signal with power spectrum $\Phi_{\bar v}(\omega,\theta) = T_{we}(\eio,\theta)\Lambda(\theta)T_{we}(\eio,\theta)^{-\star}$. In our identification setting, $w(t)$ and $r(t)$ are the measurement data, from which we can uniquely identify the transfer matrix $T_{wr}$ and the power spectrum   $\Phi_{\bar{v}}$, provided that we have sufficiently excitating signals $r$. Then, the concept of identifiability specifies whether there is a unique representation of a network model in the model set $\mathcal{M}$ that matches the objects $T_{wr}$ and $\Phi_{\tilde v}$. In the next definition we extend the formulation of global network identifiability as introduced in \cite{weerts2018identifiability} with the principle of genericity that was introduced in \cite{bazanella2017identifiability,hendrickx2018identifiability} for generic identifiability, but applied to a slightly different notion of identifiability.
\begin{defn}[Network identifiability]
		 \label{def:netid}
	Consider a network model set $\mathcal{M}$, and a model $M(q,\theta_0) \in \mathcal{M}$  for which we consider the following implication:
\begin{align}\label{eq:implication0}
	\left.\begin{aligned}
		T_{wr}(q, \theta_0) &= T_{wr}(q, \theta_1) \\
		\Phi_{\bar{v}}(\omega, \theta_0) &= \Phi_{\bar{v}}(\omega, \theta_1) 
	\end{aligned}\right\rbrace 
	\Rightarrow
  M(q,\theta_1) = M(q,\theta_0),  
	\end{align}
for all  $\theta_1 \in\Theta$.
Then $\mathcal{M}$ is 
\begin{itemize}
    \item[a.] globally network identifiable from $(r,w)$ if implication \eqref{eq:implication0} holds for all  $\theta_0 \in \Theta$;
    \item[b.] generically network identifiable from $(r,w)$ if implication \eqref{eq:implication0} holds for almost all\footnote{ ``Almost all'' refers to the exclusion of parameters that are in a subset of the finite set $\Theta$ with Lebesgue measure $0$.  When the parameter space $\Theta$ is of infinite dimension, we consider the concept of generic properties in a topological space \cite{tchon1983generic} applied to the space of models instead, from which a more rigorous definition of generic identifiability is introduced, see \cite{shi2020genericity} for the details.  }  $\theta_0 \in \Theta$. \hfill $\Box$
\end{itemize}
\end{defn}

In order to support the analysis and verification of network identifiability, we add the following step, that further simplifies the implication \eqref{eq:implication0}.  
\begin{lem}\cite{weerts2018identifiability}\label{lem1}
If model set $\mathcal{M}$ satisfies the condition that
 \begin{itemize}
 \item either all modules $G(q,\theta)$ are parameterized to be strictly proper, or
 \item the parameterized network model does not contain any algebraic loops\footnote{There exists an algebraic loop around node $w_{n_1}$ if there exists a sequence of integers $n_1,... n_k$ such that $G^{\infty}_{n_1n_2}G^{\infty}_{n_2n_3}... G^{\infty}_{n_kn_1} \neq 0$, with $G^{\infty}_{n_1n_2}:=\lim_{z\rightarrow\infty} G_{n_1n_2}(z)$.}, and  $H^{\infty}(\theta)\Lambda(\theta)H^{\infty}(\theta)^T$  is diagonal for all $\theta\in\Theta$,  with $H^\infty(\theta):=\lim_{z\rightarrow\infty}H(z,\theta)$, 
\end{itemize}
then implication \eqref{eq:implication0} can equivalently be formulated as 
\begin{align}\label{eq:implication}
	T(q, \theta_1) = T(q, \theta_0)
	\Rightarrow \left\{ \begin{array}{c}
	G(q,\theta_1) = G(q,\theta_0), \\ H(q,\theta_1)=H(q,\theta_0), \\ \end{array} \right.  
	\end{align} 
for all $\theta_1 \in\Theta$. 
\end{lem}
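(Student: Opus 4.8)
The plan is to reduce both sides of the asserted equivalence to statements about the transfer matrix $T$ and the covariance $\Lambda$, with the reduction on the noise side carried out through a spectral-factorization argument. Observe first that $T_{wr}(q,\theta)$ is literally the first block of $T(q,\theta)$, so matching $T_{wr}$ is already contained in matching $T$; the whole difficulty is concentrated in translating a match of the noise spectrum $\Phi_{\bar v}(\omega,\theta)=T_{we}(\eio,\theta)\Lambda(\theta)T_{we}(\eio,\theta)^\star$ into a match of the factor $T_{we}(q,\theta)$ together with $\Lambda(\theta)$. On the consequent side, since $R(q)$ is known and non-parameterized, the equality $M(q,\theta_1)=M(q,\theta_0)$ is equivalent to the three simultaneous equalities $G(\theta_1)=G(\theta_0)$, $H(\theta_1)=H(\theta_0)$ and $\Lambda(\theta_1)=\Lambda(\theta_0)$.

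The core claim I would establish is that $\Phi_{\bar v}(\omega,\theta_1)=\Phi_{\bar v}(\omega,\theta_0)$ holds if and only if both $T_{we}(q,\theta_1)=T_{we}(q,\theta_0)$ and $\Lambda(\theta_1)=\Lambda(\theta_0)$. The nontrivial implication is ``$\Rightarrow$'', and it rests on uniqueness of spectral factorization. By Assumption~\ref{Assum}, $(I-G)^{-1}$ and $H$ are proper and stable and $H$ is minimum phase, so $T_{we}=(I-G)^{-1}H$ is a stable, minimum-phase spectral factor; hence from $T_{we}(\theta_0)\Lambda(\theta_0)T_{we}(\theta_0)^\star=T_{we}(\theta_1)\Lambda(\theta_1)T_{we}(\theta_1)^\star$ the quotient $U:=T_{we}(\theta_1)^{-1}T_{we}(\theta_0)$ is stable and minimum phase and satisfies $U(\eio)\Lambda(\theta_0)U(\eio)^\star=\Lambda(\theta_1)$. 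A stable, minimum-phase, para-unitary transfer matrix must be constant, so $U$ is a constant matrix with $U\Lambda(\theta_0)U^\top=\Lambda(\theta_1)$. It then remains to show that each hypothesis of the lemma forces $U=I$, which yields $\Lambda(\theta_1)=\Lambda(\theta_0)$ and $T_{we}(\theta_1)=T_{we}(\theta_0)$.

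This is where the two cases split, and it is the step I expect to be the main obstacle. Under the first hypothesis $G$ is strictly proper, so $G^\infty=0$ and $T_{we}^\infty=H^\infty=I$; thus $T_{we}$ is monic, $U^\infty=(T_{we}^\infty(\theta_1))^{-1}T_{we}^\infty(\theta_0)=I$, and a monic constant para-unitary matrix is the identity. Under the second hypothesis I would use that the absence of algebraic loops makes the feedthrough graph a DAG, so after a topological reordering $I-G^\infty$ is unit lower triangular and, since $H^\infty=I$, so is $T_{we}^\infty=(I-G^\infty)^{-1}$; the instantaneous covariance read off from the factorization is $T_{we}^\infty\Lambda (T_{we}^\infty)^\top=(I-G^\infty)^{-1}\bigl(H^\infty\Lambda (H^\infty)^\top\bigr)(I-G^\infty)^{-\top}$, and the diagonality of $H^\infty\Lambda (H^\infty)^\top$ makes this the $LDL^\top$ (Cholesky) decomposition of a positive-definite matrix. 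Uniqueness of that decomposition recovers both the unit-triangular factor $T_{we}^\infty$ and the diagonal $\Lambda$, again forcing $U=I$. The delicate points here are checking that the triangular normalization survives the common reordering and handling the rank-reduced case $p<L$, where $T_{we}$ is non-square and one must invoke singular spectral factorization together with the structured $H=[H_a^\top\ H_b^\top]^\top$.

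Finally I would assemble the logical statement. Using the core claim, the antecedent of \eqref{eq:implication0} becomes ``$T(\theta_1)=T(\theta_0)$ and $\Lambda(\theta_1)=\Lambda(\theta_0)$'', while its consequent becomes ``$G$, $H$ and $\Lambda$ match''. Since $\Lambda(\theta_1)=\Lambda(\theta_0)$ now appears in both antecedent and consequent it cancels, leaving exactly ``$T(\theta_1)=T(\theta_0)\Rightarrow G,H$ match'', which is \eqref{eq:implication}. The only remaining subtlety is dropping $\Lambda(\theta_1)=\Lambda(\theta_0)$ from the antecedent: this uses that $\Lambda$ enters the model set as a free positive-definite parameter, so that for any $\theta_1$ with $T(\theta_1)=T(\theta_0)$ one may replace $\Lambda(\theta_1)$ by $\Lambda(\theta_0)$ without leaving $\Theta$ and without altering $G,H,R$ (hence $T$); applying the reduced implication to this replacement returns the equality of $G$ and $H$ for the original $\theta_1$.
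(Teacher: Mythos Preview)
The paper does not prove this lemma at all: it is quoted from \cite{weerts2018identifiability}, and the only justification the paper supplies is the single sentence following the statement, namely that the content of the result is to give conditions under which $T_{we}$ can be uniquely recovered from the spectrum $\Phi_{\bar v}$. Your spectral-factorization route is precisely that argument, and your treatment of the two hypotheses---monic normalization of $T_{we}$ when $G$ is strictly proper, and uniqueness of the $LDL^\top$ factorization of the feedthrough covariance under the no-algebraic-loop plus diagonal $H^\infty\Lambda(H^\infty)^\top$ condition---is the standard mechanism and is correct in the full-rank case. You have also correctly isolated the rank-reduced case $p<L$ as the place where extra care (singular spectral factorization together with the structured $H$) is needed.

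There is one point in your final paragraph that is not quite justified by the paper's standing assumptions. To pass from the reformulated implication ``$T$ match and $\Lambda$ match $\Rightarrow G,H,\Lambda$ match'' to the bare implication~\eqref{eq:implication}, you replace $\theta_1$ by some $\theta_1'$ having the same $(G,H)$ but with $\Lambda(\theta_1')=\Lambda(\theta_0)$. This presumes that $\Lambda$ is parameterized independently of $(G,H)$ inside $\Theta$, which neither Assumption~\ref{Assum} nor the definition of $\mathcal{M}$ in \eqref{eq:modset} states; the parameterization there is abstract. In the cited reference this independence is part of the model-set setup, so you should flag it as an additional structural assumption rather than assert it as automatic. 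With that caveat, your argument is complete and aligns with what the paper invokes by citation.
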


The basic step that is made in Lemma \ref{lem1} is to formulate conditions under which the transfer function $T_{we}$ can be uniquely recovered from the spectrum $\Phi_{\bar v}$, and thus the full matrix $T$ in \eqref{eq:T} can be obtained from measurement data $w(t)$ and $r(t)$.
Throughout this paper, we will assume that the considered model sets $\mathcal{M}$ will satisfy the conditions of Lemma~\ref{lem1} and so that we can use implication \eqref{eq:implication} for verifying network identifiabiltiy according to Definition \ref{def:netid}.

In the next step, implication (\ref{eq:implication}) is reformulated in terms of a condition on a particular matrix rank. For this step we need the following assumption that originates from \cite{weerts2018identifiability}.

\begin{assu} \label{ass2}
Consider the following two conditions on network model set $\mathcal{M}$ in \eqref{eq:modset}:
\begin{enumerate}
		\item[a)] Every parameterized entry in $\{G(q,\theta), H(q,\theta)\}$
		covers all proper rational transfer functions\footnote{within the constraints of the conditions of Lemma \ref{lem1}.};
		\item[b)] All parameterized transfer functions $\{G(q,\theta), H(q,\theta)\}$ are
		parameterized independently. \hfill $\Box$
	\end{enumerate}
\end{assu}

In order to formulate the rank condition for satisfying implication (\ref{eq:implication}), we denote two important sets of signals:
\begin{align*}
&\mathcal{P}_j :=  \{ i\in \mathcal{N}_j^- \subset \mathcal{V}\ |\ G_{ji}(\theta)\ \mbox{is parameterized in } \mathcal{M}\}, \\
&\mathcal{U}_j :=  \mathcal{R} \cup \{e_{\ell}\ | H_{j\ell}(q) \mbox{ is non-parameterized in } \mathcal{M}\},
\end{align*}
and we define the transfer matrix $\breve T_j(\theta)$ as the transfer matrix from $\mathcal{U}_j \rightarrow \mathcal{P}_j$ for models in the model set $\mathcal{M}$.
%
%
%
Now, in line with the step made in \cite{weerts2018single}, based on the introduction of genericity in the concept of identifiability according to \cite{bazanella2017identifiability,hendrickx2018identifiability}, we can formulate the following result for generic identifiability of $\mathcal{M}$:
\begin{pro} \label{pro1}
Let model set $\mathcal{M}$ satisfy the conditions of Lemma \ref{lem1}. If for each $j = 1,\ldots, L$ it holds that
\[ \breve T_j(\theta_0)\ \  \mbox{has full row rank for almost all } \theta_0\in\Theta\]
then $\mathcal{M}$ is generically identifiable from $(r,w)$. 
If $\mathcal{M}$ satisfies Assumption \ref{ass2} then the condition is also necessary. 
\end{pro}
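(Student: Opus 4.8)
The plan is to work throughout with the reformulated implication \eqref{eq:implication}, which is legitimate since $\mathcal{M}$ satisfies the conditions of Lemma~\ref{lem1}, and to exploit the identity obtained by rearranging $T=(I-G)^{-1}[\,R\ \ H\,]$, namely $(I-G(\theta))\,T(\theta)=[\,R\ \ H(\theta)\,]$. Read row by row, this gives for each vertex $j$ the relation $T_{j\bullet}-G_{j\bullet}T=[\,R_{j\bullet}\ \ H_{j\bullet}\,]$, where the columns of $T$ are indexed by the excitations $\mathcal{R}$ and the noise channels $e_1,\dots,e_p$. First I would fix $\theta_0$ and a competitor $\theta_1$ with $T(\theta_1)=T(\theta_0)=:T$ and subtract the two row-$j$ identities. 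Because $R$ is non-parameterized and $T_{j\bullet}$ is common, the difference collapses to $\Delta G_{j\bullet}\,T=-[\,0\ \ \Delta H_{j\bullet}\,]$, with $\Delta G_{j\bullet}:=G_{j\bullet}(\theta_1)-G_{j\bullet}(\theta_0)$ supported on $\mathcal{P}_j$ and $\Delta H_{j\bullet}$ supported on the parameterized noise entries of row $j$.

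The decisive step is to restrict this identity to the columns indexed by $\mathcal{U}_j$. On the $\mathcal{R}$-columns the right-hand side vanishes identically, and on the non-parameterized noise columns $e_\ell\in\mathcal{U}_j$ it vanishes because $\Delta H_{j\ell}=0$ there; hence the restriction reads precisely $\Delta G_{j,\mathcal{P}_j}\,\breve T_j=0$, as $\breve T_j$ is exactly the submatrix $T_{\mathcal{P}_j,\mathcal{U}_j}$. For sufficiency, if $\breve T_j(\theta_0)$ has full row rank over $\mathbb{R}(z)$, then (noting $\breve T_j(\theta_1)=\breve T_j(\theta_0)$ since $T$ is shared) the only left annihilator is $\Delta G_{j,\mathcal{P}_j}=0$, giving $G_{j\bullet}(\theta_1)=G_{j\bullet}(\theta_0)$; substituting this back into $\Delta G_{j\bullet}T=-[\,0\ \ \Delta H_{j\bullet}\,]$ forces $\Delta H_{j\bullet}=0$ as well. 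Collecting over all $j$ yields \eqref{eq:implication}. The genericity is then routine bookkeeping: the set on which all $\breve T_j$ are simultaneously full row rank is the complement of a finite union of minor-vanishing (measure-zero) sets, so the implication holds for almost all $\theta_0$.

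For the converse under Assumption~\ref{ass2} I would argue contrapositively. If $\breve T_j(\theta_0)$ drops row rank on a non-null set of $\theta_0$, choose a nonzero rational left-null row $a$ with $a\,\breve T_j=0$, scaled by a scalar rational factor so that the perturbation is proper and stable within the Lemma~\ref{lem1} constraints. Set $\Delta G_{j,\mathcal{P}_j}=a$ and, on each parameterized noise column $e_\ell\notin\mathcal{U}_j$, define the compensating term $\Delta H_{j\ell}=-\Delta G_{j\bullet}\,T_{\bullet,e_\ell}$. By construction the full row-$j$ identity $\Delta G_{j\bullet}T=-[\,0\ \ \Delta H_{j\bullet}\,]$ holds on every column while all other rows are left untouched, so the modified pair $(G(\theta_1),H(\theta_1))$ reproduces the same $T$ yet $G(\theta_1)\neq G(\theta_0)$, violating \eqref{eq:implication} on a non-null set. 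Assumption~\ref{ass2}(a)–(b) — full coverage and independent parameterization of the entries — is exactly what guarantees that these entrywise perturbations of $G_{j,\mathcal{P}_j}$ and of the parameterized $H_{j\ell}$ land in $\Theta$.

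The main obstacle I anticipate lies in the necessity construction rather than the sufficiency algebra: one must check that the left-null vector is realizable as an admissible perturbation — proper, stable, and compatible with the monic/minimum-phase structure of $H$ (including the rank-reduced case $p<L$ with $H=\begin{bmatrix}H_a\\ H_b\end{bmatrix}$) and with the algebraic-loop/strict-properness hypotheses of Lemma~\ref{lem1} — and that $I-G(\theta_1)$ remains invertible, so that $T(\theta_1)$ is genuinely well-defined and equal to $T(\theta_0)$. Keeping the perturbation inside the model set while preserving $T$ exactly is precisely where the coverage and independence assumptions have to be invoked with care.
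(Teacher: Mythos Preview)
Your argument is correct, but note that the paper does not actually give a proof of Proposition~\ref{pro1}: it merely states that the result follows directly from Theorem~2 of \cite{weerts2018identifiability} together with the genericity clause in Definition~\ref{def:netid}(b). What you have written is essentially a self-contained reconstruction of the row-by-row analysis underlying that cited theorem, carried over to the ``almost all $\theta_0$'' setting. In that sense you are not taking a different route from the paper; you are filling in the route the paper deferred to the reference.

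On substance: the sufficiency part via $(I-G)T=[\,R\ \ H\,]$, row subtraction, and restriction to the $\mathcal{U}_j$-columns is exactly the standard mechanism and is cleanly executed. For necessity, your contrapositive construction with a left-null row $a$ of $\breve T_j$ and the compensating $\Delta H_{j\ell}$ on the parameterized noise columns is also the standard device; Assumption~\ref{ass2} is invoked precisely to guarantee that such entrywise perturbations remain inside $\Theta$. The feasibility issues you flag---properness/stability of the perturbation, preservation of the monic/minimum-phase structure of $H$ (including the rank-reduced case), absence of algebraic loops, and invertibility of $I-G(\theta_1)$---are real and are handled in \cite{weerts2018identifiability} by scaling $a$ with a sufficiently small strictly proper stable factor; you have identified the right obstacles and the right remedy.
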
 
This is a direct result of Theorem 2 in \cite{weerts2018identifiability} and the definition of generic identifiability in Definition~\ref{def:netid} part b.

Based on the model setting in Section~\ref{sec:dynamodel}, this paper mainly addresses a synthesis problem in dynamic networks to achieve generic identifiability. Specifically, we are interested in allocating a minimal number of external excitation signals, i.e., find the set $\mathcal{R}$ of minimal cardinality, such that
network models in a model set can be distinguished on the basis of the measurement data $w(t)$ and the presence and location of external excitation signals $r(t)$ and noise disturbances $v(t)$.

\section{Generic Identifiability Based on Extended Graphs}

In this section, we introduce the concept of extended graphs for dynamic networks. An extended graph, which incorporates the underlying graph of a network and its structure of noise correlation, then leads to a path-based condition for checking generic identifiability.

The condition in Proposition~\ref{pro1} reflects for every vertex in the network, the generic (row) rank of a rational transfer matrix between a set of external signals (measured excitation signals and unmeasured stochastic disturbance signals) and a set of internal vertex signals in the network. In an important theorem of Van der Woude \cite{vanderWoude1991graph}, a connection has been made between the generic rank of a dynamic transfer matrix and path-based conditions applied to the graph of the network. This connection has been  exploited in \cite{bazanella2017identifiability,hendrickx2018identifiability} to establish path-based conditions for the generic rank of a dynamic transfer matrix in the setting that all the vertices of the dynamic network are excited by sufficiently rich external signals. Additionally, the existing path-based conditions for generic network identifiability require all the nonzero transfers in the network matrix $G(q)$ to be parameterized independently. For formulating path-based conditions for the considered situation in this paper, including disturbance inputs and noise models, we first impose an additional assumption:
\begin{assu}\label{ass3}
	In model set $\mathcal{M}$, all the nonzero entries in $G(q,\theta)$ are parameterized, and
	{each row and column of $H(q,\theta)$ contains either a single nonzero (parameterized or nonparameterized) entry or only multiple nonzero parameterized entries.}
\end{assu}
This assumption on $H$ allows a $v$ signal being modeled as a white noise or multiple $v$ signals having correlations that are parameterized.  Furthermore, we define an auxiliary notion related to the graph of the network, in particular for the situation of having external disturbance signals incorporated.


	\begin{defn}[Extended graphs]\label{def:extendednet}
		Consider a dynamic network \eqref{eq:net} with the noise model \eqref{eq:noisemodel}. Let $\mathcal{G}$ be its underlying graph. An extended graph $\widehat{\mathcal{G}}$ of  the parameterized part of $\mathcal{M}$ is defined  by $V(\widehat{\mathcal{G}}) = V(\mathcal{G}) \cup \widehat{V}$, and $E(\widehat{\mathcal{G}}) = E(\mathcal{G}) \cup \widehat{E}$, where
		\begin{align*}
		\widehat{V}: &= \{L+1,L+2,...,L+p-p_0\}, 
		\\	
		\widehat{E}: &= \{(i,j) \mid j \in \widehat{V}, i\in \mathcal{V}, H_{i, j-L}(q,\theta)~\text{is parameterized} \}, 
		\end{align*}
		with $p_0$ the number of nonparameterized columns in $H(q,\theta)$.
	\end{defn}
Note that the extended graph $\widehat{\mathcal{G}}$ in Definition~\ref{def:extendednet} only captures the nonzero parameterized transfers in $G$ and $H$. The set $\widehat{V}$ collects additional vertices associated with the noises signals $e(t)$, from which there are parameterized mappings to the internal signals of the network \eqref{eq:net}. These parameterized mappings are then indicated by the edges in $\widehat{E}$.
	Thus, the extended graph $\widehat{\mathcal{G}}$ integrates the structure of the original graph  $\mathcal{G}$ and the correlation structure of the process noises simultaneously. Denote $\mathcal{U}$ as the set of \textit{stimulated vertices} in $\widehat{\mathcal{G}}$,  which are excited by the external signals $\mathcal{R} \cup \{ e_1, e_2, ... , e_p \}$,  and let $\widehat{\mathcal{P}}_j$ be the  set of in-neighbors of vertex $j$ in the extended graph $\widehat{\mathcal{G}}$.
In the following, we use  the extended graph of the network \eqref{eq:net} to characterize generic identifiability. 
	\begin{lem}\label{lem:genericid}
		Given a network model set $\mathcal{M}$ that satisfies the conditions in Lemma~\ref{lem1} and Assumptions~\ref{ass2} and \ref{ass3}. Then $\mathcal{M}$ is generically identifiable from $(r, w)$ if and only if in its extended graph $\widehat{\mathcal{G}}$,
		\begin{equation} \label{eq:maxpath}
		b_{\mathcal{U} \rightarrow \widehat{\mathcal{P}}_j} = |\widehat{\mathcal{P}}_j|
		\end{equation}
		holds for all $j \in V(\mathcal{G})$, where $b_{\mathcal{U} \rightarrow \widehat{\mathcal{P}}_j}$ is the maximal number of vertex-disjoint paths from  $\mathcal{U}$ to $\widehat{\mathcal{P}}_j$.
	\end{lem}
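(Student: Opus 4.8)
The plan is to obtain the stated equivalence by composing two known results into a single chain of ``if and only if'' steps: the algebraic rank characterization of generic identifiability in Proposition~\ref{pro1}, and Van der Woude's theorem \cite{vanderWoude1991graph}, which equates the generic rank of a structured transfer matrix with a maximal number of vertex-disjoint paths in its associated graph. The extended graph $\widehat{\mathcal{G}}$ of Definition~\ref{def:extendednet} is precisely the graph on which Van der Woude's theorem will be invoked, so the bulk of the argument is to show that the per-node rank condition behind Proposition~\ref{pro1} is faithfully encoded by paths in $\widehat{\mathcal{G}}$.

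First I would translate generic identifiability into a per-node generic full-row-rank condition. By Lemma~\ref{lem1}, the data $(r,w)$ fix the entire transfer matrix $T(q,\theta)$ of \eqref{eq:T}, so \eqref{eq:implication} reduces to recovering $(G,H)$ from $(I-G(q,\theta))\,T(q,\theta)=[\,R\ \ H(q,\theta)\,]$. Reading this identity row by row, the parameterized unknowns attached to vertex $j$ are exactly the modules $G_{ji}$ with $i\in\mathcal{P}_j$ together with the parameterized entries of the $j$-th row of $H$; by Assumptions~\ref{ass2} and \ref{ass3} these are independently parameterized and range over all proper rational functions. Collecting these unknowns as the output set and the stimulating signals as the input set, the reasoning behind Proposition~\ref{pro1} yields that generic identifiability is equivalent to the associated transfer matrix having full row rank for almost all $\theta_0$, simultaneously for every $j$. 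The key bookkeeping step is to identify this output set with the in-neighbor set $\widehat{\mathcal{P}}_j$ of $j$ in $\widehat{\mathcal{G}}$ and the input set with the stimulated vertices $\mathcal{U}$: a non-parameterized column of $H$ acts as a known excitation and enters $\mathcal{U}$ as a source at the node where it is injected, whereas a parameterized column is an unknown that is both a source of excitation and an object to be identified, and is represented by a fresh vertex of $\widehat{V}$ whose incidences reproduce the sparsity pattern of that column. Here Assumption~\ref{ass3}, stipulating that each column of $H$ is either a single entry or entirely parameterized, is what keeps this source/target split well-defined.

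Second, I would apply Van der Woude's theorem \cite{vanderWoude1991graph} to this structured matrix. Because its nonzero entries are parameterized independently and freely---guaranteed by Assumptions~\ref{ass2} and \ref{ass3}---its generic rank equals the maximal number of mutually vertex-disjoint paths from the input set to the output set in $\widehat{\mathcal{G}}$, that is, equals $b_{\mathcal{U}\rightarrow\widehat{\mathcal{P}}_j}$. Since the matrix has exactly $|\widehat{\mathcal{P}}_j|$ rows, its full row rank for almost all $\theta_0$ is equivalent to $b_{\mathcal{U}\rightarrow\widehat{\mathcal{P}}_j}=|\widehat{\mathcal{P}}_j|$, which is \eqref{eq:maxpath}. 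Requiring this for every $j\in V(\mathcal{G})$, and using that the condition in Proposition~\ref{pro1} is sufficient in general and necessary under Assumption~\ref{ass2}, both implications of the asserted equivalence follow at once.

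The main obstacle is the second half of the first step: verifying that the extended graph absorbs the noise model exactly, so that path counts in $\widehat{\mathcal{G}}$ reproduce the generic rank of the true row-$j$ matrix rather than of the narrower object $\breve T_j$ from $\mathcal{U}_j$ to $\mathcal{P}_j$. The delicate point is the dual role played by a parameterized noise: it both supplies excitation that can travel through the network toward $\widehat{\mathcal{P}}_j$ and counts among the unknowns of its own row. I would handle this by checking that such a noise vertex, being a source of $\widehat{\mathcal{G}}$, contributes to $\widehat{\mathcal{P}}_j$ only through a trivial (length-zero) path when it is itself an in-neighbor of $j$, and otherwise serves only as a path origin, so that no double counting occurs; Assumption~\ref{ass3} is again what rules out the mixed columns that would break this accounting. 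Finally, I would confirm that after this absorption the genericity hypotheses of Van der Woude's theorem are met, which is where Assumption~\ref{ass2} is essential for passing from the structural rank to the generic rank.
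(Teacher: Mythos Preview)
Your proposal is correct and follows essentially the same route as the paper: both arguments compose Proposition~\ref{pro1} with Van der Woude's generic-rank/path equivalence, and both handle the noise model by absorbing the parameterized columns of $H$ into an enlarged network so that all remaining nonzero modules are independently parameterized. The paper makes this absorption explicit by rewriting the dynamics as an extended system $w' = G_{\mathrm{ext}}\,w' + u$ with $G_{\mathrm{ext}}=\left[\begin{smallmatrix}G & H_\theta\\ 0 & 0\end{smallmatrix}\right]$ and $u$ collecting $Rr$, the non-parameterized noise $H_f e_f$, and $e_\theta$; your ``bookkeeping'' paragraph describes exactly this construction in words, and your observation that a parameterized noise vertex contributes to $\widehat{\mathcal{P}}_j$ only via a length-zero path is the same point the paper uses to pass from $b_{\mathcal{U}_j\rightarrow\mathcal{P}_j}=|\mathcal{P}_j|$ to $b_{\mathcal{U}\rightarrow\widehat{\mathcal{P}}_j}=|\widehat{\mathcal{P}}_j|$.
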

	\begin{proof}
For the situation of a dynamic network without disturbance signals, it has been shown in Proposition V.1 of \cite{hendrickx2018identifiability} that there is an equivalence between the generic row rank of the matrix transfer function $\breve T_j(\theta_0)$ and $b_{\mathcal{U}_j\rightarrow \mathcal{P}_j}$ in the graph that is related to the parameterized model set $\mathcal{M}$. For this equivalence it is required that all nonzero entries in the transfer function matrix are parameterized independently, relating back to the original system theoretic result of \cite{vanderWoude1991graph}, and that all modules are parameterized without a restriction on the model order, as formulated in  Assumption~\ref{ass2}. In \cite{hendrickx2018identifiability} this latter condition has been formulated, in a slightly different setting, as considering {\it any} rational transfer matrix parametrization consistent with the directed graph. If disturbance signals are included, we need to show that the same properties hold when using the extended graph. 
With Assumption~\ref{ass3}, the noise model in \eqref{eq:noisemodel} can be reformed as 
$v(t) = H_{\theta}(q, \theta) e_{\theta}(t) + H_f(q) e_f(t)$, 
where $e_{\theta}(t) \in \mathbb{R}^{p - p_0}$, $ e_f(t) \in \mathbb{R}^{p_0}$, and  all the nonzero entries of $H_{\theta}(q, \theta)$ are parameterized, while those of $H_f(q)$ are nonparameterized. 
  Then,  the network equation \eqref{eq:net} can simply be rewritten as
\begin{equation}\label{eq:extg}
\underbrace{\begin{bmatrix} 
	w \\ w_e\end{bmatrix}}_{w'} 
= 
\underbrace{\begin{bmatrix} 
	G(q,\theta) & H_{\theta}(q, \theta) \\
	 0 & 0 
	\end{bmatrix}}_{G_{\mathrm{ext}}}
\underbrace{\begin{bmatrix} 
	w \\ w_e\end{bmatrix}}_{w'} 
+ \underbrace{\begin{bmatrix} 
	R(q) r +   H_f(q) e_f \\ e_{\theta} 
	\end{bmatrix}
}_{u}
\end{equation}
where $w_e = e_{\theta}$, and $G_{\mathrm{ext}}$ now reflects the network matrix of the extended network, in which all the nonzero entries are parameterized. Full rank properties of mappings from signals in $u$ to signals in $w'$ can now be derived using path-based conditions of the graph related to $G_{\mathrm{ext}}$, just like the results that have been derived in \cite{hendrickx2018identifiability}.
This proves the condition $b_{\mathcal{U}_j\rightarrow \mathcal{P}_j} = |\mathcal{P}_j|$. 
Since in the extended graph  
Note that in the extended graph, $\mathcal{U}\backslash\mathcal{U}_j$ is the set of $e$ signals which have  parameterized edges incident to node $j$, where this set coincides with $\widehat{\mathcal{P}}_j\backslash \mathcal{P}_j$. Thus, it is verified that $b_{\mathcal{U}\backslash\mathcal{U}_j \rightarrow \widehat{\mathcal{P}}_j\backslash \mathcal{P}_j} = |\widehat{\mathcal{P}}_j|-|\mathcal{P}_j|$ where the corresponding vertex disjoint paths are vertex disjoint with the vertex disjoint paths from $\mathcal{U}_j$ to $\mathcal{P}_j$. 
This proves that the condition $b_{\mathcal{U}_j\rightarrow \mathcal{P}_j} = |\mathcal{P}_j|$ is equivalent to \eqref{eq:maxpath}.   
	\end{proof}


Note that excitation signals $r(t)$ and noises $e(t)$ contribute differently to the generic identifiability of the model set $\mathcal{M}$, and in the construction  of  extended graphs in Definition~\ref{def:extendednet}, we interpret all the parameterized entries in $H(q)$ as edges in $\widehat{E}$. In this way, the notion  of  extended  graphs $\widehat{\mathcal{G}}$ unifies the roles of external signals $r(t)$ and $e(t)$, while the only difference is  that $e(t)$ are always connected to a subset of vertex signals $w(t)$ via parameterized edges in $\widehat{\mathcal{G}}$.
Therefore, a concise characterization of generic identifiability can be provided in Lemma~\ref{lem:genericid} for dynamic networks with correlated  noises, whose correlation structure is captured in the corresponding extended graph as well.  The condition in Proposition~\ref{pro1} can now be checked using only one equality \eqref{eq:maxpath}, and moreover this checking is based on the vertex-disjoint paths from a common set $\mathcal{U}$ of stimulated vertices to all the in-neighbors of different vertex in $\widehat{\mathcal G}$.

	In the following example, we  demonstrate how the extended graph $\widehat{\mathcal{G}}$ is constructed and how it is used to check the generic identifiability of $\mathcal{M}$.
	\begin{exm} \label{exam1}
				\begin{figure}[!tp]\centering
			\begin{minipage}[t]{\linewidth}
				\includegraphics[scale=.3]{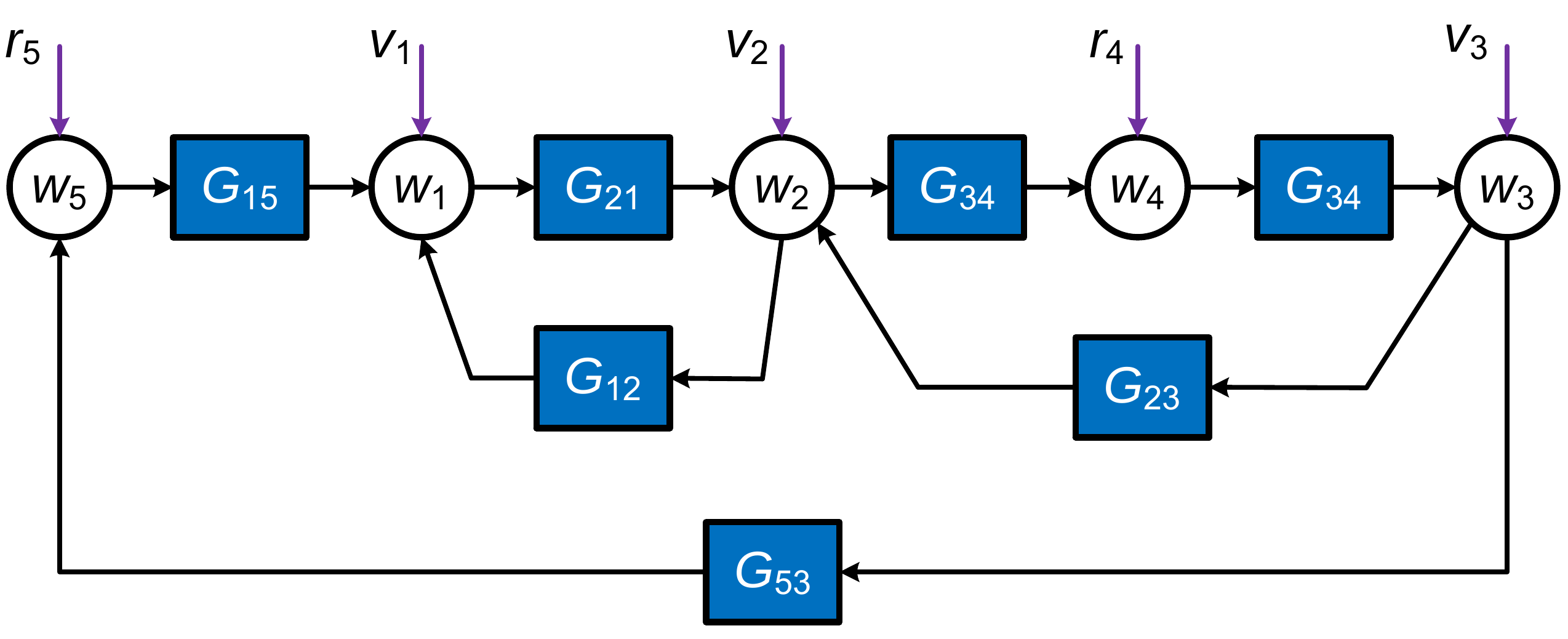}
				\centering
				\subcaption{}
				\label{fig:5nodenet}
			\end{minipage}
			\begin{minipage}[t]{\linewidth}
				\centering
				\includegraphics[scale=.33]{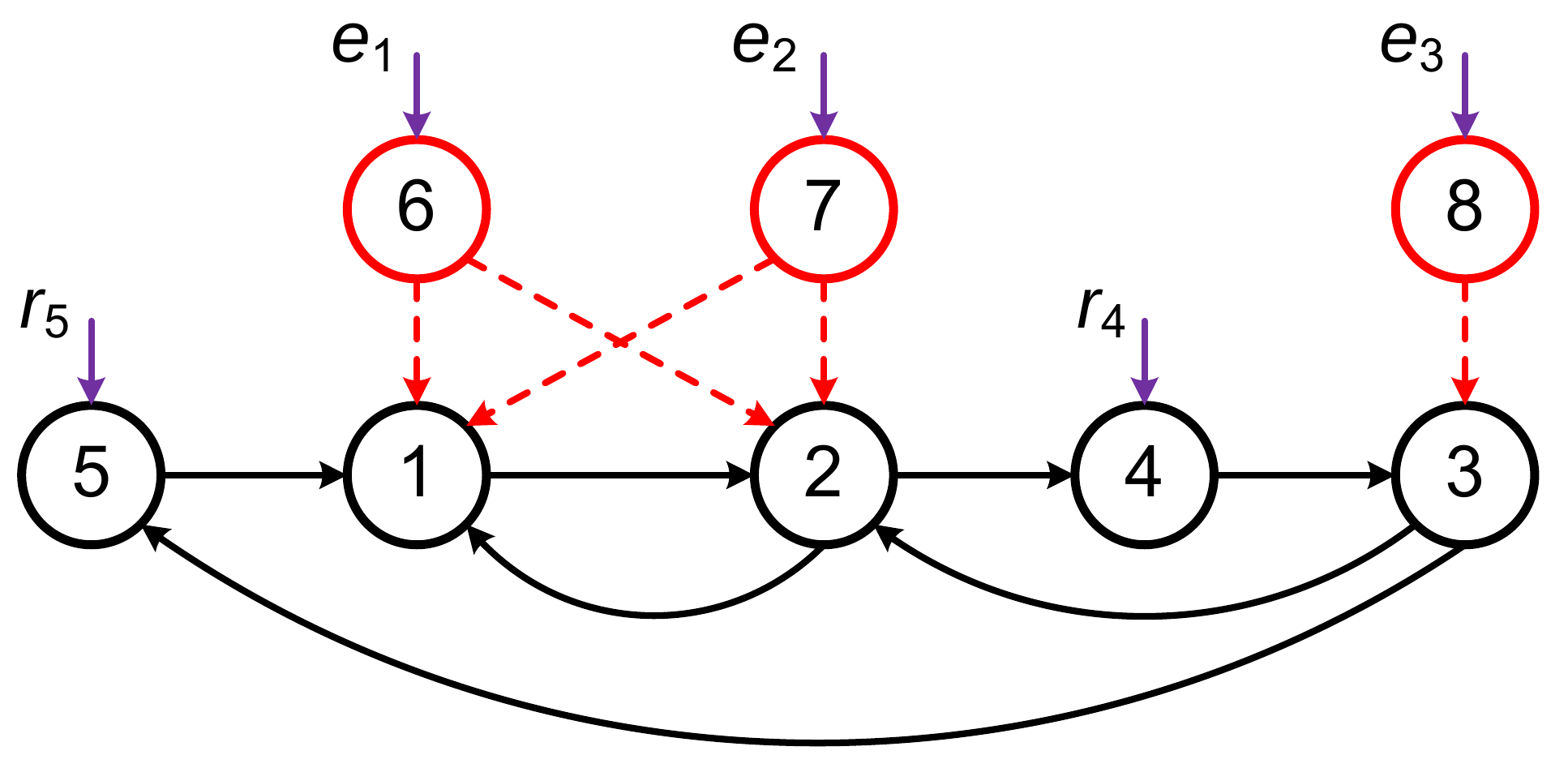}
				\subcaption{}
				\label{fig:ex5nodenet}
			\end{minipage}%
			\caption{Illustration of the extended graph of a given dynamic network. (a) The original dynamic network $\mathcal{G}$, in which $v_1$ and $v_2$ are correlated process noises; (b) The extended graph $\widehat{\mathcal{G}}$, in where the dashed edges are additional parameterized edges. }
		\end{figure}
	
		Consider a dynamic network shown in Fig.~\ref{fig:5nodenet}, where $v_1(t)$ and $v_2(t)$ are correlated such that
		\begin{equation*}
		\setlength{\dashlinegap}{2pt}
		\left[
		\begin{array}{c:c}
		H(\theta) & R
		\end{array}
		\right]
		= \left[
		\begin{array}{ccc:cc}
		H_{11}(\theta) & H_{12}(\theta) & 0 & 0 & 0\\
		H_{21}(\theta) & H_{22}(\theta) & 0 & 0 & 0\\
		0 & 0 & H_{33}(\theta) &  0 & 0\\
		0 & 0 & 0 &  1 & 0\\
		0 & 0 & 0 &  0 & 1
		\end{array}
		\right].
		\end{equation*}
		By Definition~\ref{def:extendednet}, the extended graph $\widehat{\mathcal{G}}$ is defined and shown in Fig.~\ref{fig:ex5nodenet}, where $\widehat{V}=\{6,7,8\}$ is the set of additional vertices added to $\mathcal{G}$, and $\widehat{E}=\{(1,6),(2,6),(1,7),(2,7),(3,8)\}$ are generated based on $H(\theta)$, indicating the edges directed from $\widehat{V}$ to a subset in $\mathcal{V}$.
		
		We now make use of the extended graph in Fig.~\ref{fig:ex5nodenet} to check the generic identifiability of the dynamic network set $\mathcal{M}$. In $\widehat{\mathcal{G}}$, the set of stimulated vertices is $\mathcal{U}: = \{4,5,6,7,8\}$, and the in-neighbors of vertex $1$ are collected in $\widehat{\mathcal{P}}_1 = \{2, 5, 6, 7\}$. Clearly,
		there exist $4$ vertex-disjoint
		paths from $\mathcal{U}$ to $\widehat{\mathcal{P}}_1$, namely, the condition \eqref{eq:maxpath} holds for $j = 1$.
		We continue to verify \eqref{eq:maxpath} for the other vertices $j \in \mathcal{V} = \{1,2,3,4,5\}$ and find that the maximal number of vertex-disjoint paths in $\widehat{\mathcal{G}}$ from $\mathcal{U}$ to $\widehat{\mathcal{P}}_j$ is always equal to $|\widehat{\mathcal{P}}_j|$. Therefore, the network model set $\mathcal{M}$ is generically identifiable.

	\end{exm}

	For the synthesis problem studied in this paper, the condition in Lemma~\ref{lem:genericid} is still not convenient enough to use, as it requires to check the equation \eqref{eq:maxpath} vertex by vertex. Thus, we will introduce in Section~\ref{sec:pseudotree} a novel graph concept, called \textit{pseudotrees}, and relevant results on disjoint pseudotree covering. Then in Section~\ref{sec:result}, a new characterization of generic identifiability will be presented based on disjoint pseudotrees, which further leads to an excitation signal allocation approach for generic identifiability.

	\section{Disjoint Pseudotree Covering}
	\label{sec:pseudotree}
	We make the result of this section self-contained and
	independent of the signal allocation problem of dynamic networks.
	In this section, a novel graph concept, called \textit{directed pseudotree}, is introduced.
	
	\begin{defn}[Directed pseudotrees] \label{def:pseudotree}
		A connected simple directed  graph $\mathcal{T}$, with $|V(\mathcal{T})| \geq 2$, is called a (directed) \textbf{pseudotree} if   $|\mathcal{N}_i^-| \leq 1$, for all $i \in V(\mathcal{T})$.
	\end{defn}
	
	The above concept of pseudotrees is an extension of its definition in the undirected case, in which they are also referred to as \textit{unicyclic graphs}, see e.g., \cite{nikiel1989pseduotree,cvetkovic1987unicyclicgraph}. Particularly, we exclude a singleton vertex being a pseudotree.	 Analogous to directed tree graphs, the following terminologies are used.
	\begin{defn}
		In a directed pseudotree $\mathcal{T}$, a vertex is called a \textbf{root}, if there is exactly one directed path from this vertex to every other vertex in $\mathcal{T}$. Furthermore, a vertex is called a \textbf{leaf} of $\mathcal{T}$, if it has no out-neighbors in $\mathcal{T}$, and a vertex is an \textbf{internal vertex} of $\mathcal{T}$, if it is neither a root nor a leaf.  We denote $\Upsilon(\mathcal{T})$ as the set that collects all the roots of a pseudotree $\mathcal{T}$. 
	\end{defn}

In Fig.~\ref{fig:pseudotrees}, typical examples of pseudotrees are presented, in which the definitions of roots, internal vertices and leaves are illustrated. Note that the class of directed pseudotrees also includes all directed rooted trees. However, different from the standard definition of trees, a pseudotree can allow for multiple roots, which form a directed circle with all the edges being oriented in the same direction, and outgoing branches from any vertex on this circle are also possible, see the right subplot in Fig.~\ref{fig:pseudotrees}. Hereafter, we will drop the word `directed' when we refer to a directed pseudotree.

	Related to the concept of vertex-disjoint paths, edge-disjoint  pseudotrees are defined as follows.
	\begin{defn}[Edge-disjoint pseudotrees] \label{def:disjointtree}
		Consider two pseudotrees $\mathcal{T}_1$ and $\mathcal{T}_2$ as subgraphs of a directed graph $\mathcal{G}$. $\mathcal{T}_1$ and $\mathcal{T}_2$ are called \textbf{disjoint} in $\mathcal{G}$ if the following two conditions hold.
		\begin{enumerate}
			\item $E(\mathcal{T}_1) \cap E(\mathcal{T}_2) = \emptyset$;
			\item $E_j \subseteq E(\mathcal{T}_1)$ or $E_j \subseteq E(\mathcal{T}_2)$, $\forall~j \in V(\mathcal{T}_1) \cup V(\mathcal{T}_2)$, where
			$
			E_j : = \{(j,i)\in E(\mathcal{T}_1) \cup E(\mathcal{T}_2) \mid i\in \mathcal{N}_j^+ \}.
			$
		\end{enumerate}
	\end{defn}

\begin{figure}
	\centering
	\includegraphics[scale=0.5]{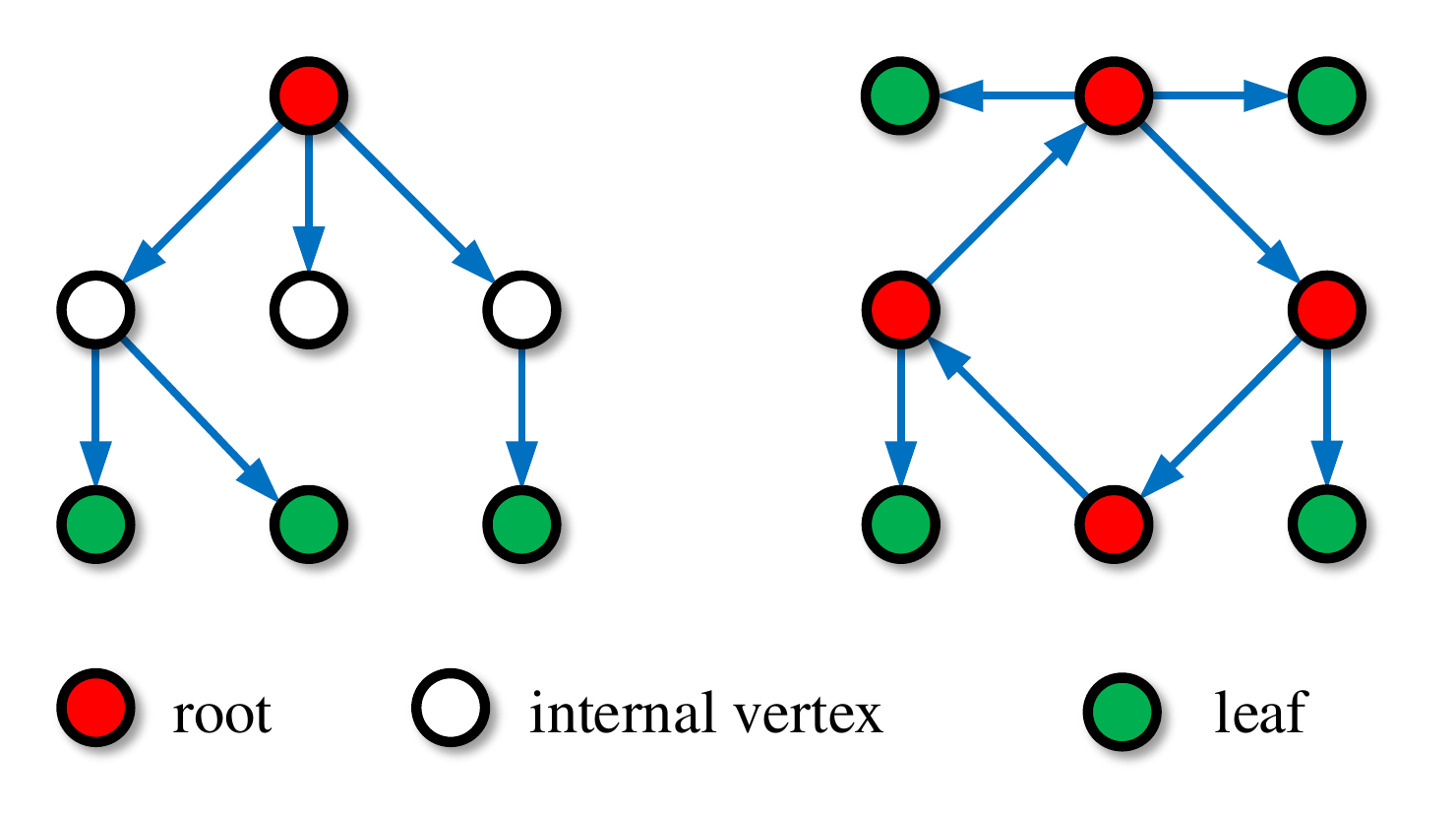}
	\caption{Typical examples of pseudotrees, in which roots, internal vertices and leaves are labeled with different colors. Note that a pseudotree may have multiple roots.}
	\label{fig:pseudotrees}
\end{figure}	
	
 The first condition means that $\mathcal{T}_1$ and $\mathcal{T}_2$ do not share any edges, while the second condition means that for each vertex, all outgoing edges in the set $V(\mathcal{T}_1) \cup V(\mathcal{T}_2)$ are in one and the same pseudotree. 
As a special case, if both $\mathcal{T}_1$ and $\mathcal{T}_2$ are directed rooted trees, then $\mathcal{T}_1$ and $\mathcal{T}_2$ do not share the same root or any common internal vertex. We illustrate the concept of disjoint pseudotrees with the following example.
	
\begin{figure}[!tp]\centering
	\centering
	\includegraphics[width=0.42\textwidth]{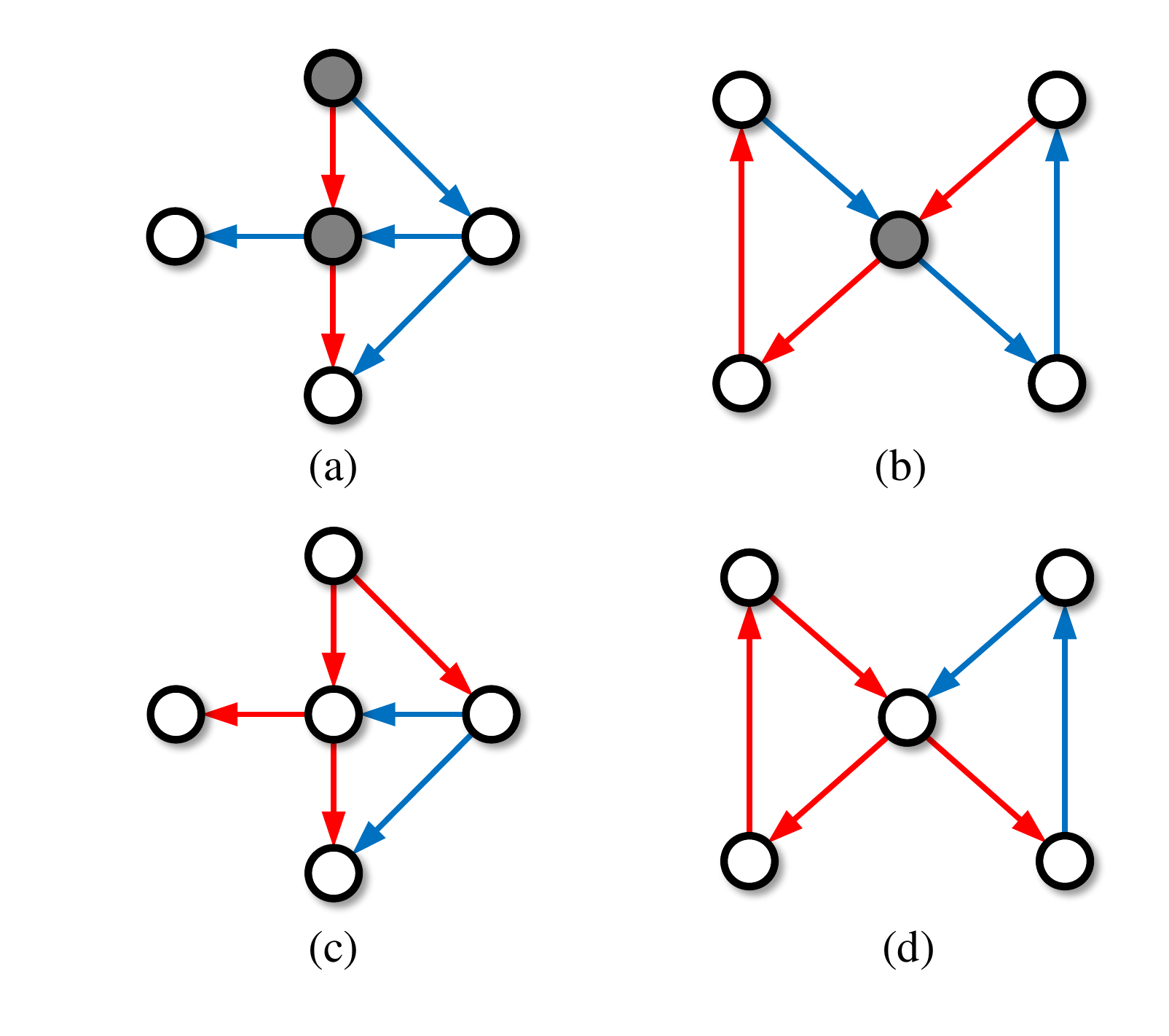}
	\caption{Illustration of disjoint pseudotrees, in which the different pseudotrees are induced by the edges with distinct colors. In (a) and (b), the pseudotrees are not disjoint, since the out-neighbors of the gray vertices are assigned
		to different pseudotrees. In contrast, the pseudotrees in (c) and (d) are characterized as disjoint pairs.}
	\label{fig:disjoint}
\end{figure}%
	
	\begin{exm}
		In Fig.~\ref{fig:disjoint}, we illustrate the conditions for disjoint pseudotrees. In (a) and (b), we decompose the directed graph into two pseudotrees, which do not share any common edges. However, they are not disjoint. In (a) and (b), the two outgoing edges of the internal vertex in the center have been assigned to different pseudotrees, which violates the second condition in Definition~\ref{def:disjointtree}. In contrast, we take a different decomposition of the two networks in (c) and (d), and then the two pseudotrees obtained in (c) and (d) become disjoint.
	\end{exm}
	
	It is worth noting that the notion of disjoint pseudotrees is closely related to that of vertex-disjoint paths. Consider $\mathcal{T}_1$ and $\mathcal{T}_2$ as two disjoint pseudotrees in $\mathcal{G}$. For any $i \in V(\mathcal{T}_1) \cap V(\mathcal{T}_2)$, if $|\mathcal{N}_i^-| \geq 2$, then there exist two in-neighbors of $i$ located in $\mathcal{T}_1$ and $\mathcal{T}_2$ separately.
Then, due to the fact that distinct pseudotrees cannot share any common root or internal vertex,
we can find two vertex-disjoint paths in the union $\mathcal{T}_1 \cup \mathcal{T}_2$ starting from two roots in $\mathcal{T}_1$ and $\mathcal{T}_2$, respectively, to two distinct in-neighbors of $i$, and each pseudotree contains exactly one path.

	Next, the concept of disjoint-edge covering for a directed graph is introduced.
	\begin{defn}[Disjoint-edge covering]
		Consider a directed graph $\mathcal{G}$, and let $\Pi: = \{\mathcal{T}_1, \mathcal{T}_2, ..., \mathcal{T}_n\}$ be a collection of connected subgraphs of $\mathcal{G}$. The edges in a set $\mathcal{E} \subseteq E(\mathcal{G})$ are {covered} by $\Pi$, if $ \mathcal{E} \subseteq E(\mathcal{T}_1) \cup E(\mathcal{T}_2) \cup ... \cup E(\mathcal{T}_n) $, and
		$\Pi$ is called a \textbf{covering} of  $\mathcal{E}$. Moreover, if all the elements in $\Pi$ are pseudotrees, which are disjoint to each other, then $\Pi$ is a \textbf{disjoint pseudotree covering} of $\mathcal{E}$.
	\end{defn}
	
	The concept of connectedness of the subgraphs is defined in Section~\ref{sec:preliminaries:graph}. Relating to the definition of disjoint pseudotree coverings, the following two lemmas are given.
	\begin{lem} \label{lem:treesmost}
		For a directed simple graph $\mathcal{G}$ with $|V(\mathcal{G})| \geq 2$, there always exists a set of disjoint pseudotrees that cover all the edges in $E(\mathcal{G})$ or any subset of $E(\mathcal{G})$.
	\end{lem}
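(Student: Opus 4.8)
The plan is to prove Lemma~\ref{lem:treesmost} constructively, by exhibiting a trivial disjoint pseudotree covering and then arguing that it extends to any edge subset. The key observation is that the definition of a pseudotree (Definition~\ref{def:pseudotree}) only constrains the \emph{in-degree} of each vertex to be at most one, so the simplest pseudotrees available are single edges. First I would note that any single edge $(i,j) \in E(\mathcal{G})$, together with its two endpoints $\{i,j\}$, forms a connected simple directed graph with $|V| = 2$ in which every vertex has in-degree at most $1$; hence it is a pseudotree by Definition~\ref{def:pseudotree}.

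Next I would verify the disjointness. Take the collection $\Pi$ consisting of all such single-edge pseudotrees, one for each edge in the target set $\mathcal{E} \subseteq E(\mathcal{G})$. Condition~1 of Definition~\ref{def:disjointtree} holds because distinct single-edge pseudotrees share no edge by construction. For condition~2, fix any vertex $j$ appearing in two of these pseudotrees; the set $E_j$ of outgoing edges of $j$ that lie in the union of the two pseudotrees contains at most the single edge belonging to each of the two trees, and if $j$ has an outgoing edge in each tree these are distinct edges, so $E_j$ would not be contained in either—hence I must instead group all outgoing edges of a common vertex into the \emph{same} pseudotree. This points to the refinement below.

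The clean construction is therefore to partition the edges by their \emph{source vertex}: for each vertex $i$ that has at least one outgoing edge in $\mathcal{E}$, form the subgraph $\mathcal{T}_i$ consisting of $i$ together with all edges $(i,k) \in \mathcal{E}$ and their heads. Each $\mathcal{T}_i$ is a rooted star with root $i$, in which every leaf has in-degree $1$ and the root has in-degree $0$, so $\mathcal{T}_i$ is a pseudotree (and is connected with $|V| \ge 2$). Condition~1 holds since edges are partitioned by source. For condition~2, the set $E_j$ of outgoing edges of any vertex $j$ lies entirely in $\mathcal{T}_j$ by construction, trivially satisfying the containment requirement. Thus $\Pi = \{\mathcal{T}_i\}$ is a disjoint pseudotree covering of $\mathcal{E}$, and taking $\mathcal{E} = E(\mathcal{G})$ covers all edges.

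I expect no serious obstacle here, since the statement asserts mere \emph{existence} of a covering rather than a minimal one; the entire content is recognizing that grouping edges by common source automatically satisfies the second disjointness condition, whereas a naive edge-by-edge partition does not. The one point requiring a little care is the hypothesis $|V(\mathcal{G})| \ge 2$ together with connectedness of $\mathcal{G}$: connectedness guarantees that if $|V(\mathcal{G})| \ge 2$ then $E(\mathcal{G})$ is nonempty, so at least one vertex has an outgoing edge and the covering is well-defined and nonempty; for a proper subset $\mathcal{E}$ the construction ranges only over vertices with outgoing edges in $\mathcal{E}$, and the empty case $\mathcal{E} = \emptyset$ is covered vacuously by $\Pi = \emptyset$.
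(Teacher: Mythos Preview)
Your proposal is correct and, after the brief detour through single-edge pseudotrees, arrives at exactly the construction the paper uses: for each vertex with outgoing edges, form the star pseudotree consisting of that vertex together with all its out-neighbors, and observe that grouping by source automatically satisfies both disjointness conditions. The paper's proof is the same construction (stated for vertices in $V(\mathcal{G})\setminus\mathcal{S}_\mathrm{in}(\mathcal{G})$), though it verifies disjointness less explicitly than you do; your remark about connectedness of $\mathcal{G}$ is unnecessary since the lemma does not assume it and the star construction works regardless.
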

	\begin{proof}
		To prove this statement, we consider each vertex $j \in V(\mathcal{G}) \setminus {\mathcal{S}_\mathrm{in}}(\mathcal{G})$, with ${\mathcal{S}_\mathrm{in}}(\mathcal{G})$ the set of all the sinks of $\mathcal{G}$. Starting from $j$, we can construct a directed star tree (a special type of pseudotrees) with $j$ as the single root and the vertices in $\mathcal{N}_j^+$ as the leaves. Then, $|V(\mathcal{G}) \setminus {\mathcal{S}_\mathrm{in}}(\mathcal{G})|$ pseudotrees are formed as a covering of $E(\mathcal{G})$, which are disjoint, since any two trees do not share a common root or any common internal vertex. For any subset of $E(\mathcal{G})$, its disjoint pseudotree covering can be found using the similar approach.
	\end{proof}
	
	Let us define a \textit{minimal} pseudotree, which only contains one root and all the out-neighbors of this root. By the proof of Lemma~\ref{lem:treesmost}, the maximal number of disjoint pseudotrees that coexist in $\mathcal{G}$ is $|V(\mathcal{G})\setminus {\mathcal{S}_\mathrm{in}}(\mathcal{G})|$. 
	Then, the following lemma holds.
	
	\begin{lem} \label{lem:treemore}
	Let $\mathcal{G}$ be a simple directed graph. If there exist $k_1$ disjoint pseudotrees covering $E(\mathcal{G})$, with $k_1 < |V(\mathcal{G})\setminus {\mathcal{S}_\mathrm{in}}(\mathcal{G})|$, then there also exist $k_2$ disjoint pseudotrees, for any $k_1 < k_2 \leq |V(\mathcal{G}) \setminus {\mathcal{S}_\mathrm{in}}(\mathcal{G})|$, that cover $E(\mathcal{G})$.
	\end{lem}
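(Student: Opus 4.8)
The plan is to show that the set of integers $k$ for which $E(\mathcal{G})$ admits a disjoint pseudotree covering of size $k$ is an interval whose right endpoint is $N := |V(\mathcal{G}) \setminus \mathcal{S}_\mathrm{in}(\mathcal{G})|$. It suffices to prove the single-step claim: \emph{from any disjoint pseudotree covering of $E(\mathcal{G})$ of size $k < N$ one can construct a disjoint pseudotree covering of size $k+1$}. Granting this, I would start from the assumed covering of size $k_1$ and apply the claim repeatedly; this is legitimate at every stage because the current size $s$ always satisfies $k_1 \le s < k_2 \le N$, hence $s < N$. After exactly $k_2 - k_1$ steps one reaches a covering of size $k_2$.

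To set up the single-step claim I first record a bookkeeping observation. By the second condition of Definition~\ref{def:disjointtree}, for every vertex $j$ all of its outgoing edges appearing in the covering lie in a single pseudotree; call the set $E_j$ of these edges the \emph{bundle} of $j$. Since the covering covers all of $E(\mathcal{G})$, every non-sink vertex has a nonempty bundle, and the first condition (edge-disjointness) forces each bundle to belong to exactly one pseudotree. Thus the covering induces a partition of $V(\mathcal{G}) \setminus \mathcal{S}_\mathrm{in}(\mathcal{G})$ into $k$ nonempty classes, one per tree, consisting of the vertices whose bundles that tree owns. In particular $k \le N$, and if $k < N$ the pigeonhole principle yields a pseudotree $\mathcal{T}$ owning at least two bundles, i.e. having at least two non-leaf vertices.

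The heart of the argument is to split such a $\mathcal{T}$ into two connected pseudotrees by partitioning its bundles, thereby raising the covering size by one. I would argue by cases on the structure of $\mathcal{T}$. If $\mathcal{T}$ is a rooted tree, pick a non-leaf vertex $j$ other than the root and detach the entire subtree rooted at $j$: this subtree is a connected pseudotree, while the remainder — $\mathcal{T}$ with $j$ turned into a leaf — stays connected and is a pseudotree. If $\mathcal{T}$ contains its unique directed cycle $C$ and some non-leaf vertex lies off $C$, detach the subtree rooted at such a vertex; the remainder still contains all of $C$ and stays connected. Finally, if every non-leaf vertex lies on $C$, I detach the bundle $E_{v_1}$ of a single cycle vertex $v_1$ as a star: removing $E_{v_1}$ breaks the cycle into a path that still links all remaining cycle vertices, so the remainder stays connected, while the leaves hanging from $v_1$ travel with $v_1$ into the star. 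In each case both pieces have at least two vertices, satisfy $|\mathcal{N}_i^-| \le 1$, and are edge-disjoint with bundles kept intact, so the refined collection again meets both conditions of Definition~\ref{def:disjointtree} and covers the same edges.

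I expect the only genuine obstacle to be the connectivity of the two pieces under the constraint that bundles are atomic (the second condition of Definition~\ref{def:disjointtree}). The tempting uniform move of peeling off one vertex's bundle fails precisely when that vertex carries a nontrivial hanging subtree, since the subtree then detaches from the rest; this is exactly what forces the split to be ``detach a full subtree'' in the tree-like situations, and ``peel a single cycle vertex'' only when all branching occurs on the cycle. Once the split is chosen correctly, verifying that the two new pseudotrees are disjoint from each other and from the untouched pseudotrees of the covering is routine, because cutting along bundle boundaries automatically preserves both the edge-disjointness and the out-edge grouping required by Definition~\ref{def:disjointtree}.
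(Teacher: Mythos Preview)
Your proposal is correct and follows essentially the same route as the paper: both prove the single-step increment by locating a pseudotree that owns at least two ``bundles'' and splitting it into two disjoint pseudotrees, then iterating. The paper's split is slightly more economical---it always peels off a star (at a deepest internal vertex, or at one root of the cycle) rather than a full subtree---but your case analysis and the paper's (internal vertex present vs.\ multiple roots) are equivalent reformulations of the same argument.
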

	\begin{proof}
		The maximal number of disjoint pseudotrees that coexist in $\mathcal{G}$ does not exceed $|V(\mathcal{G})\setminus {\mathcal{S}_\mathrm{in}}(\mathcal{G})|$, where ${\mathcal{S}_\mathrm{in}}(\mathcal{G})$ is the set of the sinks in $\mathcal{G}$. It then requires $k_1 < |V(\mathcal{G}) \setminus {\mathcal{S}_\mathrm{in}}(\mathcal{G})|$, implying that in the $k_1$ disjoint pseudotrees,
		there exists at least one pseudotree $\mathcal{T}_k$ which contains at least one internal vertex or contains multiple roots. In both cases, we will show that $\mathcal{T}_k$ can be decomposed into two disjoint pseudotrees.
		
		Suppose $\mathcal{T}_k$ contains internal vertices. We can always find an internal vertex $i$ with all its out-neighbors being the leaves of $\mathcal{T}_k$.
		Define a directed tree ${T}_a$ with $i$ as the root and $\mathcal{N}_i^+$ as the leaves. Thereby, $\mathcal{T}_k$ is decomposed into two a directed tree ${T}_a$ and a pseudotree $\mathcal{T}_b$, where $R({T}_b) : = \Upsilon(\mathcal{T}_k)$, $V(\mathcal{T}_b) \subseteq (\mathcal{T}_k)$, and $E({T}_b): = E(\mathcal{T}_k) \setminus E({T}_a)$. Note that ${T}_a$ and $\mathcal{T}_b$ are disjoint by Definition~\ref{def:disjointtree}. Moreover, since ${T}_a$ and $\mathcal{T}_b$ are subgraphs of $\mathcal{T}_k$, which is disjoint to the other trees, ${T}_a$ and $\mathcal{T}_b$ are also disjoint to the other pseudotrees. Next, suppose $\mathcal{T}_k$ does not contain any internal vertex but multiple roots, i.e., $|\Upsilon(\mathcal{T}_k)| \geq 2$. In this case, we define the directed tree ${T}_a$, which is rooted at one of $\Upsilon(\mathcal{T}_k)$ and includes all the out-neighbors of this root as the leaves of ${T}_a$. Then, similar to the previous case, we can partition $\mathcal{T}_k$ into two disjoint pseudotrees, which are disjoint to the other pseudotrees in $\mathcal{G}$.
		Therefore, in the above cases, $\mathcal{E}$ can be covered by $k_1 + 1$ disjoint pseudotrees. The statement of this lemma follows by iteratively applying the above reasoning for all $k_2 \geq k_1 + 1$.
	\end{proof}

	\section{Allocation of Excitation Signals}
	\label{sec:result}
	
	On the basis of disjoint pseudotree covering, we present a novel approach for the allocation of excitation signals such that the generic identifiability of a network model set $\mathcal{M}$ is achieved. The key step relies on a partitioning of the extended graph $\widehat{\mathcal G}$ into a minimal number of disjoint pseudotrees.
	
	\subsection{Generic Identifiability: A  Pseudotree Characterization}
	
	From Section~\ref{sec:pseudotree}, we notice that there is a clear association between vertex-disjoint paths and disjoint pseudotrees.
	Thus, this section provides a novel characterization for  generic identifiability using the concept of disjoint pseudotrees, which is used as the theoretical foundation for the follow-up synthesis method.

	\begin{thm} \label{thm:treecover}
		Consider a  network model set $\mathcal{M}$ defined in \eqref{eq:modset}, which satisfies the conditions of Lemma \ref{lem1} and Assumptions \ref{ass2} and \ref{ass3}.
		Let $\widehat{\mathcal{G}}$ be its extended graph with parameterized edges set $E(\widehat{\mathcal{G}})$ and the set of stimulated vertices $\mathcal{U} = \{\tau_1, \tau_2, ..., \tau_{|\mathcal{U}|}\} \subseteq V(\widehat{\mathcal{G}})$, where $|\mathcal{U}| = K + p$.
		Then, the network model set $\mathcal{M}$ is generically identifiable from $(r, w)$ \textbf{if and only if}  
        there exists a disjoint pseudotree covering of $E(\widehat{\mathcal{G}})$, denoted by $\Pi = \{\mathcal{T}_1, \mathcal{T}_2, ..., \mathcal{T}_{n}\}$ with $n \geq |\mathcal{U}|$, such that $\tau_k \in \Upsilon(\mathcal{T}_k)$, $\forall~k \in \{1, 2, ..., |\mathcal{U}|\}$, and $b_{\mathcal{U} \rightarrow \widehat{\mathcal{P}}_j} = |\widehat{\mathcal{P}}_j|$, $\forall~j \in V(\mathcal{T}_{|\mathcal{U}|+1})\cup ... \cup V(\mathcal{T}_{n})$.       
		Here, $\Upsilon(\mathcal{T}_k)$ is the set of roots in the pseudotree $\mathcal{T}_k$, and ${b}_{\mathcal{U} \rightarrow \widehat{\mathcal{P}}_j}$ denotes the  maximum number of mutually vertex-disjoint
		paths from $\mathcal{U}$ to $\widehat{\mathcal{P}}_j$.
	\end{thm}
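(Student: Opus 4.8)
The plan is to prove the equivalence by reducing both directions to the vertex-disjoint-path condition $b_{\mathcal{U}\rightarrow\widehat{\mathcal{P}}_j}=|\widehat{\mathcal{P}}_j|$ for all $j\in V(\mathcal{G})$, which by Lemma~\ref{lem:genericid} is equivalent to generic identifiability. The whole theorem then amounts to showing that this path condition, required at \emph{every} vertex, is equivalent to the existence of a covering that only demands the condition at the vertices of the \emph{last} $n-|\mathcal{U}|$ pseudotrees.

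For the ``if'' direction, I would fix $j\in V(\mathcal{G})$ with in-neighbor set $\widehat{\mathcal{P}}_j$ and trace, for each $p\in\widehat{\mathcal{P}}_j$, the pseudotree containing the edge $(p,j)$. Two facts drive the argument: first, since $\Pi$ covers $E(\widehat{\mathcal{G}})$ and every vertex of a pseudotree has at most one in-neighbor, the $|\widehat{\mathcal{P}}_j|$ edges into $j$ lie in $|\widehat{\mathcal{P}}_j|$ \emph{distinct} pseudotrees; second, inside each such pseudotree the root reaches $p$ along a directed path. If all these pseudotrees are among $\mathcal{T}_1,\dots,\mathcal{T}_{|\mathcal{U}|}$, their roots are the stimulated vertices $\tau_k\in\mathcal{U}$, and I would assemble the root-to-$p$ paths into $|\widehat{\mathcal{P}}_j|$ paths from $\mathcal{U}$ to $\widehat{\mathcal{P}}_j$. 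Otherwise some edge $(p,j)$ sits in a pseudotree $\mathcal{T}_\ell$ with $\ell>|\mathcal{U}|$, whence $j\in V(\mathcal{T}_\ell)$ and the path condition at $j$ is assumed outright. Either way the condition holds at $j$, and Lemma~\ref{lem:genericid} then gives identifiability.

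For the ``only if'' direction, I would start from generic identifiability, so the path condition already holds at every $j\in V(\mathcal{G})$, and simply produce a witnessing covering using the maximal star-tree covering from the proof of Lemma~\ref{lem:treesmost}: one star rooted at each non-sink vertex, with its out-neighbors as leaves. Each stimulated vertex roots its own star, so I can reindex to place $\mathcal{T}_k$ as the star at $\tau_k$ for $k\le|\mathcal{U}|$ and collect the remaining stars as $\mathcal{T}_{|\mathcal{U}|+1},\dots,\mathcal{T}_n$ with $n=|V(\widehat{\mathcal{G}})\setminus\mathcal{S}_{\mathrm{in}}(\widehat{\mathcal{G}})|\ge|\mathcal{U}|$; then $\tau_k\in\Upsilon(\mathcal{T}_k)$ holds by construction and the required path condition on the later pseudotrees is merely a restriction of the condition that already holds everywhere.

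The main obstacle is the vertex-disjointness bookkeeping in the ``if'' direction: I must certify that root-to-in-neighbor paths taken from different pseudotrees never meet. This rests on the observation that every vertex on such a path is a \emph{non-leaf} of its pseudotree --- in particular the terminal vertex $p$, which carries the outgoing edge $(p,j)$ --- so that the property ``disjoint pseudotrees share no root or internal vertex'' (a consequence of condition~2 of Definition~\ref{def:disjointtree}, as already observed for pairs of disjoint pseudotrees) forbids any collision, while shared leaves are harmless since the paths terminate at non-leaf in-neighbors. A minor separate point to settle is a stimulated vertex that happens to be a sink of $\widehat{\mathcal{G}}$: it cannot root a pseudotree, but exciting it is useless (it is an in-neighbor of no vertex), so it may be excised from $\mathcal{U}$ without affecting identifiability or the statement.
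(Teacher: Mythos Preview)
Your proposal is correct and follows essentially the same route as the paper's proof: both directions are reduced to the path condition of Lemma~\ref{lem:genericid}, the ``if'' part splits on whether the incoming edges of $j$ all lie in the first $|\mathcal{U}|$ pseudotrees (equivalently, whether $j\in\bar{\mathcal V}:=V(\mathcal{T}_{|\mathcal{U}|+1})\cup\cdots\cup V(\mathcal{T}_n)$), and the ``only if'' part exhibits the maximal star covering of Lemma~\ref{lem:treesmost}. Your treatment is in fact a bit more careful than the paper's on two points---the explicit non-leaf argument for vertex-disjointness and the handling of stimulated sinks---but the underlying strategy is identical.
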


	\begin{proof}
		We first prove the `if' statement.  Let $\Pi = \{\mathcal{T}_1, \mathcal{T}_2, ..., \mathcal{T}_n\}$, with $n > |\mathcal{U}|$, be a set of pseudotrees that cover all the parameterized edges in $\widehat{\mathcal G}$. Note that the disjointness of the pseudotrees in Definition~\ref{def:disjointtree} implies that 
		the paths in different disjoint pseudotrees are vertex-disjoint, if they have no common starting
		or ending nodes, and, for any vertex $j \in V(\widehat{\mathcal G})$, all the edges incident from the vertices in $\widehat{\mathcal{P}}_j$ to $j$ should belong to distinct pseudotrees. Furthermore, any two disjoint pseudotrees cannot share common root nodes, and thus $\tau_i \ne \tau_j$, for all $i\ne j$.
		Consequently, the above properties of disjoint pseduotrees yield that
		there exist   $|\widehat{\mathcal{P}}_j|$ vertex-disjoint paths from $\{\tau_1, \tau_2, ..., \tau_{n}\}$ to $\widehat{\mathcal{P}}_j$. Define 
		$\bar{\mathcal{V}}: = V(\mathcal{T}_{|\mathcal{U}|+1})\cup \cdots \cup V(\mathcal{T}_{n})$ such that
		 all the in-coming edges of each vertex $j \in V(\widehat{\mathcal G}) \setminus \bar{\mathcal{V}}$ belong to distinct pseudotrees, and there always exist at least $|\widehat{\mathcal{P}}_j|$ vertex-disjoint paths from $\mathcal{U}$ to $\widehat{\mathcal{P}}_j$.
		Since each $\tau_i$, which is a root of the pseudotree $\mathcal{T}_k$, $k = 1,2,...,|\mathcal{U}|$, is chosen as stimulated vertex affected by an independent stimulation source, namely, either a white noise or a designed external excitation signal, then the equation \eqref{eq:maxpath} holds for all vertex $j \in V(\widehat{\mathcal G}) \setminus \bar{\mathcal{V}}$.
		For the rest of vertices in the set $\bar{\mathcal{V}}$, \eqref{eq:maxpath} is also satisfied due to $b_{\mathcal{U} \rightarrow \widehat{\mathcal{P}}_j} = |\widehat{\mathcal{P}}_j|$, $\forall~j \in \bar{V}$.
		It then follows from Lemma~\ref{lem:genericid} that the network model set $\mathcal{M}$ is generically identifiable. 
		
%
	
 Next, the `only if' statement is proven. Let the network model set $\mathcal{M}$ be generically identifiable, and we will show that a disjoint pseudotree covering exists and satisfies the condition in this theorem. It is obtained from Lemma~\ref{lem:treesmost} that we can always find a disjoint pseudotree covering of $E(\widehat{\mathcal{G}})$, denoted by $\Pi = \{\mathcal{T}_1, \mathcal{T}_2, ..., \mathcal{T}_{n}\}$, with $n = |V(\mathcal{G}) \setminus {\mathcal{S}_\mathrm{in}}(\mathcal{G})|$, where each pseudotree is only composed of a node as its root and all its out-neighbors as leaves. As the nodes in $\{1, 2, ..., |\mathcal{U}|\}$ are excited by external signals, we have a set of pseudotrees $\Pi_r \subseteq \Pi$, with $|\Pi_r| = |\mathcal{U}|$, in which every pseudotree has its root excited.  Then, we only need to prove that  the path condition $b_{\mathcal{U} \rightarrow \widehat{\mathcal{P}}_j} = |\widehat{\mathcal{P}}_j|$ holds, for every node $j \in \Pi \setminus \Pi_r$.
	This is guaranteed by generic identifiability of $\mathcal{M}$ from Lemma~\ref{lem:genericid}.
		
		That completes the proof.
	\end{proof}
 Following Theorem~\ref{thm:treecover}, a sufficient condition for generic identifiability can be obtained.
	\begin{coro} \label{coro:treecover}
		Consider a  network model set $\mathcal{M}$ defined in \eqref{eq:modset}, which satisfies the conditions of Lemma \ref{lem1} and Assumptions~\ref{ass2} and \ref{ass3}.
		Let $\widehat{\mathcal{G}}$ be its extended graph, with the set of parameterized edges $E(\widehat{\mathcal{G}})$.
		Then, the network model set $\mathcal{M}$ is generically identifiable from $(r, w)$ \textbf{if}  
        there exists a set of disjoint pseudotrees covering all the elements in $E(\widehat{\mathcal{G}})$, and each pseudotree has at least one root vertex being excited.
	\end{coro}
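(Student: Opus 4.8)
The plan is to obtain this corollary as the special case of the \textbf{if} direction of Theorem~\ref{thm:treecover} in which the covering contains no ``surplus'' pseudotree: since every pseudotree of the covering already carries an excited root, the surplus vertex set $\bar{\mathcal{V}} = V(\mathcal{T}_{|\mathcal{U}|+1})\cup\cdots\cup V(\mathcal{T}_n)$ is empty and the auxiliary path condition of Theorem~\ref{thm:treecover} is vacuous. First I would take the disjoint pseudotree covering $\Pi = \{\mathcal{T}_1,\dots,\mathcal{T}_m\}$ supplied by the hypothesis and observe that, because two disjoint pseudotrees cannot share a common root (a consequence of condition~2 of Definition~\ref{def:disjointtree}), the excited roots chosen in the $m$ pseudotrees are pairwise distinct stimulated vertices, so that $m \leq |\mathcal{U}|$. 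Equivalently, I may bypass Theorem~\ref{thm:treecover} and drive the argument straight through Lemma~\ref{lem:genericid}, which I expect to be the cleaner route.

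The central step is to verify the path equality \eqref{eq:maxpath} for every $j \in V(\mathcal{G})$. Fixing $j$, each in-neighbour $i \in \widehat{\mathcal{P}}_j$ carries the edge $(i,j)$, and by condition~2 of Definition~\ref{def:disjointtree} these $|\widehat{\mathcal{P}}_j|$ incoming edges lie in pairwise distinct pseudotrees of $\Pi$. In the pseudotree holding $(i,j)$ there is, by the root property, a directed path from its excited root down to $i$. I would then argue that the collection of these root-to-$i$ paths is mutually vertex-disjoint: any vertex shared by two of our pseudotrees has, by condition~2, all of its outgoing edges inside a single pseudotree, so it cannot be an interior vertex of two paths at once; and since each terminal vertex $i$ itself owns the outgoing edge $(i,j)$ in its own pseudotree, it cannot coincide with an interior vertex of another path either. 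Concatenating each path with its edge $(i,j)$ yields $|\widehat{\mathcal{P}}_j|$ vertex-disjoint paths from $\mathcal{U}$ to $\widehat{\mathcal{P}}_j$, whence $b_{\mathcal{U}\rightarrow\widehat{\mathcal{P}}_j} = |\widehat{\mathcal{P}}_j|$. Since this holds for every $j$, Lemma~\ref{lem:genericid} delivers generic identifiability.

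The step I expect to be the real crux is precisely the vertex-disjointness of these root-to-in-neighbour paths, since disjoint pseudotrees are permitted to share leaf vertices; the argument rests entirely on condition~2 of Definition~\ref{def:disjointtree} forbidding any vertex from having outgoing edges in two distinct pseudotrees, which is exactly the pseudotree/vertex-disjoint-path correspondence already exploited in the \textbf{if} part of Theorem~\ref{thm:treecover}. The only remaining, purely bookkeeping, matter is the index count: Theorem~\ref{thm:treecover} is phrased for a covering of $n \geq |\mathcal{U}|$ pseudotrees with $\tau_k \in \Upsilon(\mathcal{T}_k)$, whereas the corollary's covering may use $m \leq |\mathcal{U}|$ pseudotrees. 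I would dispose of this either by invoking Lemma~\ref{lem:treemore} to split the covering into additional disjoint pseudotrees until the count reaches $|\mathcal{U}|$ (a refinement that removes no covered edge and invalidates none of the equalities \eqref{eq:maxpath} already established), or, more economically, by appealing directly to Lemma~\ref{lem:genericid} through the construction above, sidestepping the indexing of Theorem~\ref{thm:treecover} altogether.
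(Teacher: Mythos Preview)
Your proposal is correct and matches the paper's own argument: the paper observes that the hypothesis forces $n\leq|\mathcal{U}|$, invokes Lemma~\ref{lem:treemore} to refine the covering to $\tilde n\geq|\mathcal{U}|$ disjoint pseudotrees, and then defers to the reasoning of Theorem~\ref{thm:treecover}. Your alternative direct route through Lemma~\ref{lem:genericid} is simply the ``if'' half of Theorem~\ref{thm:treecover} specialised to an empty surplus set $\bar{\mathcal{V}}$, so either path you outline is in line with the paper.
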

	\begin{proof}
	    The condition in this corollary implies that the cardinality of the covering $n$ is less than or equal to $|\mathcal{U}|$ defined in Theorem~\ref{thm:treecover}. It then follows from Lemma~\ref{lem:treemore} that if $E(\widehat{\mathcal{G}})$ can be covered a set of $n$ disjoint pseudotrees, we can construct $\tilde{n}$ disjoint pseudotrees, where $\tilde{n} > |\mathcal{U}|$, to cover $E(\widehat{\mathcal{G}})$.  As a result, the proof can be proceeded following a similar reasoning as Theorem~\ref{thm:treecover} and therefore is omitted here.
	\end{proof}
	
	The condition in Corollary~\ref{coro:treecover} requires that in a given disjoint pseudotree covering of $E(\widehat{\mathcal{G}})$, one of the roots of each pseudotree is a stimulated vertex. This condition is  sufficient for generic identifiability. The condition in Theorem~\ref{thm:treecover} is needed when we have more disjoint pseudotrees in a covering than the number of stimulated vertices in $\widehat{\mathcal{G}}$. In this case, only a partial number of pseudotrees contains stimulated vertices in their roots, while the vertices in the remaining set of pseudotrees need to satisfy the path condition in \eqref{eq:maxpath}, which requires based on the full topology of $\widehat{\mathcal{G}}$.

	Compared to Lemma~\ref{lem:genericid}, Theorem~\ref{thm:treecover} and Corollary~\ref{coro:treecover} provide more integrated conditions for characterizing the generic identifiability. The major advantage of this pseudotree covering condition in Corollary~\ref{coro:treecover} over the path-based conditions in e.g. \cite{hendrickx2018identifiability,weerts2018single} is that, rather than providing a vertex-wise analysis, it has the potential for the synthesis problem we are interested in. Particularly, combining with Theorem~\ref{thm:treecover}, we obtain a useful tool for allocating the minimal number of excitation signals to achieve the generic identifiability of the overall network.

	\begin{coro} \label{lem:bound}
		The minimal number $K$ of external excitation signals that guarantees the generic identifiability of a directed network model set $\mathcal{M}$ is bounded as
		\begin{equation} \label{eq:bound}
		\max\left\{|\mathcal{S}_\mathrm{ou}(\widehat{\mathcal{G}})|, \max_{j \in V(\widehat{\mathcal{G}})} {|\widehat{\mathcal{P}}_j|}\right\}
		- p
		\leq K
		\leq \kappa(\widehat{\mathcal{G}}) - p,
		\end{equation}
		where $\kappa(\widehat{\mathcal{G}})$ is the minimal number of disjoint pseudotrees that cover all the edges of $\widehat{\mathcal{G}}$.
	\end{coro}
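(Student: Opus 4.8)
The plan is to establish the two inequalities separately, using the path-condition characterization of Lemma~\ref{lem:genericid} for the (necessary) lower bound and the sufficient covering condition of Corollary~\ref{coro:treecover} for the upper bound. Throughout I would exploit the identity $|\mathcal{U}| = K + p$, i.e.\ the stimulated vertices of $\widehat{\mathcal{G}}$ are the $K$ vertices driven by the external excitations $r$ together with the $p$ vertices driven by the white-noise components $e$.

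For the lower bound I would show that each term inside the maximum is itself a lower bound for $|\mathcal{U}| = K+p$. First, generic identifiability forces $b_{\mathcal{U} \rightarrow \widehat{\mathcal{P}}_j} = |\widehat{\mathcal{P}}_j|$ for every $j$ by Lemma~\ref{lem:genericid}; since any family of mutually vertex-disjoint paths leaving $\mathcal{U}$ uses distinct start vertices, we have $b_{\mathcal{U} \rightarrow \widehat{\mathcal{P}}_j} \le |\mathcal{U}|$, hence $|\widehat{\mathcal{P}}_j| \le K + p$ for all $j$ and therefore $\max_j |\widehat{\mathcal{P}}_j| - p \le K$. Second, I would argue that every source of $\widehat{\mathcal{G}}$ must be stimulated: a source $s$ carrying a parameterized edge has an out-neighbour $j$, so $s \in \widehat{\mathcal{P}}_j$, and because $s$ has no in-edges the only vertex-disjoint path from $\mathcal{U}$ that can terminate at $s$ is the trivial length-$0$ path, which exists only if $s \in \mathcal{U}$; thus the path condition at $j$ fails unless $s \in \mathcal{U}$. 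This gives $\mathcal{S}_\mathrm{ou}(\widehat{\mathcal{G}}) \subseteq \mathcal{U}$ and hence $|\mathcal{S}_\mathrm{ou}(\widehat{\mathcal{G}})| - p \le K$. Taking the maximum of the two yields the left inequality.

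For the upper bound I would invoke Corollary~\ref{coro:treecover}: it suffices to exhibit a disjoint pseudotree covering of $E(\widehat{\mathcal{G}})$ in which every pseudotree carries a stimulated root. I would fix a covering attaining the minimum cardinality $\kappa(\widehat{\mathcal{G}})$ and then account for the roots that are excited ``for free'' by noise. The $p - p_0$ vertices of $\widehat{V}$ are sources of $\widehat{\mathcal{G}}$; since all out-edges of a vertex lie in a single tree (disjointness) and a source cannot be an internal vertex or a leaf, each such vertex is the unique root of a distinct noise-driven tree. Placing one external excitation at the root of every remaining pseudotree then renders the covering admissible, costing at most $\kappa(\widehat{\mathcal{G}}) - (p - p_0)$ placements; using the $p_0$ nonparameterized noise inputs to supply $p_0$ further excited roots reduces this to $\kappa(\widehat{\mathcal{G}}) - p$, giving $K \le \kappa(\widehat{\mathcal{G}}) - p$.

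The main obstacle I anticipate is exactly this last bookkeeping step. That the $p-p_0$ vertices of $\widehat{V}$ occupy distinct trees is immediate, but matching the $p_0$ nonparameterized noise inputs to $p_0$ \emph{additional, distinct} roots of the minimal covering is delicate: by Assumption~\ref{ass3} such an input enters a single interior vertex $j \in \mathcal{V}$ that need not be a source, hence need not be a root of the chosen minimal covering. The crux is therefore to argue that the minimal covering may always be selected so that these $p_0$ vertices appear as roots without increasing the covering number beyond $\kappa(\widehat{\mathcal{G}})$; I would attempt this by a local re-rooting argument that reassigns the unique incoming edge of such a vertex to a neighbouring tree, in the spirit of the splitting construction used in Lemma~\ref{lem:treemore}. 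Should this re-rooting fail to be free of charge in degenerate configurations, the honest fallback would be the weaker estimate $K \le \kappa(\widehat{\mathcal{G}}) - (p - p_0)$, so pinning down the sharper constant $p$ is the genuine content of the upper inequality.
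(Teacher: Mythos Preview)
Your lower-bound argument is exactly the paper's, only written out in full: the paper simply cites Lemma~\ref{lem:genericid} and says the lower bound is ``the necessary number of external excitation signals that are required for the sources and the other vertices,'' which is precisely your two-part argument that $\max_j|\widehat{\mathcal P}_j|\le|\mathcal U|$ and $\mathcal S_{\mathrm{ou}}(\widehat{\mathcal G})\subseteq\mathcal U$.

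For the upper bound the paper's proof is a single line: ``apply Theorem~\ref{thm:treecover}; it suffices to assign an independent external signal to a root of each pseudotree.'' Taken literally that yields only $K\le\kappa(\widehat{\mathcal G})$; the subtraction of $p$ is left implicit. So you are already arguing more carefully than the paper does, and the obstacle you flag is genuine rather than an artefact of your approach.

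In fact the re-rooting you propose cannot always be carried out without increasing the covering number. Take $L=3$ with the chain $1\to 2\to 3$, all modules parameterized, and a single noise component entering vertex~$2$ through a nonparameterized scalar $H$ (so $p=p_0=1$). Then $\widehat{\mathcal G}=\mathcal G$, the unique minimal covering is the single tree rooted at vertex~$1$, hence $\kappa(\widehat{\mathcal G})=1$; yet generic identifiability forces vertex~$1$ to be stimulated (it is a source and the unique in-neighbour of vertex~$2$), and the noise sits at vertex~$2$, not at a root. Thus $K=1$, while $\kappa(\widehat{\mathcal G})-p=0$. No re-rooting of a one-tree covering can place the root at vertex~$2$.

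What \emph{does} follow from Corollary~\ref{coro:treecover} exactly along your lines is the weaker bound $K\le\kappa(\widehat{\mathcal G})-(p-p_0)$: the $p-p_0$ vertices of $\widehat V$ are sources of $\widehat{\mathcal G}$, hence each is necessarily the root of its own (tree-shaped) pseudotree in any disjoint covering, and these are automatically distinct. The paper's later Corollary~\ref{coro:Pis} and Algorithm~\ref{alg:assign} are consistent with this reading (they only remove from $\Pi$ the trees actually rooted at noise-stimulated vertices), so the discrepancy is confined to the constant in the statement of Corollary~\ref{lem:bound}. Your ``honest fallback'' is therefore the correct conclusion; the sharper $-p$ in the stated corollary does not follow from the argument the paper gives.
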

	\begin{proof}
		The lower bound is obtained immediately from Lemma~\ref{lem:genericid} as a necessary number of external excitation signals that are required for the sources and the other vertices. The upper bound then results from applying Theorem~\ref{thm:treecover}, and it suffices to assign an independent external signal to a root of each pseudotree to achieve generic identifiability.
	\end{proof}
	
	
	The upper bound in \eqref{eq:bound} plays a central role in this paper since it directly implies that solving the synthesis problem amounts to finding the minimal number of disjoint pseudotrees in the network that cover all the parameterized edges in $E(\widehat{\mathcal{G}})$. At this point, we relate the synthesis problem to a combinatorial optimization problem.
	
	\begin{exm}
		Consider the five-vertex network in Fig.~\ref{fig:5nodenet}, and we find that the parameterized edges of the extended graph in Fig.~\ref{fig:ex5nodenet} can be covered by five disjoint pseudotrees as shown in Fig.~\ref{fig:ex5decomp}. Observe that there is a unique stimulated vertex in each pseudotree, which is a root. Thus, the condition  in Theorem~\ref{thm:treecover} is satisfied, and we  conclude that the dynamic network model set $\mathcal{M}$ in Example~\ref{exam1} is generically identifiable.
		\begin{figure}[!tp]\centering
			\includegraphics[scale=.33]{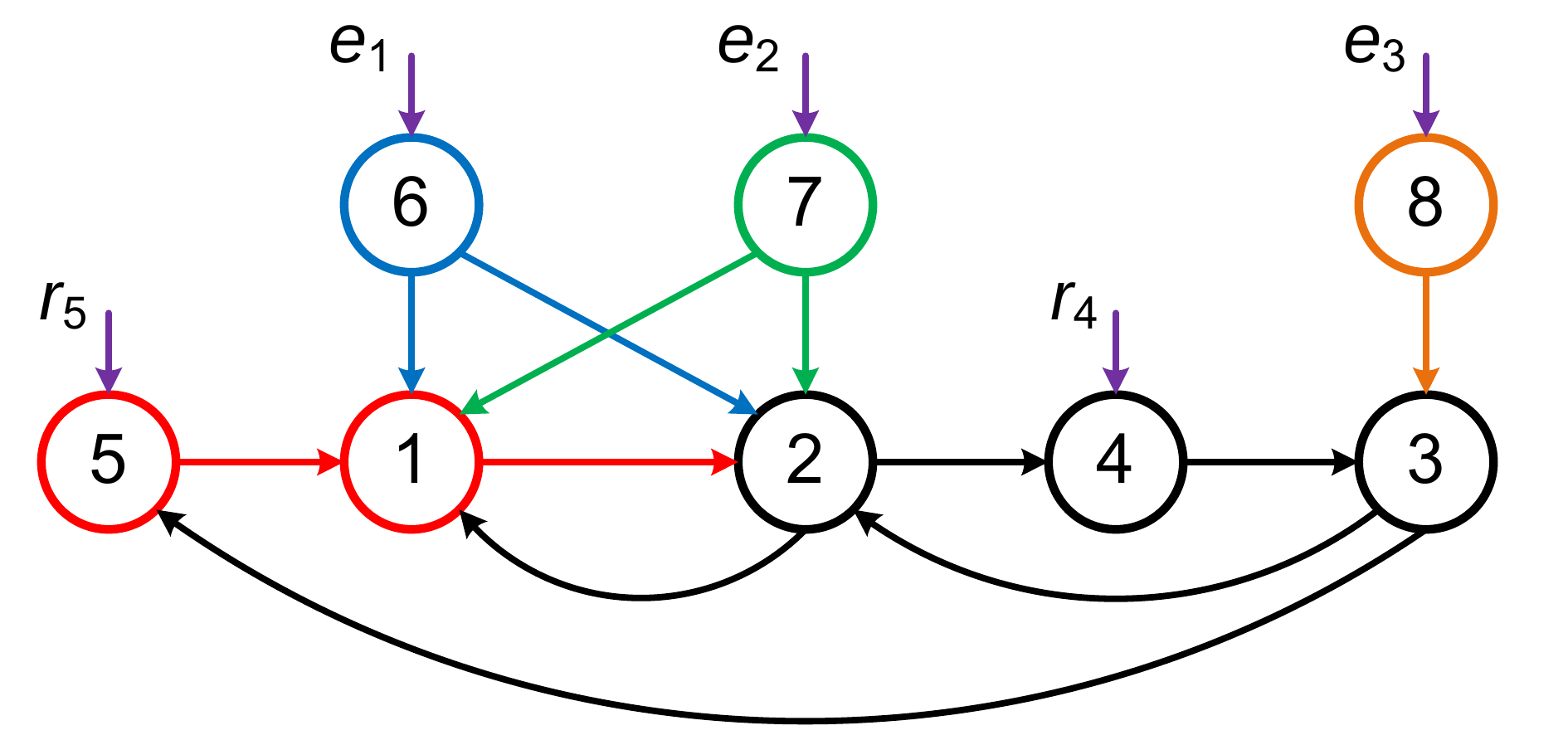}
			\centering
			\caption{The extended graph in 	Fig.~\ref{fig:ex5nodenet} is decomposed into 5 disjoint pseudotrees, which are highlighted with different colors. Since all the parameterized edges are covered, and each stimulated vertex is located at a root of each pseudotree, the network in Fig.~\ref{fig:5nodenet} is generically identifiable.}
			\label{fig:ex5decomp}
		\end{figure}
		\begin{figure}[!tp]\centering	
			\includegraphics[scale=.33]{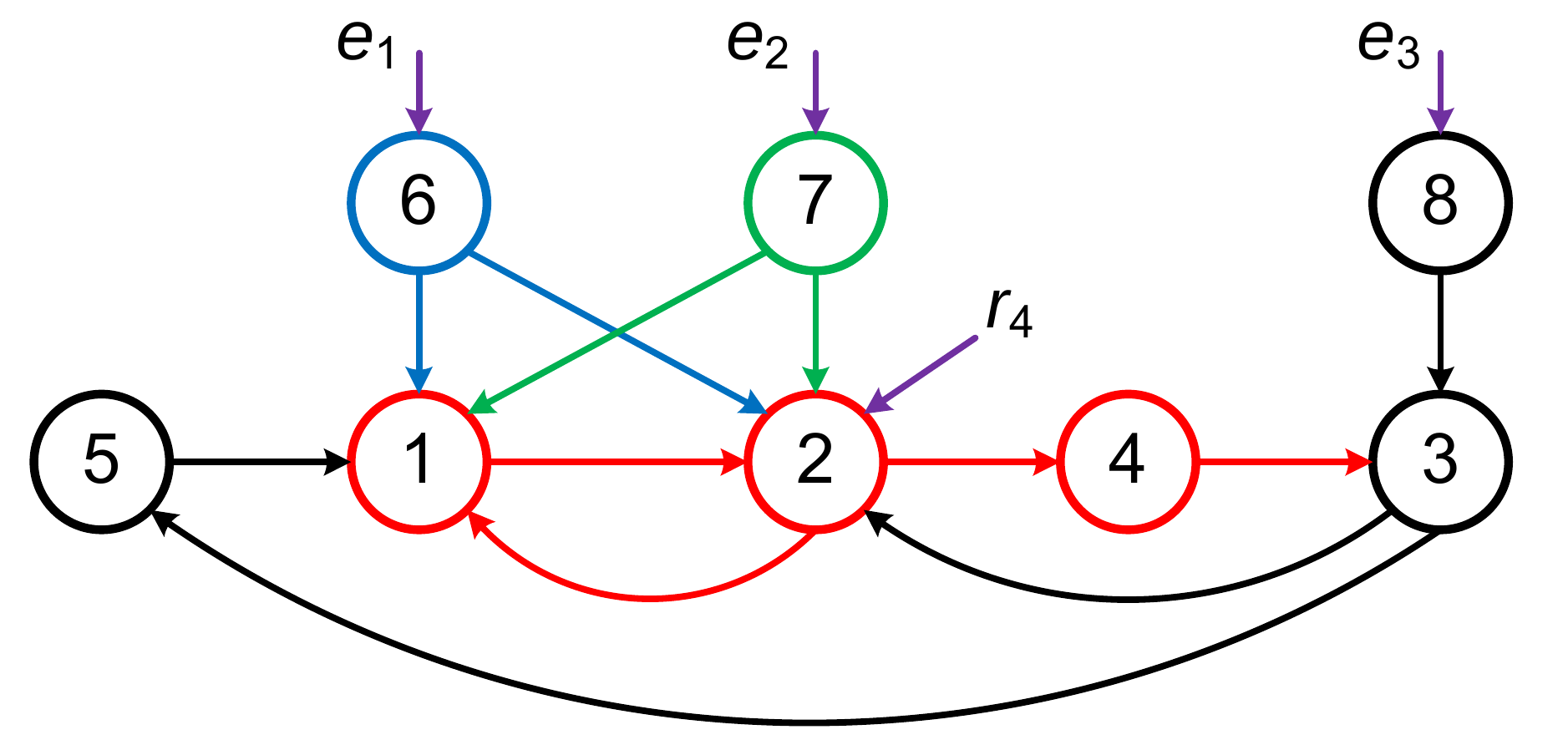}
			\centering
			\caption{Four disjoint pseudotrees are needed to cover all the parameterized edges of the extended graph in Fig.~\ref{fig:ex5nodenet}. Thus, in addition to the white noise excitatons $e_1$, $e_2$ and $e_3$, only one external excitation signal is required to achieve generic identifiability of the network in Fig.~\ref{fig:5nodenet}, and assigning this excitation signal to either vertex 1 or 2 will lead to this result.}
			\label{fig:ex5decomp1}
		\end{figure}
	\end{exm}

	For a simple network consisting of only a few vertices, e.g., Fig.~\ref{fig:5nodenet}, we may immediately obtain the minimal number of excitation signals and their locations such that generic identifiability is achieved, see Fig.~\ref{fig:ex5decomp1}.  However, when a more complicated graph is considered, a systematic approach is required to decompose a graph into a minimal number of disjoint pseudotrees. Thus, in the next section, we focus on an algorithmic procedure to tackle this combinatorial problem.

	\subsection{Excitation Allocation: A Pseudotree Merging Approach}
	
	In this section, we aim to solve an excitation allocation problem, which aims for a minimal number of external excitation signals which are used to guarantee generic identifiability of a network model set.
To this end, a two-step scheme is developed, where the steps correspond to the conditions in Corollary~\ref{coro:treecover} and Theorem~\ref{thm:treecover}, respectively. In the first step, we devise a heuristic method to find a minimum number of disjoint pseudotrees covering all the edges of the extended graph $\widehat{\mathcal G}$. Then, the second step is to allocate excitation signals at the roots of some selected pseudotrees in the covering such that generic identifiability is achieved. Hereafter, we present the detailed implementation for the two steps.

	
\subsubsection{Pseudotree Covering}	
According to \eqref{eq:bound}, the smallest number of disjoint pseudotrees that can be found to cover all of the edges
potentially induces the smallest number of  excitation signals that is required to identify all the modules. Based on this argument, we tackle the following graph-theoretical problem as the first step: \textit{Given a directed graph $\widehat{\mathcal{G}}$, find a set of disjoint pseudotree covering $\Pi = \{\mathcal{T}_1, \mathcal{T}_2, ..., \mathcal{T}_n\}$ such that all the edges of $\widehat{\mathcal{G}}$ are covered by $\Pi$, and $|\Pi|$ is minimized}.

	To efficiently solve this minimal covering problem, we devise a graph merging algorithm.  Lemma \ref{lem:treesmost} indicates that for any directed graph $\widehat{\mathcal{G}}$, we can always find a disjoint minimal pseudotree covering,
	\begin{equation} \label{eq:Pi0}
	\Pi_0 = \{\mathcal{T}_1^{(0)}, \mathcal{T}_2^{(0)}, ..., \mathcal{T}_{|\Pi_0|}^{(0)}\},
	\end{equation}
	where each minimal pseudotree is rooted at a vertex in $V(\widehat{\mathcal{G}}) \setminus {\mathcal{S}_\mathrm{in}}(\widehat{\mathcal{G}})$, with ${\mathcal{S}_\mathrm{in}}$ the set of the sinks of $\widehat{\mathcal{G}}$. Here, $|\Pi_0| =   |V(\widehat{\mathcal{G}})| - |{\mathcal{S}_\mathrm{in}}(\widehat{\mathcal{G}})|$. In other words, each vertex, besides the sinks, is the root of its own pseudotree, consisting of all links that connect the vertex itself to all of its out-neighbors. The proposed approach starts with $\Pi_0$ as the initial disjoint pseudotree covering, and we then implement a specific strategy to recursively merge the pseudotrees until there are no mergeable pseudotrees in a covering.

	As a relevant and necessary concept, the \textit{mergeability} of pseudotrees is defined as follows.
	\begin{defn}[Mergeability]
		\label{def:Mergeability}
		Consider two disjoint pseudotrees $\mathcal{T}_1$ and $\mathcal{T}_2$ and $V(\mathcal T_1) \cap V(\mathcal T_2) \ne \emptyset$. We say $\mathcal{T}_1$ is mergeable to $\mathcal{T}_2$, if
		\begin{enumerate}
			\item the union of $\mathcal{T}_1$ and $\mathcal{T}_2$, i.e., $(V(\mathcal{T}_1) \cup V(\mathcal{T}_2), E(\mathcal{T}_1) \cup E(\mathcal{T}_2))$ is also a pseudotree,
			
			\item and
			there is a directed path from every vertex $i \in \Upsilon(\mathcal{T}_2)$ to every vertex $j \in V(\mathcal{T}_1)$.
			\\
		\end{enumerate}
	\end{defn}
If $\mathcal{T}_1$ is mergeable to $\mathcal{T}_2$ then the roots of $\mathcal{T}_2$ remain the roots of the merged pseudotree.
	The mergeability of a pseudotree $\mathcal{T}_1$ to $\mathcal{T}_2$ requires that $\mathcal{T}_1$ and $\mathcal{T}_2$ do not share any common leaf and internal vertex. As a result, merging $\mathcal{T}_1$ and $\mathcal{T}_2$ yields a new pseudotree $\mathcal{T}_3$, where $\Upsilon(\mathcal{T}_3) \supseteq \Upsilon(\mathcal{T}_2)$.
	Note that $\mathcal{T}_1$ being mergeable to $\mathcal{T}_2$ does not necessarily mean that $\mathcal{T}_2$ is also mergeable to $\mathcal{T}_1$.
	Next we introduce an algebraic characterization of a given disjoint pseudotree covering, that will be  instrumental in our follow-up merging approach.
	\begin{defn} \label{def:MergeMat}
		Denote a set
		\begin{equation}
			\mathbb{M} = \{1, 0, \varnothing\}.
		\end{equation}
		Let $\Pi = \{\mathcal{T}_1, \mathcal{T}_2, ..., \mathcal{T}_n\}$ be a disjoint pseudotree covering of a directed graph. The \textbf{characteristic matrix} of $\Pi$ is denoted by $\mathscr{M} \in \mathbb{M}^{n \times n}$, whose $(i,j)$-th entry is defined as
		\begin{equation} \label{eq:Mergeability}
		\mathscr{M}_{ij} = \begin{cases}
		1 & \text{if $\mathcal T_i$ is mergeable to $ \mathcal T_j$;} \\
		\varnothing &  \text{if $V(\mathcal T_j) \cap V(\mathcal T_i) = \emptyset$;}\\
		0 &  \text{otherwise.}\\
		\end{cases}
		\end{equation}
	\end{defn}

The characteristic matrix of the initial pseudotree covering $\Pi_0$ \eqref{eq:Pi0} is denoted by $\mathscr{M}^{(0)}$.
The relation between $\mathscr{M}^{(0)}$ and the adjacency matrix of $\widehat{\mathcal{G}}$ is now discussed. 	Let $A(\widehat{\mathcal{G}}) \in \mathbb{R}^ {(L+p)\times (L+p)}$ be the adjacency matrix of the directed graph $\widehat{\mathcal{G}}$ such that $[A(\widehat{\mathcal{G}})]_{{i}j} = 1$ if $({j},i) \in E (\widehat{\mathcal{G}})$, and $[A(\widehat{\mathcal{G}})]_{{i}j} = 0$ otherwise. Without loss of generality, we assume that $A(\widehat{\mathcal{G}})$ is permuted such that all zero columns corresponding to $\mathcal{S}_\mathrm{in}(\widehat{\mathcal{G}})$ are its last columns. Then, the following result holds.
\begin{lem}
Given a graph $\widehat{\mathcal{G}}$ with the adjacency matrix $A(\widehat{\mathcal{G}})$. Denote
\begin{equation*}
a_{ij} ={\left([A(\widehat{\mathcal{G}})+I\mathfrak{i}]_{\star i}\right)^\top} [A(\widehat{\mathcal{G}})+I \mathfrak{i}]_{\star j}, \label{eq:1}
\end{equation*}
where ${i}$, $j \in {1, 2, ..., |\Pi_0|}$, $\mathfrak{i}$ denotes the imaginary unit, and $[A(\widehat{\mathcal{G}})+I \mathfrak{i}]_{\star {i}}$ indicates the ${i}$-th column of $A(\widehat{\mathcal{G}})+I \mathfrak{i}$. The characteristic matrix $\mathscr{M}^{(0)}$ of $\Pi_0$ in \eqref{eq:Pi0} is formulated as follows: $\mathscr{M}^{(0)}_{ii} = 0$ for all ${i}$, while for $j\not={i}$:
\begin{equation*}
\mathscr{M}^{(0)}_{ij} = \begin{cases}
1, \quad \operatorname{Re}(a_{{i}j})=0,\text{ {and} }  \operatorname{Im}(a_{{i}j}) \not= 0, \text{ {and} }\\ \quad \quad [A(\widehat{\mathcal{G}})]_{{i}j} \not=0. \\
0, \quad \operatorname{Re}(a_{{i}j}) \not= 0 \text{ or } \{ \operatorname{Re}(a_{{i}j})=0, \text{ {and} }\\
\quad \quad \operatorname{Im}(a_{{i}j}) \not= 0, \text{ {and} } [A(\widehat{\mathcal{G}})]_{{i}j} =0 \}. \\
\varnothing, \quad a_{{i}j} = 0,
\end{cases} \label{eq:2}
\end{equation*}
where $\operatorname{Re}(\cdot)$ and $\operatorname{Im}(\cdot)$ denote the real and imaginary parts of a complex number.
\end{lem}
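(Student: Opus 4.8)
The plan is to compute the scalar $a_{ij}$ explicitly from the adjacency matrix and then read off its real and imaginary parts as graph quantities attached to the two minimal pseudotrees $\mathcal{T}_i^{(0)}$ and $\mathcal{T}_j^{(0)}$, which by construction have root $i$ (resp.\ $j$) and leaves $\mathcal{N}_i^+$ (resp.\ $\mathcal{N}_j^+$). Writing $A := A(\widehat{\mathcal{G}})$ and expanding $a_{ij} = ([A+I\mathfrak{i}]_{\star i})^\top [A+I\mathfrak{i}]_{\star j} = \sum_k (A_{ki}+\mathfrak{i}\delta_{ki})(A_{kj}+\mathfrak{i}\delta_{kj})$, the term $-\sum_k\delta_{ki}\delta_{kj}$ vanishes for $i\neq j$, the purely real term $\sum_k A_{ki}A_{kj}$ equals the number of common out-neighbours $|\mathcal{N}_i^+\cap\mathcal{N}_j^+|$, and the two mixed terms contribute $\mathfrak{i}(A_{ji}+A_{ij})$. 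Hence, for $i\neq j$, one has $\operatorname{Re}(a_{ij}) = |\mathcal{N}_i^+\cap\mathcal{N}_j^+|$ and $\operatorname{Im}(a_{ij}) = A_{ij}+A_{ji}$: the real part counts shared leaves, while the imaginary part counts the edges present between the roots $i$ and $j$ in either direction. The role of the imaginary unit on the diagonal is precisely to separate these two incidence counts into orthogonal components of a single complex inner product.

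With this dictionary in hand, I would settle the three possible values of $\mathscr{M}^{(0)}_{ij}$ case by case. First, $V(\mathcal{T}_i^{(0)})\cap V(\mathcal{T}_j^{(0)}) = \emptyset$ precisely when $i,j$ share no leaf and neither of the edges $(i,j),(j,i)$ is present, that is when $\operatorname{Re}(a_{ij}) = \operatorname{Im}(a_{ij}) = 0$, equivalently $a_{ij}=0$; this yields the $\varnothing$ entry. Next, when $\operatorname{Re}(a_{ij})\neq 0$ there is a common leaf $k\in\mathcal{N}_i^+\cap\mathcal{N}_j^+$, and since the edges of $\mathcal{T}_i^{(0)}\cup\mathcal{T}_j^{(0)}$ emanate only from the roots $i$ and $j$, this vertex $k$ receives an edge from each root and thus has in-degree two in the union; by Definition~\ref{def:pseudotree} the union then fails to be a pseudotree, so condition~1 of mergeability (Definition~\ref{def:Mergeability}) is violated and $\mathscr{M}^{(0)}_{ij}=0$. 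The key observation to record here is that a vertex of the union can acquire in-degree two \emph{only} by being a common out-neighbour, so $\operatorname{Re}(a_{ij})=0$ is exactly the condition for the union to be a pseudotree.

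It remains to treat $\operatorname{Re}(a_{ij})=0$ with $\operatorname{Im}(a_{ij})\neq 0$, where the union is a pseudotree and the entry is decided by condition~2 of mergeability, namely the existence of a directed path from the unique root $j$ of $\mathcal{T}_j^{(0)}$ to every vertex of $V(\mathcal{T}_i^{(0)}) = \{i\}\cup\mathcal{N}_i^+$. Since in the union only the two roots $i$ and $j$ carry outgoing edges, any path leaving $j$ either terminates at a leaf of $\mathcal{T}_j^{(0)}$ or passes through $i$; consequently $j$ reaches $i$, and then through $i$ all of $\mathcal{N}_i^+$, if and only if $i$ is an out-neighbour of $j$, i.e.\ $[A]_{ij}\neq 0$. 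Thus the path condition holds exactly when $[A]_{ij}\neq 0$, giving $\mathscr{M}^{(0)}_{ij}=1$, whereas the complementary subcase $[A]_{ij}=0$ (which forces $[A]_{ji}\neq 0$ so that $\operatorname{Im}(a_{ij})\neq 0$ is retained) yields $\mathscr{M}^{(0)}_{ij}=0$. The diagonal is $0$ by the convention that a pseudotree is not mergeable to itself. I expect the main obstacle to be the bookkeeping in this last case: one must argue carefully, using that minimal pseudotrees carry out-edges only at their roots, both that the union is always a pseudotree here and that reachability from $j$ collapses to the single question of whether the edge $(j,i)$ exists — this is exactly what produces the asymmetry between $\mathscr{M}^{(0)}_{ij}$ and $\mathscr{M}^{(0)}_{ji}$ already anticipated after Definition~\ref{def:Mergeability}.
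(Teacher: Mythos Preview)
Your proposal is correct and follows essentially the same approach as the paper's proof: both identify $\operatorname{Re}(a_{ij})$ with the number of common out-neighbours of $i$ and $j$ and $\operatorname{Im}(a_{ij})$ with $A_{ij}+A_{ji}$, then verify the three cases of $\mathscr{M}^{(0)}_{ij}$ against the definitions of disjoint vertex sets and mergeability. Your write-up is in fact more explicit than the paper's, since you expand $a_{ij}$ term by term and spell out why reachability of $V(\mathcal{T}_i^{(0)})$ from $j$ in the union collapses to the single condition $A_{ij}\neq 0$, whereas the paper states these equivalences more tersely.
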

\begin{proof}
The matrix $\mathscr{M}^{(0)}$ is of the size $|\Pi_0| \times |\Pi_0|$, and its $k$-th row or column corresponds 
to the pseudotree $\mathcal{T}_k$, which consists of the $k$-th vertex in $\widehat{\mathcal{G}}$ as the root and all the out-neighbors of the $k$-th vertex.
Since vertex $i$ cannot be merged to vertex $i$, it is obvious that $\mathscr{M}^{(0)}_{ii}=0$.
	
The condition $V(\mathcal T_j) \cap V(\mathcal T_i) = \emptyset$ in (\ref{eq:Mergeability}) is equivalent to the situation that (i) there is no directed edge between $j$ and $i$ (in either direction), and (ii) nodes $i$ and $j$ do not share any out-neighbors in $\widehat{\mathcal{G}}$. Note that condition (i) is equivalent to $[A(\widehat{\mathcal{G}})]_{ij} = [A(\widehat{\mathcal{G}})]_{ji}=0$, and that according to condition (ii) there does not exist a node $k$ such that $[A(\widehat{\mathcal{G}})]_{kj}\neq 0$ and $[A(\widehat{\mathcal{G}})]_{ki}\neq 0$, for all $k \neq i,j$. For $i \neq j$ is follows then that conditions (i) and (ii) are equivalent to $a_{ij}=0$, showing that in this situation $\mathscr{M}^{(0)}_{ij}=\varnothing$.

For the minimal pseudotree covering $\Pi_0$, $\mathcal{T}_i$ is mergeable to $\mathcal{T}_j$ if nodes $i$ and $j$ do not share a common out-neighbor, and if there exists a directed edge from node $j$ to node $i$. The case $\{\operatorname{Re}(a_{ij})=0$ and $\operatorname{Im}(a_{ij}) \not= 0\}$ represents the situation that nodes $i$ and $j$ do not have a common out-neighbor, while there exists a directed edge between $i$ and $j$ (in either direction). For mergeability of $\mathcal{T}_i$ into $\mathcal{T}_j$ a directed edge needs to be present from node $j$ to node $i$, which is guaranteed by the additional requirement that $[A(\widehat{\mathcal{G}})]_{ij}\neq 0$. This proves the
situation $\mathscr{M}^{(0)}_{ij}=1$.
The situation $\mathscr{M}^{(0)}_{ij} = 0$ appears in the remaining cases.
\end{proof}

	Having the characteristic matrix of $\Pi_0$, the following notations and operators are defined to merge the initial  pseudotrees. Define $\mathscr{M} \in \mathbb{M}^{|\Pi| \times |\Pi|}$, and let $\mathscr{M}_{i\star}$ and $ \mathscr{M}_{\star j}$ be the $i$-th row and $j$-th column of a matrix $\mathscr{M} \in \mathbb{M}$, where $\Pi$ is a disjoint pseudotree covering. To feature the merging of two pseudotrees from  an algebraic point of view, we define a commutative operator
	\begin{equation}\label{eq:operation}
	c = a \odot b = b \odot a,
	\end{equation}
	with $a ,b, c \in \mathbb{M}$, which follows the rules:
	\begin{align} \label{eq:rule}
	&    1 \odot 1 = 1, \ 1 \odot 0 = 0, \ 1 \odot \varnothing = 1,  \nonumber\\
	&    0 \odot 0 = 0, \ \varnothing \odot 0 = 0, \ \varnothing \odot \varnothing = \varnothing.
	\end{align}
	Furthermore, we also extend this above operators to vectors in $\mathbb{M}^n$. Let $\rho, \mu \in \mathbb{M}^n$ be two column (or row) vectors. Then,  $\rho \odot \mu = \mu \odot \rho$ stands for an entrywise operator that returns a new column (or row) vector, whose $i$-th element is given by $\rho_i \odot \mu_i$. For a given disjoint pseudotree covering $\Pi$ with $|\Pi| = n$ and a set $\mathbb{N}: = \{1 ,2, ..., n\}$, we then define the following function
	\begin{equation}  \label{eq:func}
	\mathscr{F}: \mathbb{M}^{n\times n} \times \mathbb{N} \times \mathbb{N} \rightarrow \mathbb{M}^{(n-1)\times (n-1)},
	\end{equation}
	and $\hat{\mathscr{M}} = \mathscr{F} (\mathscr{M}, i, j)$ is a reduction of $\mathscr{M}$ obtained by the following algebraic operations:
	\begin{enumerate}
		\item $\hat{\mathscr{M}} = \mathscr{M}$;
		\item Row merging: $ \hat{\mathscr{M}}_{j\star} =  \mathscr{M}_{i\star} \odot \mathscr{M}_{j\star}$;
		\item Column merging: $ \hat{\mathscr{M}}_{\star j} =  \mathscr{M}_{\star i} \odot  \mathscr{M}_{\star j}$;
		\item Remove $i$-th row and column of $\hat{\mathscr{M}}$.
	\end{enumerate}
As will be shown next, this operation conforms to the merging of the $i$-th pseudotree into the $j$-th one.	
	Note that the order of the row and column operations can be switched, which will not affect the outcome $\hat{\mathscr{M}}$.
	\begin{thm}
		Consider a directed graph $\widehat{\mathcal{G}}$, and let $\Pi$ be a disjoint pseudotree covering of all the edges of $\widehat{\mathcal{G}}$ where the characteristic matrix is $\mathscr{M}$. Suppose in $\Pi$, the $i$-th pseudotree is mergeable to the $j$-th one. Let $\hat{\Pi}$, with $|\hat{\Pi}| = |\Pi| - 1$, be a new covering obtained by merging the $i$-th pseudotree into the $j$-th one. Then the characteristic matrix of $\hat{\Pi}$ is given as $\hat{\mathscr{M}} = \mathscr{F} (\mathscr{M}, i, j)$.
	\end{thm}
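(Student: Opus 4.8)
The plan is to prove that $\mathscr{F}(\mathscr{M},i,j)$ reproduces, entry by entry, the characteristic matrix of the covering $\hat{\Pi}$ obtained by replacing $\mathcal{T}_i,\mathcal{T}_j$ with the single pseudotree $\mathcal{T}_j' := \mathcal{T}_i \cup \mathcal{T}_j$ (which is a pseudotree precisely because $\mathcal{T}_i$ is mergeable to $\mathcal{T}_j$), indexing $\mathcal{T}_j'$ by $j$ and dropping index $i$. First I would record two structural facts used throughout: $V(\mathcal{T}_j') = V(\mathcal{T}_i)\cup V(\mathcal{T}_j)$ and, by the remark following Definition~\ref{def:Mergeability}, $\Upsilon(\mathcal{T}_j') = \Upsilon(\mathcal{T}_j)$. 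Since steps (2)--(4) of $\mathscr{F}$ only alter row $j$, alter column $j$, and delete row/column $i$, every entry $\hat{\mathscr{M}}_{kl}$ with $k,l\notin\{i,j\}$ is left equal to $\mathscr{M}_{kl}$; as $\mathcal{T}_k,\mathcal{T}_l$ are unchanged in $\hat{\Pi}$, these entries are automatically correct. It then remains to verify the merged column $\hat{\mathscr{M}}_{kj}=\mathscr{M}_{ki}\odot\mathscr{M}_{kj}$, the merged row $\hat{\mathscr{M}}_{jk}=\mathscr{M}_{ik}\odot\mathscr{M}_{jk}$ (for $k\neq i,j$), and the diagonal $\hat{\mathscr{M}}_{jj}$.

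The diagonal is immediate: a pseudotree is never mergeable to itself, so the target value is $0$; and because $\mathscr{M}_{jj}=0$ while $\mathscr{M}_{ij}=1$ (hypothesis) and $\mathscr{M}_{ji}\in\{0,1\}$ (as $\mathcal{T}_i,\mathcal{T}_j$ intersect), the rule $a\odot 0 = 0$ from \eqref{eq:rule} forces both candidate values $\mathscr{M}_{ij}\odot\mathscr{M}_{jj}$ and $\mathscr{M}_{ji}\odot\mathscr{M}_{jj}$ to equal $0$, which simultaneously confirms the claimed order-independence of the row/column operations at the only doubly-touched cell $(j,j)$.

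For the merged column and row I would match the three outputs of $\odot$ to the characteristic value of the pair $(\mathcal{T}_k,\mathcal{T}_j')$ respectively $(\mathcal{T}_j',\mathcal{T}_k)$. The $\varnothing$ output is easiest: since $V(\mathcal{T}_k)\cap V(\mathcal{T}_j') = (V(\mathcal{T}_k)\cap V(\mathcal{T}_i))\cup(V(\mathcal{T}_k)\cap V(\mathcal{T}_j))$, the intersection is empty exactly when both $\mathscr{M}_{ki},\mathscr{M}_{kj}$ are $\varnothing$, matching $\varnothing\odot\varnothing=\varnothing$. For mergeability (value $1$ versus $0$) I would use two devices. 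Condition (1) of Definition~\ref{def:Mergeability} (the union being a pseudotree) reduces to an in-degree bound: using the pairwise disjointness of the covering (Definition~\ref{def:disjointtree}), any two distinct in-edges at a vertex would already violate the pseudotree property of one of the pairwise unions $\mathcal{T}_a\cup\mathcal{T}_b$, so $\mathcal{T}_i\cup\mathcal{T}_j\cup\mathcal{T}_k$ has in-degree at most $1$ whenever none of the relevant pairwise entries is $0$, and connectivity follows because $\mathcal{T}_k$ attaches through whichever of $\mathcal{T}_i,\mathcal{T}_j$ it meets. Condition (2) (root reachability) is handled via $\Upsilon(\mathcal{T}_j')=\Upsilon(\mathcal{T}_j)$ together with the bridging path supplied by the hypothesis: because $\Upsilon(\mathcal{T}_j)$ already reaches all of $V(\mathcal{T}_i)$, any path reaching $\Upsilon(\mathcal{T}_j)$ or $V(\mathcal{T}_i)$ can be extended, which is exactly what makes $1\odot\varnothing=1$ and $\varnothing\odot1=1$ come out correctly. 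I would also establish an \emph{entry-point lemma}---if $\mathcal{T}_a$ is mergeable to $\mathcal{T}_b$ then $\Upsilon(\mathcal{T}_a)\subseteq V(\mathcal{T}_b)$---and use it to show that some $\odot$-input combinations simply cannot occur under the hypothesis; for instance $\mathscr{M}_{ik}=1,\mathscr{M}_{jk}=\varnothing$ in the row is impossible, since it would force $\Upsilon(\mathcal{T}_i)\subseteq V(\mathcal{T}_j)\cap V(\mathcal{T}_k)$ and contradict the disjointness encoded by the $\varnothing$ entry.

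Finally, the ``$0$'' cases must be shown to propagate: if $\mathcal{T}_k$ meets but is not mergeable to $\mathcal{T}_i$ (or to $\mathcal{T}_j$), then $\mathcal{T}_k$ is not mergeable to $\mathcal{T}_j'$. When the obstruction is a violated in-degree in the smaller union, it survives verbatim in the larger union $\mathcal{T}_k\cup\mathcal{T}_j'$; when instead that union is a pseudotree but the root-reachability fails, I would trace the failure to a vertex of $\Upsilon(\mathcal{T}_i)$ that acquires an in-edge from $\mathcal{T}_k$ and---once $\mathcal{T}_i$ is merged into $\mathcal{T}_j$---also from $\mathcal{T}_j$, yielding an in-degree-$2$ vertex that again destroys the pseudotree property. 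I expect this last point, the bookkeeping of condition (2) in the non-mergeable cases, to be the main obstacle, because mergeability is directional and the multi-root (unicyclic) pseudotrees must be separated from single-root trees; the entry-point lemma and the pairwise in-degree argument are the tools that keep the case analysis finite and make every cell of the $\odot$ table agree with the characteristic matrix of $\hat{\Pi}$ as defined in Definition~\ref{def:MergeMat}.
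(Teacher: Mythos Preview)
Your approach is essentially the same as the paper's: both verify that the rules defining $\odot$ in \eqref{eq:rule} correspond exactly to how mergeability relations update when two pseudotrees are merged, the paper doing so via three terse bullet points (dominance of $0$, neutrality of $\varnothing$, persistence under $1\odot 1$) while you carry out the same case analysis in considerably greater detail and with explicit auxiliary devices (the entry-point lemma, the in-degree bookkeeping). One small caveat: the paper only asserts $\Upsilon(\mathcal{T}_j')\supseteq\Upsilon(\mathcal{T}_j)$ after Definition~\ref{def:Mergeability}, not equality---indeed a $2$-cycle formed by merging two opposite edges shows equality can fail---but since all roots of a pseudotree reach one another this does not affect any of your reachability arguments.
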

	\begin{proof} 
		We first show that the rules in \eqref{eq:rule} are consistent with merging two disjoint pseudotrees in a covering. Let a pseudotree $\mathcal{T}_1$ be mergeable to $\mathcal{T}_2$. Then, the following statements hold due to Definition~\ref{def:Mergeability}:
		\begin{enumerate}
			\item If either $\mathcal{T}_1$ or $\mathcal{T}_2$ cannot merge (be merged to) any other pseudotree $\mathcal{T}_3$ in $\Pi$, then  the union of $\mathcal{T}_1$ and $\mathcal{T}_2$ also cannot merge (be merged to) $\mathcal{T}_3$. This claim corresponds to the dominance of ``0'', implied by the three equations $0 \odot 0 = 0$, $1 \odot 0 = 0$, and $\varnothing \odot 0 = 0$ in \eqref{eq:rule}.
			\item If $\mathcal{T}_1$ and $\mathcal{T}_3$ do not share any common vertices, then merging $\mathcal{T}_1$ to $\mathcal{T}_2$ does not change the mergeability between $\mathcal{T}_2$ and $\mathcal{T}_3$. This statement corresponds to the relations $\varnothing \odot 0 = 0$, $\varnothing \odot 1 = 1$, and $\varnothing \odot \varnothing = \varnothing$ in \eqref{eq:rule}.
			\item If both $\mathcal{T}_1$ and $\mathcal{T}_2$ are mergeable to $\mathcal{T}_3$, then the union of $\mathcal{T}_1$ and $\mathcal{T}_2$ is still mergeable to $\mathcal{T}_3$. This statement is implied by the equation $1 \odot 1 = 1$ in \eqref{eq:rule}.
		\end{enumerate}
		Clearly, all the above statements correspond to the operators in \eqref{eq:rule}. 		
		Since the function $\mathscr{F} (\mathscr{M}, i, j)$  produces a reduced characterization matrix by the operations on the $i$-th and $j$-th rows as well as the $i$-th and $j$-th columns following the rules in \eqref{eq:operation}, the resulting characterization matrix indicates the mergeability of $\hat{\Pi}$, with $\mathcal{T}_i$ merged to $\mathcal{T}_j$ and the other pseudotrees untouched.
	\end{proof}

	\begin{exm} \label{ex1}
		Consider a directed simple graph with 10 vertices, as shown in Fig.~\ref{fig:mergeability}. Following Lemma~\ref{lem:treesmost}, the initial disjoint pseudotree covering $\Pi_0 = \{\mathcal{T}_1^{(0)}, \mathcal{T}_2^{(0)}, ..., \mathcal{T}_{9}^{(0)}\}$ in \eqref{eq:Pi0} is found, and each pseudotree has a single root vertex, which is not a sink and is labeled with the ordering number of the pseudotree. By the definition in \eqref{eq:Mergeability}, we construct the following matrix for characterizing the mergeability of $\Pi_0$.
		\begin{equation} \label{ex:M}
		\mathscr{M}^{(0)} = \begin{bmatrix}
		0   & 1   & \vn & \vn & 0   & \vn& 0   & \vn & \vn   \\
		0   & 0   & 1   & \vn & 0   & 0  & 0   & \vn & \vn   \\
		\vn & 1   & 0   & 0   &\vn  & 0  & \vn & 0   & 0     \\
		\vn & \vn & 0   & 0   &\vn  & 0  & \vn & 0   & 0     \\
		0   & 1   & \vn & \vn & 0   & 1  & 0   & 0   & \vn   \\
		\vn & 0   & 1   & 0   & 0   & 0  & 0   & 0   & 0     \\
		0   & 0   & \vn & \vn & 0   & 0  & 0   & 1   & \vn   \\
		\vn & \vn & 0   & 0   & 1   & 0  & 0   & 0   & 0     \\
		\vn & \vn & 0   & 0   & \vn & 0  & \vn & 0   & 0     \\
		\end{bmatrix}.
		\end{equation}
	Because $\mathscr{M}^{(0)}_{12} = 1$, the pseudotree $\mathcal{T}_{{1}}^{(0)}$ is mergeable to $\mathcal{T}_{{2}}^{(0)}$. 		
	The operation on the first two rows in $\mathscr{M}^{(0)}$ leads to
		\begin{align*}
		& \mathscr{M}^{(0)}_{1\star} \odot \mathscr{M}^{(0)}_{2\star} =
		\begin{bmatrix}
		0   & 0   & 1 & \vn   & 0 & 0 & 0   & \vn & \vn
		\end{bmatrix},
		\end{align*}
while the corresponding column operation provides
\begin{align*}
		&\mathscr{M}^{(0)}_{\star 1} \odot \mathscr{M}^{(0)}_{\star 2} =
		\begin{bmatrix}
		0   & 0   & 1 & \vn   & 0 & 0 & 0   & \vn & \vn
		\end{bmatrix}^\top.
		\end{align*}

Next we replace the second row and column by the above products, and remove the first row and column of $\mathscr{M}^{(0)}$.
The reduction $\mathscr{M}^{(1)} = \mathscr{F}(\mathscr{M}^{(0)}, 1,2)$ then yields
		\begin{equation*}
		\mathscr{M}^{(1)} = \begin{bmatrix}
		0   & 1   & \vn & 0   & 0  & 0   & \vn & \vn   \\
		1   & 0   & 0   &\vn  & 0  & \vn & 0   & 0     \\
		\vn & 0   & 0   &\vn  & 0  & \vn & 0   & 0     \\
		0   & \vn & \vn & 0   & 1  & 0   & 0   & \vn   \\
		{0} & 1   & 0   & 0   & 0  & 0   & 0   & 0     \\
		0   & \vn & \vn & 0   & 0  & 0   & 1   & \vn   \\
		\vn & 0   & 0   & 1   & 0  & 0   & 0   & 0     \\
		\vn & 0   & 0   & \vn & 0  & \vn & 0   & 0     \\
		\end{bmatrix} \in \mathbb{M}^{8 \times 8},
		\end{equation*}
		which characterizes a new disjoint pseudotree covering: $\Pi_1 = \{\mathcal{T}_1^{(1)}, \mathcal{T}_2^{(1)}, ..., \mathcal{T}_{8}^{(1)}\}$, where $\mathcal{T}_1^{(1)} = \mathcal{T}_1^{(0)} \cup \mathcal{T}_2^{(0)}$ and $\mathcal{T}_i^{(1)} = \mathcal{T}_{i+1}^{(0)}$, for all $i = 2, 3, ..., 8$.
		
		\begin{figure}[!tp]\centering
			\centering
			\includegraphics[width=0.3\textwidth]{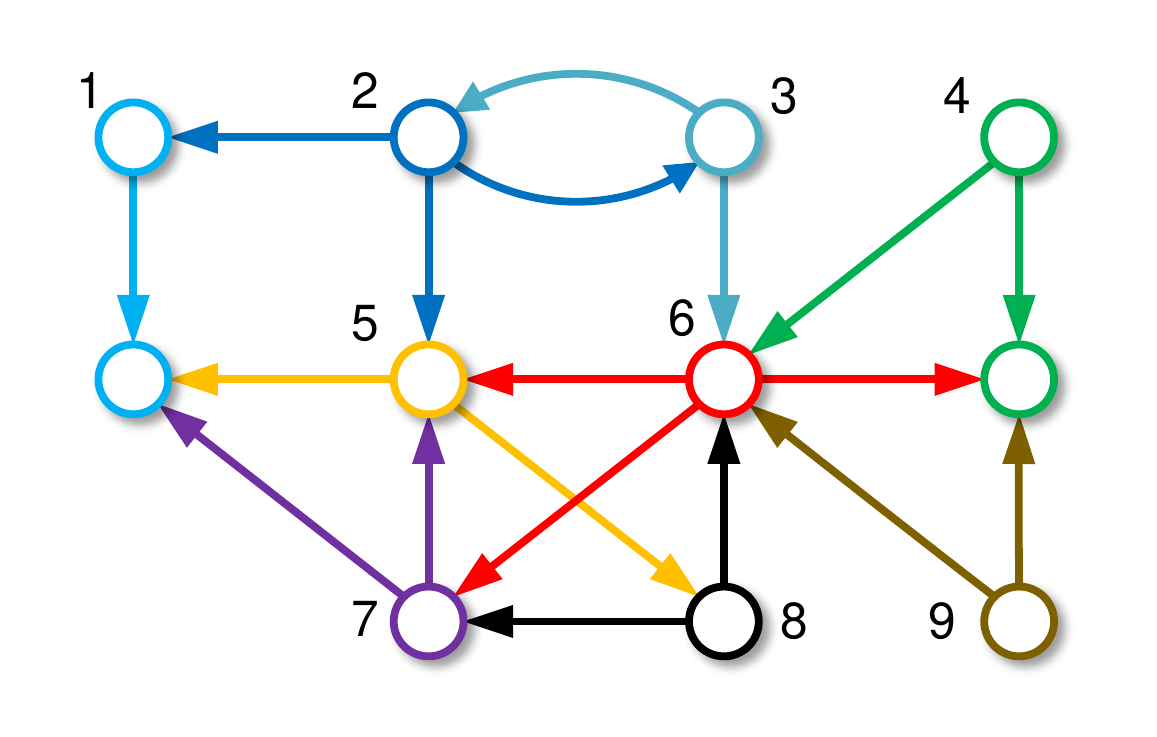}
			
			\caption{A directed simple graph with 11 vertices, which is decomposed into 9 disjoint pseudotrees, which are labeled with different colors.}
			\label{fig:mergeability}
		\end{figure}%
	\end{exm}
	
	The newly defined operation in \eqref{eq:operation} and the function \eqref{eq:func} allow us to represent the merging of two disjoint pseudotrees in a covering $\Pi$ by a reduction of its characteristic matrix $\mathscr{M}$.
	Based on this, we then proceed to a graph merging strategy that aims for a covering with the smallest possible number of disjoint pseudotrees. From the initial disjoint pseudotree covering $\Pi_0$, we obtain its characteristic matrix $\mathscr{M}^{(0)}$, according to which, we devise a heuristic algorithm to recursively integrate mergeable pseudotrees, see the description in Algorithm~\ref{alg:merge}.

	\begin{algorithm}[t]
		\caption{Disjoint pseudotree merging}
		\begin{algorithmic}[1]
			\REQUIRE  Extended graph $\widehat{\mathcal{G}}$ in Definition~\ref{def:extendednet}.
			
			\STATE  Initialize the disjoint pseudotree covering $\Pi_{0}$ as in \eqref{eq:Pi0}, with each pseudotree minimal.
			
			\STATE  Construct the characteristic matrix $\mathscr{M} =\mathscr{M}^{(0)}$ in \eqref{eq:2}.
			
			\REPEAT
			
			\STATE Find an entry $\mathscr{M}_{ij} = 1$, which is the only ``1'' entry in the ${i}$-th row of $\mathscr{M}$.
			
			\IF{there are multiple rows containing a single ``1'' entry}
			\STATE Let $i$ be the index of a row containing the most ``$\varnothing$'' entries.
			\ENDIF
			
			\STATE $\mathscr{M} \leftarrow \mathscr{F} (\mathscr{M}, i, j)$, and update $\Pi$ by merging the ${i}$-th pseudotree to the ${j}$-th one.
			
			\UNTIL each row of $\mathscr{M}$ contains more than one ``1'' entry.
			
			\REPEAT
			
			\STATE Find the $i$-th row of $\mathscr{M}$ with ``1'' entries and the most ``$\varnothing$'' entries.
			
			\STATE Select $\mathscr{M}_{ij} = 1$ as any ``1'' entry of the $i$-th row, and $\mathscr{M} \leftarrow \mathscr{F} (\mathscr{M}, i, j)$; update $\Pi$ by merging the ${i}$-th pseudotree to the ${j}$-th one.
			
			\UNTIL there is no ``1'' entry in $\mathscr{M}$.
			
			\RETURN $\Pi$.
			
		\end{algorithmic}
		\label{alg:merge}
	\end{algorithm}
	
	The scheme in Algorithm~\ref{alg:merge} is presented in two parts. In the first part, we find the row of the  characteristic  matrix with a unique ``1'' entry, as we aim to merge a pseudotree $\mathcal{T}_{{i}}$ to $\mathcal{T}_{{j}}$, if $\mathcal{T}_{{j}}$ is the only pseudotree that $\mathcal{T}_{{i}}$ is mergeable to. If there are multiple pairs that satisfy this condition, (e.g., in Fig.~\ref{fig:mergeability}, $\mathcal{T}_2$ is the only pseudotree that $\mathcal{T}_1$ and $\mathcal{T}_3$ can be merged to), we then merge $\mathcal{T}_i$ to $\mathcal{T}_j$, if $\mathcal{T}_i$ has more non-overlapped pseudotrees in $\Pi$, namely, the $i$-th row of $\mathscr{M}$ contains more ``$\varnothing$'' entries.  For instance, in Fig.~\ref{fig:mergeability}, as $\mathcal{T}_1$ has more non-overlapping pseudotrees, we merge $\mathcal{T}_1$ to $\mathcal{T}_2$ first. The reason behind this particular operation is that aggregating such a pair of pseudotrees would potentially cause less influence on the subsequent merging of the other pseudotrees in the covering.
	The second part of Algorithm~\ref{alg:merge} then deals with the remaining mergeable disjoint pseudotrees. Still, we tend to merge the pairs that have less overlaps with the other pseudotrees. When there does not exist any pair of mergeable pseudotrees, the merging procedure is finalized.

	\begin{exm}
		Consider the network in Fig.~\ref{fig:mergeability} and its initial disjoint pseudotree covering $\Pi_0 = \{\mathcal{T}_1^{(0)}, \mathcal{T}_2^{(0)}, ..., \mathcal{T}_{9}^{(0)}\}$, which is characterized by the matrix in \eqref{ex:M}.
		Following Algorithm~\ref{alg:merge},
		the following operations are taken in order:
		$
		\mathscr{M}^{(1)} = \mathscr{F}(\mathscr{M}^{(0)}, 1, 2)$, $
		\mathscr{M}^{(2)} = \mathscr{F}(\mathscr{M}^{(1)}, 1, 2)$, $
		\mathscr{M}^{(3)} = \mathscr{F}(\mathscr{M}^{(2)}, 3, 4)$,
		and finally, we obtain
		\begin{equation*}
		\mathscr{M}^{(4)} = \mathscr{F}(\mathscr{M}^{(3)}, 4, 5) = \begin{bmatrix}
		0   & 0   & 0  & 0   & 0     \\
		0   & 0   & 0  & 0   & 0     \\
		0   & 0   & 0  & 0   & 0     \\
		0   & 0   & 0  & 0   & 0     \\
		0   & 0   & 0  & 0   & 0     \\
		\end{bmatrix} \in \mathbb{M}^{5 \times 5}.
		\end{equation*}
		The corresponding disjoint pseudotree covering is given as $\hat{\Pi} = \{\mathcal{T}_1^{(4)}, \mathcal{T}_2^{(4)}, \mathcal{T}_3^{(4)}, \mathcal{T}_4^{(4)},\mathcal{T}_5^{(4)}\}$,
		with $\mathcal{T}_1^{(4)} = \mathcal{T}_1^{(0)} \cup \mathcal{T}_2^{(0)} \cup \mathcal{T}_3^{(0)}$, 
		$\mathcal{T}_2^4 = \mathcal{T}_4^{(0)}$, $\mathcal{T}_3^{(4)} = \mathcal{T}_5^{(0)} \cup \mathcal{T}_6^{(0)}$,
		 $\mathcal{T}_4^{(4)} = \mathcal{T}_7^{(0)} \cup \mathcal{T}_8^{(0)}$, and $\mathcal{T}_5^{(4)} = \mathcal{T}_9^{(0)}$. The resulting disjoint pseudotrees are depicted in Fig.~\ref{fig:mergeresult}, with their roots being labeled with numbers.
		
		\begin{figure}[!tp]\centering
			\centering
			\includegraphics[width=0.3\textwidth]{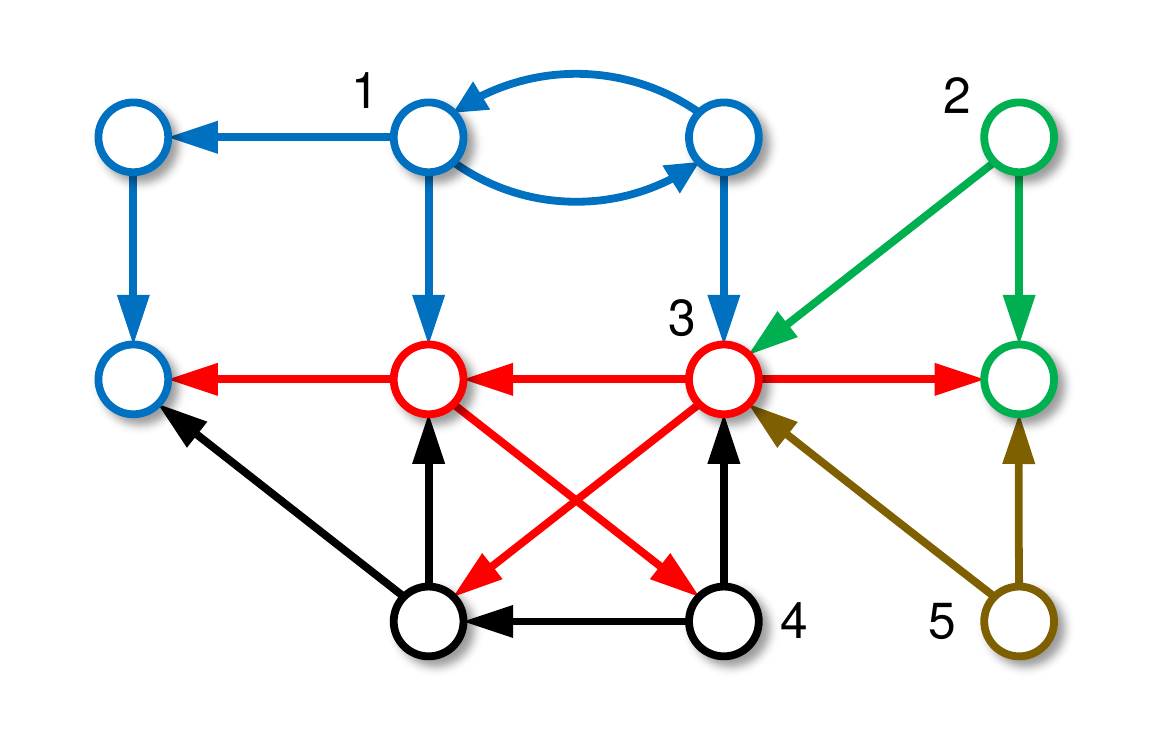}
			\caption{The resulting disjoint pseudotree covering of the directed graph in Fig.~\ref{fig:mergeability}, which is now partitioned into only 5 disjoint pseudotrees, labeled with different colors.}
			\label{fig:mergeresult}
		\end{figure}%
	\end{exm}

	\begin{rem}
	Algorithm~\ref{alg:merge} provides a heuristic but scalable procedure to find a local optimal solution in the sense that there will be no mergeable pseudotrees in the obtained covering. We may choose alternative heuristic merging procedures, e.g., a simple random merging, while the presented merging approach will potentially lead to a fewer number of pseudotrees.
	 It is possible to reach the exact minimum for dynamic networks of small size, for instance, the graphs in Fig.~\ref{fig:ex5decomp1} and Fig.~\ref{fig:mergeability}. However, for large-scale networks with e.g., up to hundreds or thousands of vertices, it is hard to guarantee the minimality in general. It is worth emphasizing that finding the minimal covering reflects as a new combinatorial optimization problem, whose optimal solution is not unique. Exploring the solution for this optimization problem itself requires a significant effort and  can lead to even new contributions to graph theory. Thus, it is  beyond the scope of this paper.
	\end{rem}
	
	\subsubsection{Allocation of Excitation Signals}
	For the synthesis problem of allocating excitation signals in a dynamic network for guaranteeing generic identifiability of the network model set, we apply Algorithm~\ref{alg:merge} to its extended graph $\widehat{\mathcal{G}}$ as a first step, aiming to decompose $\widehat{\mathcal{G}}$ into a minimal number of disjoint pseudotrees that cover all the parameterized edges of $\widehat{\mathcal{G}}$. Then, we proceed to the second step of our approach, which determines the locations of external excitation signals for the generic identifiability of $\mathcal{M}$. Specifically, in this step, we aim to solve the following problem: \textit{Given the extended graph $\widehat{\mathcal{G}}$ of a dynamic network model set $\mathcal{M}$, and let $\Pi$ be a disjoint pseudotree covering of $\widehat{\mathcal{G}}$, in which there do not exist mergeable pseudotrees. How to allocate the external excitation signals to make $\mathcal{M}$ generically identifiable}.

	To tackle the allocation problem, the process noises in the dynamic network have to be considered, which results in two facts: First, in the setting of the extended graph $\widehat{\mathcal{G}}$ in Definition~\ref{def:extendednet}, the vertices in the set $\widehat{V}$, which are also the roots of $|\widehat{V}|$ pseudotrees in $\Pi$, have been already excited by white noises in $e(t)$, or more precisely $e_{\theta}(t)$ in \eqref{eq:extg}. Second, it is also possible that one of the roots of a pseudotree in $\Pi$ has been excited by $e_0(t)$ in \eqref{eq:extg}, then it is not necessary to assign an excitation signal to a root of this pseudotree.  
	
	We thereby have a set $V_\mathrm{e} \subset V(\widehat{\mathcal{G}})$ with $|V_\mathrm{e}| = p$, in which the vertices are stimulated by white noises. More precisely, $V_\mathrm{e}$ includes $\widehat{V}$ and the vertices in $\mathcal{V}$ that are affected by $e_0(t)$. Define a set $\Pi_\mathrm{s} \subseteq \Pi$, which is generated by removing the elements in $\Pi$ that are  rooted at $V_\mathrm{e}$. Then, the following result is guaranteed by Corollary~\ref{coro:treecover}.
\begin{coro}\label{coro:Pis}
		Consider a set of vertices $\mathcal{R}: = \{\tau_1, \tau_2, ..., \tau_{|\Pi_\mathrm{s}|}\}$, where $\tau_i$ is a root of $\mathcal{T}_i \in \Pi_\mathrm{s}$. If  all the vertices in $\mathcal{R}$ are excited, then the dynamic network model set $\mathcal{M}$ is generically identifiable.
	\end{coro}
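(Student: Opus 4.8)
The plan is to read Corollary~\ref{coro:Pis} as an almost immediate specialization of Corollary~\ref{coro:treecover}, whose hypothesis is exactly that the extended graph $\widehat{\mathcal{G}}$ admits a disjoint pseudotree covering in which every pseudotree carries at least one excited root. Since $\Pi$ is already given as a disjoint pseudotree covering of all parameterized edges $E(\widehat{\mathcal{G}})$, the only thing left to establish is that, under the proposed excitation scheme, each pseudotree in $\Pi$ has a root lying in the stimulated set $\mathcal{U} = V_\mathrm{e} \cup \mathcal{R}$. Generic identifiability would then follow by a direct invocation of Corollary~\ref{coro:treecover}.

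First I would split $\Pi$ into the two groups induced by the definition of $\Pi_\mathrm{s}$: the pseudotrees that are \emph{removed} from $\Pi$ (those rooted at some vertex of $V_\mathrm{e}$) and the pseudotrees \emph{retained} in $\Pi_\mathrm{s}$. For a removed pseudotree $\mathcal{T}$ one has $\Upsilon(\mathcal{T}) \cap V_\mathrm{e} \neq \emptyset$ by construction, so $\mathcal{T}$ already possesses a root that is stimulated by a white noise, namely either a vertex of $\widehat{V}$ driven by a parameterized source $e_\theta$ or a vertex of $\mathcal{V}$ driven directly by a nonparameterized white noise; in either case this root lies in $V_\mathrm{e} \subseteq \mathcal{U}$. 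For a retained pseudotree $\mathcal{T}_i \in \Pi_\mathrm{s}$, the chosen $\tau_i \in \Upsilon(\mathcal{T}_i)$ is assigned an external excitation, so $\tau_i \in \mathcal{R} \subseteq \mathcal{U}$ is likewise an excited root. Combining the two groups, every pseudotree in $\Pi$ has at least one excited root while $\Pi$ still covers all of $E(\widehat{\mathcal{G}})$, which is precisely the hypothesis of Corollary~\ref{coro:treecover}.

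Before invoking that corollary I would also check that the set $\mathcal{R}$ is well defined, i.e.\ that the $\tau_i$ are genuinely distinct. This is a consequence of disjointness: by Definition~\ref{def:disjointtree} two disjoint pseudotrees cannot share a common root, so the $|\Pi_\mathrm{s}|$ roots $\tau_i$ are pairwise distinct and exactly $|\mathcal{R}| = |\Pi_\mathrm{s}|$ external signals are used.

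The argument is essentially bookkeeping, so I do not expect a hard technical obstacle; the one point that deserves care is confirming that the removed pseudotrees are exactly those whose excited root is supplied by a white-noise source, so that the externally excited roots in $\mathcal{R}$ account precisely for the remaining pseudotrees. This rests on the fact that each source vertex of $\widehat{V}$, having no in-neighbour, must be the root of the pseudotree containing its outgoing edges, and that the second condition of Definition~\ref{def:disjointtree} forces all out-edges of a vertex into a single pseudotree; together these guarantee that the white-noise stimulation is correctly captured by $V_\mathrm{e}$ and that no pseudotree of $\Pi$ is left without an excited root.
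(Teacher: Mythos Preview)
Your proposal is correct and matches the paper's own treatment: the paper does not give a standalone proof but simply states that the result ``is guaranteed by Corollary~\ref{coro:treecover}'', which is exactly the reduction you carry out. Your additional bookkeeping (splitting $\Pi$ into $\Pi\setminus\Pi_\mathrm{s}$ and $\Pi_\mathrm{s}$, and verifying distinctness of the $\tau_i$ via Definition~\ref{def:disjointtree}) just spells out the details the paper leaves implicit.
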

	
	Consequently, a direct strategy is to place an independent excitation signal to a root of each disjoint pseudotree in $\Pi_\mathrm{s}$. However, the condition Theorem~\ref{thm:treecover} allows us to further reduce the number of excitation signals.
	Thereby, we continue to check the necessity of each stimulated {vertex} in $\mathcal{R}$. If there exists a pseudotree $\mathcal{T}_k \in \Pi_\mathrm{s}$ such that each vertex in $V(\mathcal{T}_k)$ satisfies the vertex-disjoint condition $b_{\widehat{\mathcal{R}} \cup \widehat{V} \rightarrow \widehat{\mathcal{P}}_j} = |\widehat{\mathcal{P}}_j|$, where $\widehat{\mathcal{R}}: = \mathcal{R} \setminus \tau_k$, we then remove $\tau_k$ from $\mathcal{R}$. Simply put, if removing an element in $\mathcal{R}$ does not change the generic identifiability of the network model set $\mathcal{M}$, we can remove it. The detailed procedure is summarized in Algorithm~\ref{alg:assign}, which eliminates the removable elements in $\mathcal{R}$ iteratively.

	\begin{algorithm}[t]
		\caption{Allocation of excitation signals}
		\begin{algorithmic}[1]
			\REQUIRE The disjoint pseudotree covering $\Pi$ obtained from Algorithm~\ref{alg:merge}.
			
			\STATE $\Pi_\mathrm{s} \leftarrow \Pi$
			\FOR{$\mathcal{T}_k \in \Pi$, $k = 1: |\Pi|$}
			\IF{$\Upsilon(\mathcal{T}_k) \cap V_\mathrm{e} \ne \emptyset$} \STATE $\Pi_\mathrm{s} \leftarrow \Pi_\mathrm{s} \setminus \mathcal{T}_k$.
			\ENDIF
			\ENDFOR
			\STATE Let $\mathcal{R}: = \{\tau_1, \tau_2, ..., \tau_{|\Pi_\mathrm{s}|}\}$, with $\tau_i$ a root of $\mathcal{T}_i \in \Pi_\mathrm{s}$
			
			\FOR{$\mathcal{T}_k \in \Pi_\mathrm{s}$, $k = 1: |\Pi_\mathrm{s}|$}
			\STATE $\widehat{\mathcal{R}} \leftarrow \mathcal{R} \setminus \tau_k$
			\IF{$b_{\widehat{\mathcal{R}} \cup \widehat{V} \rightarrow \widehat{\mathcal{P}}_j} = |\widehat{\mathcal{P}}_j|$, $\forall~j \in V(\mathcal{T}_k)$}
			\STATE ${\mathcal{R}} \leftarrow \mathcal{R} \setminus \tau_k$.
			\ENDIF
			\ENDFOR
			
			\RETURN $\mathcal{R}$.
		\end{algorithmic}
		\label{alg:assign}
	\end{algorithm}
	
	\begin{exm}
		Continue the network example in Fig.~\ref{fig:mergeresult}, which depicts a disjoint pseudotree covering resulting from Algorithm~\ref{alg:merge}. Suppose that the roots of the pseudotrees 2 and 5 are excited by white noises {in $e$}. Then, through Algorithm~\ref{alg:assign}, we do not need to excite the root of the pseudotree 3. Thus, only two {additional} excitation signals {in $r$} are required to achieve generic identifiability, and one of the possible allocations is illustrated in Fig.~\ref{fig:assignresult}. Note that in $\widehat{\mathcal G}$, there are two sources, and the maximal in-degree is 4. Thus, it follows from \eqref{eq:bound} that $K$ is lower-bounded by  $\max\left\{|\mathcal{S}_\mathrm{ou}(\widehat{\mathcal{G}})|, \max_{j \in V(\widehat{\mathcal{G}})} {|\widehat{\mathcal{P}}_j|}\right\}
		- p = 2$, which means that 2 is the minimal number of excitation signals {in $r$} that are needed for generic identifiability.
		\begin{figure}[!tp]\centering
			\centering
			\includegraphics[width=0.3\textwidth]{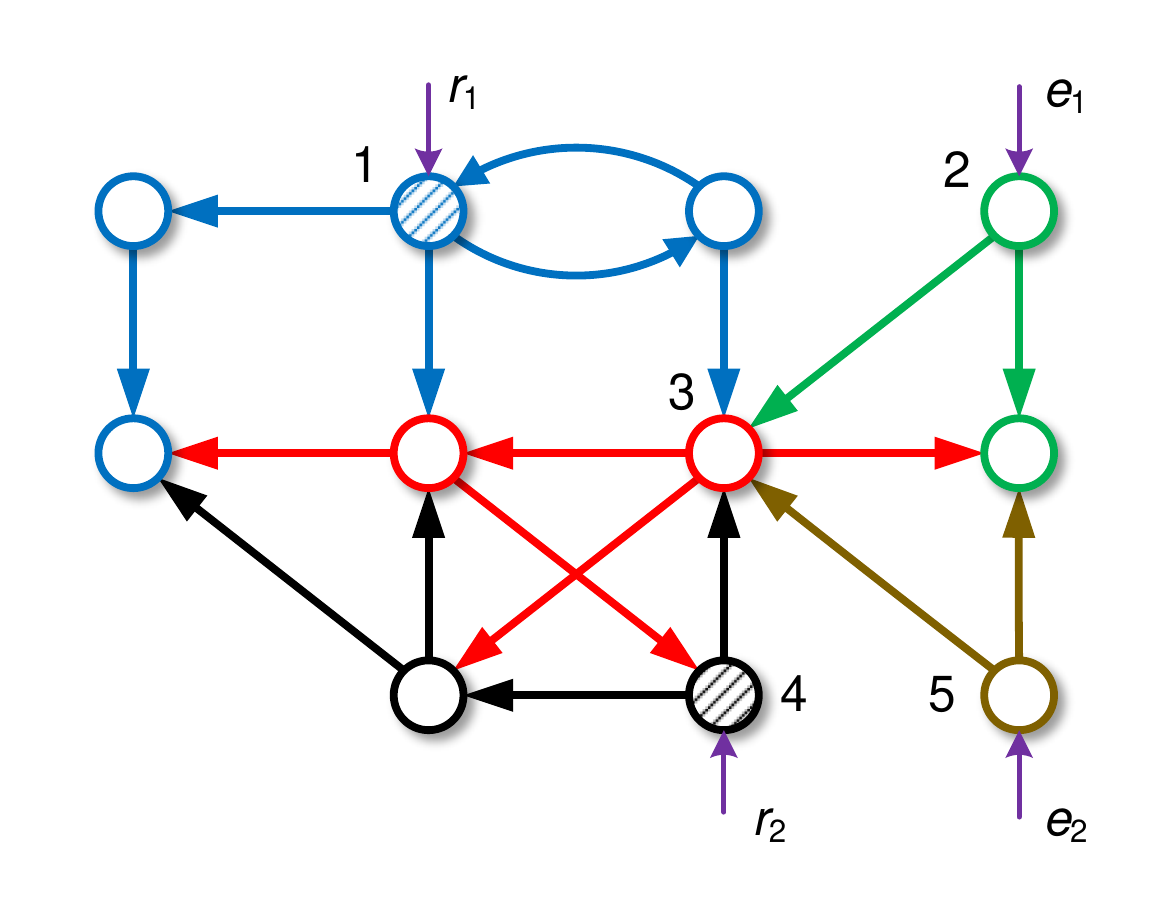}
			
			\caption{One of the solutions for allocating the excitation signals is to assign the shadowed vertices as the excited ones.}
			\label{fig:assignresult}
		\end{figure}%
	\end{exm}

\begin{rem}
Because of Assumption \ref{ass3} all non-zero modules in $G$ need to be parameterized in order for the graph-based result to be applicable. However also in case of non-parameterized, known modules in $G$ that are unequal zero, the results apply as long as the known modules in $G$ are chosen generic values, i.e. they do not introduce any dependence relations. In this situation, the pseudotree covering results presented in this section remain to hold, but require that only the parameterized modules in $G$ and $H$ need to be covered by pseudotrees. A further analysis of this situation is beyond the scope of the current paper.
\end{rem}	
	
	\section{A Dual Problem: Selecting Measured Vertices}
	\label{sec:dual}
	In the previous sections, we have considered the situation that all the vertex signals are measured, while only partial vertices are selected to be excited. 
%
	The works in e.g., \cite{hendrickx2018identifiability,henk2018identifiability} consider a dual model setting, in which all the vertices are stimulated by independent excitation sources, but only a subset of vertex signals are measured. In this section, we show that our approach can be also adapted to solve the dual problem in this setting, that is to select a minimal number of measured vertices for generic identifiability. Specifically, this section considers a network with the following dynamics
	\begin{equation} \label{eq:net2}
	\begin{split}
	w(t) &= G(q) w(t) + r(t) + v(t), \\
	y(t) &= C w(t),
	\end{split}
	\end{equation}
	where $w(t)$,  $r(t)$, and $v(t)$ are vertex signals, excitation signals and process noises defined in \eqref{eq:net}. The measurement signal $y(t) \in \mathbb{R}^{m}$ is a vector
	consisting of selected internal variables in the network \eqref{eq:net2}, and $C \in \mathbb{R}^{m \times L}$ is a binary matrix with $C_{ij} = 1$ if $y_i(t) = w_j(t)$, and $C_{ij} = 0$ otherwise. For ease of exposition, we will consider the situation that $v \equiv 0$. Define
	\begin{equation} \label{eq:modset2}
	\check{\mathcal{M}}: = \{G(q,\theta), \theta \in \Theta\}
	\end{equation}
	as the network model set associating with dynamic networks in form of \eqref{eq:net2}, where all the nonzero entries in $G(q, \theta)$ are parameterized.
	We are interested in the question: \textit{How to select a minimal number of measurement signals $y(t)$ such that $\check{\mathcal{M}}$ is generically identifiable, i.e.,
	almost all network modules $G_{ji}$ can be uniquely
	identified from $C (I - G)^{-1}$}.
	
	Following  \cite{hendrickx2018identifiability,bazanella2017identifiability}, a path-based condition for the generic identifiability of $\check{\mathcal{M}}$ is that the maximum number of mutually vertex-disjoint
	paths from $\mathcal{N}_j^+$ to $\mathcal{C}$ is  equal to $| \mathcal{N}_j^+|$
	for all $i \in V(\check{\mathcal{G}})$, where $\mathcal{N}_j^+$ is the set of the out-neighbors of $j$.
	
	Thereby, we define the concept of \textit{anti-pseudotrees}. A simple connected graph $\check{\mathcal{T}}$ is an anti-pseudotree if $|\mathcal{N}_i^+| \leq 1$, for all $i \in V(\check{\mathcal{T}})$. An anti-pseudotree can be generated by reversing the orientations of all the edges of a pseudotree in Definition~\ref{def:pseudotree}. Furthermore, $\Upsilon(\check{\mathcal{T}})$ is a set of roots of an anti-pseudotree $\check{\mathcal{T}}$ such that each vertex in $\check{\mathcal{T}}$ has a unique directed path toward all the vertices in $\Upsilon(\check{\mathcal{T}})$. Two anti-pseudotrees are disjoint if they do not share any common edges, and all the edges incident to each vertex are included in the same anti-pseudotree. Analogously, we can characterize the generic identifiability of a dynamic network model set $\check{\mathcal{M}}$ using disjoint anti-pseudotrees.
	\begin{pro}
		Consider a network model set $\check{\mathcal{M}}$ composed of network models described in \eqref{eq:net2}.
		Let $\mathcal{Y}: = \{y_1, y_2, ..., y_m\}$ be the set of measured vertices. The network model set $\check{\mathcal{M}}$ is generically identifiable if and only if
		one of the following conditions hold:
		\begin{enumerate}
			\item There exists a set of disjoint anti-pseudotrees, $\check\Pi = \{\check{\mathcal{T}}_1, \check{\mathcal{T}}_2, ..., \check{\mathcal{T}}_n\}$ with $n \leq m$,
			such that each anti-pseudotree has at least one root vertex being measured. 
			namely, $\Upsilon(\check{\mathcal{T}}_k) \cap \mathcal{Y} \ne \emptyset$, $\forall~k \in \{1, 2, ..., n\}$;
			\item There exists a set of disjoint anti-pseudotrees, $\check\Pi = \{\check{\mathcal{T}}_1, \check{\mathcal{T}}_2, ..., \check{\mathcal{T}}_n\}$ with $n > m$, such that $y_k \in \Upsilon(\check{\mathcal{T}}_k)$, $\forall~k \in \{1, 2, ..., m\}$
			and $b_{{\mathcal{N}}_j^+ \rightarrow \mathcal{\mathcal{Y}}} = |\widehat{\mathcal{N}}_j^+|$, $\forall~j \in V(\check{\mathcal{T}}_{m+1})\cup \cdots \cup V(\check{\mathcal{T}}_{n})$.
		\end{enumerate}
	\end{pro}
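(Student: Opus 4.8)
The plan is to reduce the statement to its already-established primal counterpart by a graph-reversal (transposition) argument, and then invoke Theorem~\ref{thm:treecover} together with Corollary~\ref{coro:treecover}. Since $v \equiv 0$ and every vertex is excited, the only object available from data is the transfer matrix $C(I-G(q,\theta))^{-1}$, and generic identifiability of $\check{\mathcal{M}}$ amounts to recovering $G(q,\theta)$ from it generically. Transposing, this is equivalent to recovering $G(q,\theta)^{\top}$ from $(I-G(q,\theta)^{\top})^{-1}C^{\top}$. First I would introduce the reversed graph $\check{\mathcal{G}}^{\top}$, i.e.\ the graph obtained by reversing the orientation of every edge of $\check{\mathcal{G}}$, which is precisely the graph associated with $G(q,\theta)^{\top}$. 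In this transposed picture $C^{\top}$ plays the role of the excitation matrix $R$, so the set $\mathcal{Y}$ of measured vertices of $\check{\mathcal{G}}$ becomes the set of stimulated vertices of $\check{\mathcal{G}}^{\top}$; because there is no noise, the extended graph coincides with $\check{\mathcal{G}}^{\top}$ itself, $p=0$, and $\mathcal{U}=\mathcal{Y}$.

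Next I would verify that the reversal is an exact dictionary between the two settings. Reversing all edges of an anti-pseudotree, in which every vertex has out-degree at most one, produces a graph in which every vertex has in-degree at most one, i.e.\ a pseudotree in the sense of Definition~\ref{def:pseudotree}, and the map is a bijection on connected simple subgraphs. Disjointness is preserved: the requirement that all edges incident to a vertex belong to the same anti-pseudotree is carried, under reversal, exactly into condition~2 of Definition~\ref{def:disjointtree} (all outgoing edges of a vertex lie in one pseudotree). Likewise a root of an anti-pseudotree, toward which every vertex has a unique directed path, becomes a root of the corresponding pseudotree, from which every vertex is uniquely reachable, so $\Upsilon(\check{\mathcal{T}}_k)$ is preserved. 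Finally, the out-neighbors $\mathcal{N}_j^{+}$ of $j$ in $\check{\mathcal{G}}$ are exactly the in-neighbors $\widehat{\mathcal{P}}_j$ of $j$ in $\check{\mathcal{G}}^{\top}$, and reversal is a bijection between vertex-disjoint paths from $\mathcal{N}_j^{+}$ to $\mathcal{Y}$ in $\check{\mathcal{G}}$ and vertex-disjoint paths from $\mathcal{Y}$ to $\widehat{\mathcal{P}}_j$ in $\check{\mathcal{G}}^{\top}$, so the two maximal counts $b_{\mathcal{N}_j^{+}\rightarrow\mathcal{Y}}$ and $b_{\mathcal{Y}\rightarrow\widehat{\mathcal{P}}_j}$ coincide.

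With this dictionary in hand, the path-based criterion recalled just before the proposition, $b_{\mathcal{N}_j^{+}\rightarrow\mathcal{Y}} = |\mathcal{N}_j^{+}|$ for all $j$, is precisely the condition of Lemma~\ref{lem:genericid} applied to $\check{\mathcal{G}}^{\top}$ with $\mathcal{U}=\mathcal{Y}$. I would then translate the two cases directly. Condition~1, with $n \leq m$ disjoint anti-pseudotrees each carrying a measured root, is the image under reversal of the sufficient condition in Corollary~\ref{coro:treecover} (a disjoint pseudotree covering in which each tree has an excited root); condition~2, with $n > m$ and the extra trees satisfying the path condition, is the image of Theorem~\ref{thm:treecover}. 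Since Corollary~\ref{coro:treecover} and Theorem~\ref{thm:treecover} together are necessary and sufficient for generic identifiability of the transposed model set, mapping back through the reversal yields the claimed equivalence for $\check{\mathcal{M}}$.

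The main obstacle is ensuring that the duality dictionary is tight rather than merely suggestive: one must check that the particular disjointness condition stated for anti-pseudotrees coincides edge-for-edge with Definition~\ref{def:disjointtree} after reversal, that the root notion flips consistently, and that transposition preserves both the independent parameterization of the nonzero entries (Assumption~\ref{ass2}) and genericity, so that Lemma~\ref{lem:genericid} and Theorem~\ref{thm:treecover} remain applicable to $G(q,\theta)^{\top}$. The boundary case $n=m$, where both conditions may hold simultaneously, causes no difficulty since the statement only requires one of them to hold.
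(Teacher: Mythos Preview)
Your proposal is correct and takes essentially the same approach as the paper, which simply states that the proof follows reasoning similar to that of Theorem~\ref{thm:treecover} and Corollary~\ref{coro:treecover} and omits the details. Your transposition/graph-reversal dictionary is precisely the mechanism that makes ``similar reasoning'' rigorous, and you have been more explicit than the paper about why the disjointness, root, and vertex-disjoint-path notions transfer under reversal.
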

	The proof follows a similar reasoning as the proof of Theorem~\ref{thm:treecover} and Corollary~\ref{coro:treecover}, thus it is omitted here. Moreover, the minimal number of measurement signals that guarantees generic identifiability is bounded as
	\begin{equation*}
	\max\left\{|\mathcal{S}_\mathrm{in}({\mathcal{G}})|, \max_{j \in V({\mathcal{G}})} {|{\mathcal{N}}_j^+|}\right\}
	\leq m
	\leq \check{\kappa}({\mathcal{G}}),
	\end{equation*}
	where $\mathcal{G}$ is the underlying graph of the network \eqref{eq:net2}, and $\check{\kappa}({\mathcal{G}})$ is the minimal number of disjoint anti-pseudotrees that cover all the parameterized edges in $\mathcal{G}$.
	
	Analogously, we can devise a similar algorithm as Algorithm~\ref{alg:merge} to find the minimal covering and then remove unnecessary measurements as Algorithm~\ref{alg:assign} such that a set of measured vertices are selected. Consider an example shown in Fig.~\ref{fig:anti}, which is taken from \cite{hendrickx2018identifiability}. The network in this example can be decomposed into 4 disjoint anti-pseudotrees. Our approach then suggests  taking the measurements from the roots of these anti-pseudotrees. Consequently, generic identifiability can be achieved with 4 measured vertices.
	
	\begin{figure}[!tp]\centering
		\centering
		\includegraphics[width=0.33\textwidth]{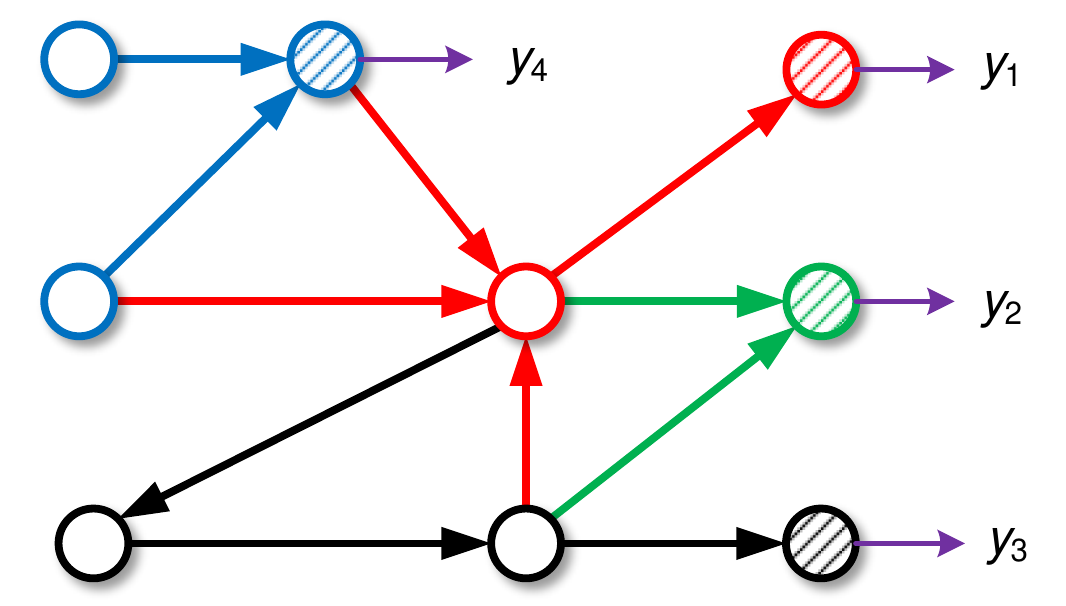}
		
		\caption{A network can be partitioned into 4 anti-pseudotrees highlighted by different colors. We select the shadowed vertices as measurement signals to achieve generic identifiability.}
		\label{fig:anti}
	\end{figure}%
	
	\section{Conclusion}
	\label{sec:conclusion}
	
	In this paper, we have addressed an excitation allocation problem for dynamic networks. Considering correlations
	between disturbances and non-parameterized modules to be present in a network model set, the goal is to select a minimal number of external excitation signals such that the model set becomes generically identifiable from measurement data. This provides conditions for the consistent identification of all parameterized modules in the model set.
	To this end, the notion of pseudotrees is introduced, and a novel necessary and sufficient graph-theoretic condition  has been provided based on disjoint pseudotrees to characterize the generic identifiability of a dynamic network model set. Based on this condition, an effective approach has been proposed, aiming to find a minimal number of excitation signals and their locations, where the number of the excitations is upper-bounded by the minimal number of disjoint pseudotrees that cover all the edges of the extended graph, and the locations of the excitations can be potentially selected as the roots of these pseudotrees.
	For future work, the identifiability problem in a dynamic network with partial measured and partial excited vertices is of interest. Specifically, it is worth investigating the research question how to place excitation signals in a network to achieve identifiability in the case that only partial measurements are available.
	
	%
	%
	
	
	%
	
	\bibliographystyle{IEEEtran}
	\bibliography{netid}

\begin{thebibliography}{10}
\providecommand{\url}[1]{#1}
\csname url@samestyle\endcsname
\providecommand{\newblock}{\relax}
\providecommand{\bibinfo}[2]{#2}
\providecommand{\BIBentrySTDinterwordspacing}{\spaceskip=0pt\relax}
\providecommand{\BIBentryALTinterwordstretchfactor}{4}
\providecommand{\BIBentryALTinterwordspacing}{\spaceskip=\fontdimen2\font plus
\BIBentryALTinterwordstretchfactor\fontdimen3\font minus
  \fontdimen4\font\relax}
\providecommand{\BIBforeignlanguage}[2]{{%
\expandafter\ifx\csname l@#1\endcsname\relax
\typeout{** WARNING: IEEEtran.bst: No hyphenation pattern has been}%
\typeout{** loaded for the language `#1'. Using the pattern for}%
\typeout{** the default language instead.}%
\else
\language=\csname l@#1\endcsname
\fi
#2}}
\providecommand{\BIBdecl}{\relax}
\BIBdecl

\bibitem{mesbahi2010graph}
M.~Mesbahi and M.~Egerstedt, \emph{Graph Theoretic Methods in Multiagent
  Networks}.\hskip 1em plus 0.5em minus 0.4em\relax Princeton University Press,
  2010.

\bibitem{ljung1999system}
L.~Ljung, \emph{System Identification: Theory for the User}, ser. Prentice Hall
  information and system sciences series.\hskip 1em plus 0.5em minus
  0.4em\relax Prentice Hall PTR, 1999.

\bibitem{chetty2015network}
V.~Chetty and S.~Warnick, ``Network semantics of dynamical systems,'' in
  \emph{Proc. 54th IEEE Conference on Decision and Control}.\hskip 1em plus
  0.5em minus 0.4em\relax IEEE, 2015, pp. 1557--1562.

\bibitem{chetty2017necessary}
------, ``Necessary and sufficient conditions for identifiability of
  interconnected subsystems,'' in \emph{Proc. 56th Annual Conference on
  Decision and Control}.\hskip 1em plus 0.5em minus 0.4em\relax IEEE, 2017, pp.
  5790--5795.

\bibitem{suzuki2013node}
M.~Suzuki, N.~Takatsuki, J.-i. Imura, and K.~Aihara, ``Node knock-out based
  structure identification in networks of identical multi-dimensional
  subsystems,'' in \emph{Proc. 2013 European Control Conference}.\hskip 1em
  plus 0.5em minus 0.4em\relax IEEE, 2013, pp. 2280--2285.

\bibitem{haber2014subspace}
A.~Haber and M.~Verhaegen, ``Subspace identification of large-scale
  interconnected systems,'' \emph{IEEE Transactions on Automatic Control},
  vol.~59, no.~10, pp. 2754--2759, 2014.

\bibitem{henk2019systemtopology}
H.~J. van Waarde, P.~Tesi, and M.~K. Camlibel, ``Topology identification of
  heterogeneous networks of linear systems,'' in \emph{Proc. 58th Conference on
  Decision and Control}.\hskip 1em plus 0.5em minus 0.4em\relax IEEE, 2019, pp.
  5513--5518.

\bibitem{Yu2018Subspace}
C.~{Yu}, M.~{Verhaegen}, and A.~{Hansson}, ``Subspace identification of local
  systems in one-dimensional homogeneous networks,'' \emph{IEEE Transactions on
  Automatic Control}, vol.~63, no.~4, pp. 1126--1131, April 2018.

\bibitem{paul2013netid}
P.~M.~J. Van~den Hof, A.~G. Dankers, P.~S.~C. Heuberger, and X.~Bombois,
  ``Identification of dynamic models in complex networks with prediction error
  methods -- basic methods for consistent module estimates,''
  \emph{Automatica}, vol.~49, no.~10, pp. 2994--3006, 2013.

\bibitem{materassi2010topology}
D.~Materassi and G.~Innocenti, ``Topological identification in networks of
  dynamical systems,'' \emph{IEEE Transactions on Automatic Control}, vol.~55,
  no.~8, pp. 1860--1871, 2010.

\bibitem{materassi2012problem}
D.~Materassi and M.~V. Salapaka, ``On the problem of reconstructing an unknown
  topology via locality properties of the wiener filter,'' \emph{IEEE
  transactions on automatic control}, vol.~57, no.~7, pp. 1765--1777, 2012.

\bibitem{hayden2016sparse}
D.~Hayden, Y.~H. Chang, J.~Gon{\c{c}}alves, and C.~J. Tomlin, ``Sparse network
  identifiability via compressed sensing,'' \emph{Automatica}, vol.~68, pp.
  9--17, 2016.

\bibitem{sanandaji2011acc}
B.~M. Sanandaji, T.~L. Vincent, and M.~B. Wakin, ``Exact topology
  identification of large-scale interconnected dynamical systems from
  compressive observations,'' in \emph{Proc. 2011 American Control
  Conference}.\hskip 1em plus 0.5em minus 0.4em\relax IEEE, 2011, pp. 649--656.

\bibitem{chiuso2012bayesian}
A.~Chiuso and G.~Pillonetto, ``A {Bayesian} approach to sparse dynamic network
  identification,'' \emph{Automatica}, vol.~48, no.~8, pp. 1553--1565, 2012.

\bibitem{shi2019bayesian}
S.~Shi, G.~Bottegal, and P.~M.~J. Van~den Hof, ``Bayesian topology
  identification of linear dynamic networks,'' in \emph{Proc. 2019 European
  Control Conference}, 2019, pp. 2814--2819.

\bibitem{Zorzi&Chiuso:17}
M.~Zorzi and A.~Chiuso, ``Sparse plus low rank network identification: a
  nonparametric approach,'' \emph{Automatica}, vol.~76, pp. 355--366, 2017.

\bibitem{dankers2016identification}
A.~G. Dankers, P.~M.~J. Van~den Hof, X.~Bombois, and P.~S.~C. Heuberger,
  ``Identification of dynamic models in complex networks with prediction error
  methods: Predictor input selection,'' \emph{IEEE Transactions on Automatic
  Control}, vol.~61, no.~4, pp. 937--952, 2016.

\bibitem{ramaswamy2018local}
K.~R. Ramaswamy, G.~Bottegal, and P.~M.~J. Van~den Hof, ``Local module
  identification in dynamic networks using regularized kernel-based methods,''
  in \emph{Proc. 57th IEEE Conference on Decision and Control}.\hskip 1em plus
  0.5em minus 0.4em\relax IEEE, 2018, pp. 4713--4718.

\bibitem{gevers2015identification}
M.~Gevers and A.~S. Bazanella, ``Identification in dynamic networks:
  identifiability and experiment design issues,'' in \emph{Proc. 54th IEEE
  Conference on Decision and Control}.\hskip 1em plus 0.5em minus 0.4em\relax
  IEEE, 2015, pp. 4005--4010.

\bibitem{Linder&Enqvist_ijc:17}
J.~Linder and M.~Enqvist, ``Identification of systems with unknown inputs using
  indirect input measurements,'' \emph{International Journal of Control},
  vol.~90, no.~4, pp. 729--745, 2017.

\bibitem{gevers2018practical}
M.~Gevers, A.~S. Bazanella, and G.~V. da~Silva, ``A practical method for the
  consistent identification of a module in a dynamical network,''
  \emph{IFAC-PapersOnLine}, vol.~51, no.~15, pp. 862--867, 2018.

\bibitem{Karthik2021TAC}
K.~R. {Ramaswamy} and P.~M.~J. {Van den Hof}, ``A local direct method for
  module identification in dynamic networks with correlated noise,'' \emph{IEEE
  Transactions on Automatic Control}, vol.~66, no.~11, 2021, to appear.

\bibitem{shi2020excitation}
S.~Shi, X.~Cheng, and P.~M.~J. Van~den Hof, ``Excitation allocation for generic
  identifiability of a single module in dynamic networks: A graphic approach,''
  in \emph{Proc. 21st IFAC World Congress}, 2020, in press.

\bibitem{shi2020single}
S.~Shi, X.~Cheng, and P.~M. Van~den Hof, ``Single module identifiability in
  linear dynamic networks with partial excitation and measurement,''
  \emph{arXiv preprint arXiv:2012.11414}, 2020.

\bibitem{shi2020generic}
------, ``Generic identifiability of subnetworks in a linear dynamic network:
  the full measurement case,'' \emph{arXiv preprint arXiv:2008.01495}, 2020.

\bibitem{gonccalves2008LTInetworks}
J.~Gon{\c{c}}alves and S.~Warnick, ``Necessary and sufficient conditions for
  dynamical structure reconstruction of {LTI} networks,'' \emph{IEEE
  Transactions on Automatic Control}, vol.~53, no.~7, pp. 1670--1674, 2008.

\bibitem{adebayo2012dynamical}
J.~Adebayo, T.~Southwick, V.~Chetty, E.~Yeung, Y.~Yuan, J.~Gon{\c{c}}alves,
  J.~Grose, J.~Prince, G.-B. Stan, and S.~Warnick, ``Dynamical structure
  function identifiability conditions enabling signal structure
  reconstruction,'' in \emph{Proc. 51st IEEE Conference on Decision and
  Control}.\hskip 1em plus 0.5em minus 0.4em\relax IEEE, 2012, pp. 4635--4641.

\bibitem{Weerts&etal_IFAC:15}
H.~H.~M. Weerts, A.~G. Dankers, and P.~M.~J. Van~den Hof, ``Identifiability in
  dynamic network identification,'' \emph{IFAC-PapersOnLine}, vol. 48-28, pp.
  1409--1414, 2015, proc. 17th IFAC Symposium on System Identification,
  Beijing, China.

\bibitem{weerts2018identifiability}
H.~H.~M. Weerts, P.~M.~J. Van~den Hof, and A.~G. Dankers, ``Identifiability of
  linear dynamic networks,'' \emph{Automatica}, vol.~89, pp. 247--258, 2018.

\bibitem{henk2018identifiability}
H.~J. van Waarde, P.~Tesi, and M.~K. Camlibel, ``Necessary and sufficient
  topological conditions for identifiability of dynamical networks,''
  \emph{IEEE Transactions on Automatic Control}, vol.~65, no.~11, pp.
  4525--4537, 2020.

\bibitem{bazanella2017identifiability}
A.~S. Bazanella, M.~Gevers, J.~M. Hendrickx, and A.~Parraga, ``Identifiability
  of dynamical networks: which nodes need be measured?'' in \emph{Proc. 56th
  IEEE Conference on Decision and Control}.\hskip 1em plus 0.5em minus
  0.4em\relax IEEE, 2017, pp. 5870--5875.

\bibitem{hendrickx2018identifiability}
J.~M. {Hendrickx}, M.~{Gevers}, and A.~S. {Bazanella}, ``Identifiability of
  dynamical networks with partial node measurements,'' \emph{IEEE Transactions
  on Automatic Control}, vol.~64, no.~6, pp. 2240--2253, June 2019.

\bibitem{weerts2018single}
H.~H.~M. Weerts, P.~M.~J. Van~den Hof, and A.~G. Dankers, ``Single module
  identifiability in linear dynamic networks,'' in \emph{Proc. 57th IEEE
  Conference on Decision and Control}.\hskip 1em plus 0.5em minus 0.4em\relax
  IEEE, 2018, pp. 4725--4730.

\bibitem{Bazanella&etal_CDC:19}
A.~S. Bazanella, M.~Gevers, and J.~M. Hendrickx, ``Network identification with
  partial excitation and measurement,'' in \emph{Proc. 58th IEEE Conference on
  Decision and Control}, 2019, pp. 5500--5506.

\bibitem{cheng2019CDC}
X.~Cheng, S.~Shi, and P.~M.~J. Van~den Hof, ``Allocation of excitation signals
  for generic identifiability of dynamic networks,'' in \emph{Proc. 58th IEEE
  Conference on Decision and Control}.\hskip 1em plus 0.5em minus 0.4em\relax
  IEEE, 2019, pp. 5507--5512.

\bibitem{godsil2013algebraic}
C.~Godsil and G.~F. Royle, \emph{Algebraic Graph Theory}.\hskip 1em plus 0.5em
  minus 0.4em\relax Springer Science \& Business Media, 2013, vol. 207.

\bibitem{tchon1983generic}
K.~Tcho{\'n}, ``On generic properties of linear systems: An overview,''
  \emph{Kybernetika}, vol.~19, no.~6, pp. 467--474, 1983.

\bibitem{shi2020genericity}
\BIBentryALTinterwordspacing
S.~Shi, X.~Cheng, and P.~M.~J. Van~den Hof, ``On the genericity concept in
  identifiability of linear dynamic networks,'' Eindhoven University of
  Technology, Tech. Rep., 2020. [Online]. Available:
  \url{http://www.publications.pvandenhof.nl/Reportfiles/Shi-Genericity-2021.pdf}
\BIBentrySTDinterwordspacing

\bibitem{vanderWoude1991graph}
J.~W. Van~der Woude, ``A graph-theoretic characterization for the rank of the
  transfer matrix of a structured system,'' \emph{Mathematics of Control,
  Signals and Systems}, vol.~4, no.~1, pp. 33--40, 1991.

\bibitem{nikiel1989pseduotree}
J.~Nikiel, \emph{Topologies on Pseudo-trees and Applications}.\hskip 1em plus
  0.5em minus 0.4em\relax American Mathematical Society, 1989, vol. 416.

\bibitem{cvetkovic1987unicyclicgraph}
D.~Cvetkovi{\'c} and P.~Rowlinson, ``Spectra of unicyclic graphs,''
  \emph{Graphs and Combinatorics}, vol.~3, no.~1, pp. 7--23, 1987.

\end{thebibliography}

\end{document}